\undefined\usepackage{chngcntr}\fi
\newcommand{\mR}{\mathbb{R}}
\newcommand{\mE}{\mathbb{E}}
\newcommand{\mL}{\mathcal{L}}
\newcommand{\mF}{\mathcal{F}}
\def\K{\mathcal{K}}
\def\P{{\mathcal P}}
\def\L{{\mathcal L}}
\def\0{{\boldsymbol 0}}
\def\bb{{\boldsymbol{b}}}
\def\N{{\mathcal N}}
\def\F{{\mathcal F}}
\def\bv{{\boldsymbol{v}}}
\def\b1{{\boldsymbol{1}}}
\def\bx{{\boldsymbol{x}}}
\def\bu{{\boldsymbol{u}}}
\def\bz{{\boldsymbol{z}}}
\def\blambda{{\boldsymbol{\lambda}}}
\def \b0{{\boldsymbol{0}}}
\def\barg{{\bar{g}}}
\def\bnabla{{\bar{\nabla}}}
\def\barblambda{{\bar{\boldsymbol{\lambda}}}}
\def\barmu{{\bar{\mu}}}
\def\hatmu{{\hat{\mu}}}
\def\barK{{\bar{K}}}
\def\bargamma{{\bar{\gamma}}}
\def\alg{TR-StoSQP }
\newcommandx{\unsure}[2][1=]{\todo[linecolor=red,backgroundcolor=red!25,bordercolor=red,#1]{#2}}
\newcommandx{\change}[2][1=]{\todo[linecolor=blue,backgroundcolor=blue!25,bordercolor=blue,#1]{#2}}
\newcommandx{\info}[2][1=]{\todo[linecolor=OliveGreen,backgroundcolor=OliveGreen!25,bordercolor=OliveGreen,#1]{#2}}
\newcommandx{\improvement}[2][1=]{\todo[linecolor=Plum,backgroundcolor=Plum!25,bordercolor=Plum,#1]{#2}}
\begin{document}
\title{Fully Stochastic Trust-Region Sequential Quadratic Programming for Equality-Constrained Optimization Problems}
\author[1]{Yuchen Fang}
\author[2]{Sen Na}
\author[2,3]{Michael W. Mahoney}
\author[4]{Mladen Kolar}

\affil[1]{Committee on Computational and Applied Mathematics, The University of Chicago}
\affil[2]{ICSI and Department of Statistics, University of California, Berkeley}
\affil[3]{Lawrence Berkeley National Laboratory}
\affil[4]{Booth School of Business, The University of Chicago }

\date{}

\maketitle

\begin{abstract}
We propose a trust-region stochastic sequential quadratic programming algorithm~(TR-StoSQP) to solve nonlinear optimization problems with stochastic objectives and deterministic equality constraints. We consider a fully stochastic setting, where at each step a single sample is generated to estimate the objective gradient. The algorithm adaptively selects the trust-region radius and, compared to the existing line-search StoSQP schemes, allows us to utilize indefinite Hessian~matrices (i.e., Hessians without modification) in SQP subproblems. As a trust-region method for constrained optimization, our algorithm must address an infeasibility issue --- the linearized equality constraints and trust-region constraints may lead to infeasible SQP subproblems. In this regard, we propose an \textit{adaptive relaxation technique} to compute the trial step, consisting of a normal step and a tangential step. To control the lengths of these two steps while ensuring a scale-invariant property, we adaptively decompose the trust-region radius into two segments, based on the proportions of the rescaled feasibility and optimality residuals to the rescaled full KKT residual. The normal step~has~a~closed~form,~while the tangential step is obtained by solving a trust-region subproblem, to which a solution ensuring the Cauchy reduction is sufficient for our study. We establish a global almost sure convergence guarantee for TR-StoSQP, and illustrate its empirical performance on both a subset of problems in the CUTEst test set and constrained logistic regression problems using data from the LIBSVM collection.

\end{abstract}

\section{Introduction}\label{sec:1}

We consider the constrained stochastic optimization problem:
\begin{equation}\label{Intro_StoProb}
\min_{\bx\in\mR^d}\;f(\bx)=\mE[F(\bx;\xi)],\quad\text{s.t.}\;\; c(\bx)=\0,
\end{equation}
\noindent where $f:\mR^d\rightarrow \mR$ is a stochastic objective with $F(\cdot;\xi)$ being one of its realizations, $c:\mR^d\rightarrow\mR^m$ are deterministic equality constraints, $\xi$ is a random variable following the distribution $\P$, and the expectation $\mE[\cdot]$ is taken over the randomness of $\xi$. Problem \eqref{Intro_StoProb} appears in various applications including constrained deep neural networks \citep{Chen2018Constraint}, constrained maximum likelihood estimation \citep{Dupacova1988Asymptotic}, optimal control \citep{Birge1997State}, PDE-constrained optimization \citep{Rees2010Optimal}, and network optimization \citep{Bertsekas1998Network}.

There are numerous methods for solving constrained optimization problems with deterministic objectives. Among them, sequential quadratic programming (SQP)~methods are one of the leading approaches and are effective for both small and large problems. When the objective is stochastic, some stochastic SQP (StoSQP) methods have been proposed recently \citep{Berahas2021Sequential, Na2022adaptive, Berahas2021Stochastic, Na2021Inequality, Curtis2021Inexact, Berahas2022Accelerating}. That body of literature considers the following two different setups for modeling the objective.

The first setup is called the random model setup \citep{Chen2017Stochastic}, where  samples with adaptive batch sizes are generated in each iteration to estimate the objective model (e.g., objective value and gradient). The algorithms under this setup often require the estimated objective model to satisfy certain adaptive accuracy conditions with a fixed probability in each iteration. Under this setup, \cite{Na2022adaptive} proposed an StoSQP algorithm~for~\eqref{Intro_StoProb}, which adopts a stochastic line search procedure with an exact augmented Lagrangian merit function to select the stepsize. Subsequently, \cite{Na2021Inequality} further enhanced the designs and arguments in \cite{Na2022adaptive} and developed an active-set StoSQP method to enable~inequality constraints; and \cite{Berahas2022Accelerating} considered a finite-sum objective and accelerated StoSQP by applying the SVRG technique \citep{Johnson2013Accelerating}, which, however, requires one to periodically compute the full objective gradient. Also, \cite{Berahas2022Adaptive} introduced a norm test condition for StoSQP to adaptively select the batch sizes.

The second setup is called the fully stochastic setup \citep{Curtis2020fully}, where a single sample is generated in each iteration to estimate the objective model. Under this setup,~a~prespecified sequence is often required as an input to assist with the step selection. For example, \cite{Berahas2021Sequential} designed an StoSQP scheme that uses a random projection \mbox{procedure} to select the stepsize. The projection procedure uses a prespecified sequence $\{\beta_k\}$, together with the estimated Lipschitz constants of the objective gradient and constraint Jacobian, to construct a projection interval in each iteration. A random quantity is then computed and projected into the interval to decide the stepsize, which ensures a sufficient reduction on the $\ell_1$ merit function. Following from \cite{Berahas2021Sequential}, some algorithmic and theoretical improvements have been reported: \cite{Berahas2021Stochastic} dealt with rank-deficient Jacobians; \cite{Curtis2021Inexact} solved Newton systems inexactly; \cite{Curtis2021Worst} analyzed the worst-case sample complexity; and \cite{Na2022Asymptotic} established the local rate and performed statistical inference for the method in \cite{Berahas2021Sequential}.

The existing StoSQP algorithms converge globally either in expectation or almost surely, and enjoy promising empirical performance under favorable settings. However, there are three limitations that motivate our study.
First, the algorithms are all line-search-based; that is, a search direction is first computed by solving an SQP subproblem, and then a stepsize is selected, either by random projection or by stochastic line search along the direction. However, it is observed that for deterministic problems, computing the search direction and selecting the stepsize jointly, as is done in trust-region methods, can lead to better performance in some cases \cite[Chapter 4]{Nocedal2006Numerical}. 
Second, to make SQP subproblems solvable, the existing schemes require the approximation of the Lagrangian Hessian to be positive definite in the null space of constraint Jacobian. Such a condition is common in the SQP literature \citep{Boggs1995Sequential, Nocedal2006Numerical}, while it is often achieved by Hessian modification, which excludes promising choices of the Hessian matrices, such as the unperturbed (stochastic) Hessian of the Lagrangian. 
Third, to show~global~convergence, the existing literature requires the random merit parameter to be not only stabilized, but also sufficiently large (or small, depending on~the~context) with an unknown threshold. To achieve the latter goal, \cite{Na2022adaptive, Na2021Inequality} imposed an adaptive condition on the feasibility error when selecting the merit parameter, while \citet{Berahas2021Stochastic,Berahas2021Sequential, Berahas2022Accelerating,Curtis2021Inexact} imposed a symmetry condition on the noise distribution.~In~contrast, deterministic SQP schemes~only require the stability of the merit parameter (see \cite{Boggs1995Sequential} and references therein).

In this paper, we consider the fully stochastic setup and design a trust-region stochastic SQP (TR-StoSQP) method to address the above limitations. As a trust-region method, TR-StoSQP computes the search direction and stepsize jointly, and, unlike line-search-based methods, it avoids Hessian modifications in formulating SQP subproblems. Thus, it can explore negative curvature directions of the Hessian. Further, our analysis only relies on the stability of the merit parameter (of the $\ell_2$ merit function), which is consistent with deterministic SQP schemes. The design of TR-StoSQP is inspired by a stochastic trust-region method for solving \textit{unconstrained} problems reported in \cite{Curtis2020fully}, which improves the authors' prior design in \cite{Curtis2019Stochastic} from using linear model to quadratic model to approximate the objective function. As in \cite{Curtis2020fully}, our method inputs a user-specified  radius-related sequence $\{\beta_k\}$ to generate the trust-region radius at each step. Beyond this similarity, our scheme differs from \cite{Curtis2020fully} in several aspects.

First, it is known that trust-region methods for constrained optimization are bothered by the \textit{infeasibility issue} --- the linearized constraints and trust-region constraints may have an empty intersection, leading to an infeasible SQP subproblem. While some literature on trust-region SQP has been proposed to address this issue \citep{Celis1984trust, Vardi1985Trust, Byrd1987Trust, Omojokun1989Trust}, we develop a novel \textit{adaptive relaxation technique} to compute the trial step, which~preserves a scale-invariant property and can be further adapted to our stochastic setup. In particular, we decompose the trial step into a normal step and a tangential step.~Then, we control the lengths of the two steps by decomposing the trust-region radius into~two segments \textit{adaptively}, based on the proportions of the rescaled estimated feasibility and optimality residuals to the rescaled full KKT residual. Compared to the existing relaxation techniques, \textit{ our relaxation technique does not require any tuning parameters}.~See Section \ref{sec:2} for details.

Second, in TR-StoSQP, we properly compute some control parameters using~known or estimable quantities. By the computation, we no longer need to tune the other~two input parameter sequences as in \cite{Curtis2020fully} (i.e., $\{\gamma_{1,k}, \gamma_{2,k}\}$ in their notation), except to~tune the input radius-related sequence $\{\beta_k\}$. Further, we use the control parameters to adjust the input sequence $\{\beta_k\}$ when computing the trust-region radius, so that $\{\beta_k\}\subseteq (0,\beta_{\max}]$ with any $\beta_{\max}>0$ is sufficient for our convergence analysis. Our design simplifies the one in \cite{Curtis2020fully}, where there are three parameter sequences to tune whose conditions are highly coupled \citep[see][Lemma 4.5]{Curtis2020fully}. In addition, as the authors stated, \cite{Curtis2020fully} rescaled the Hessian matrix based on the input $\{\gamma_{1,k}\}$, which is not ideal~(because the rescaling step modifies the curvature information of the Hessian). We have removed this step in our design.

To our knowledge, TR-StoSQP is the first trust-region SQP algorithm for solving constrained optimization problems under fully stochastic setup. With a \mbox{stabilized}~merit parameter, we establish the global convergence property of TR-StoSQP. In particular, we show that (i) when $\beta_k=\beta$, $\forall k\geq 0$, the expectation of weighted \mbox{averaged}~KKT~residuals converges to a neighborhood around zero; (ii) when $\beta_k$ decays properly such that $\sum \beta_k = \infty$ and $\sum \beta_k^2<\infty$, the KKT residuals converge to zero almost surely. These results are similar to the ones for unconstrained and constrained problems established under fully stochastic setup in \cite{Berahas2021Sequential, Berahas2021Stochastic, Curtis2021Inexact, Curtis2020fully}. However, we have weaker conditions on the objective gradient noise (e.g., we consider a growth condition) and on the sequence $\beta_k$ (e.g., we only require $\beta_k\leq \beta_{\max}$). See the discussions after Theorem \ref{cor:cons_beta} and Theorem \ref{thm:Ful_bddvar_limit} for more details. We also note that a recent paper \citep{Sun2023trust} studied a noisy trust-region method for \textit{unconstrained deterministic} optimization. In that method, the value and gradient of the objective are evaluated with bounded deterministic noise. The authors showed that the trust-region iterates visit a neighborhood of the stationarity infinitely often, with the radius proportional to the noise magnitude.~Given the significant differences between stochastic and deterministic problems, and between constrained and unconstrained problems, our~algorithm design and analysis are quite different from \cite{Sun2023trust}.  That said, when studying the stability of the merit parameter, we follow existing literature \citep[e.g.,][]{Berahas2021Sequential, Berahas2021Stochastic, Na2022adaptive} and also require the bounded gradient noise condition.~We implement TR-StoSQP on a subset of problems in the CUTEst test set and on constrained logistic regression problems using data from the LIBSVM collection.~Numerical results demonstrate the promising performance of our method.

\vskip4pt
\noindent\textbf{Notation.} We use $\|\cdot\|$ to denote the $\ell_2$ norm for vectors and the operator norm~for matrices. $I$ denotes the identity matrix and $\b0$ denotes the zero matrix (or vector). Their dimensions are clear from the context. We let $G(\bx) = \nabla^T c(\bx)\in\mR^{m\times d}$ be the Jacobian matrix of the constraints and $P(\bx)=I-G^T(\bx)[G(\bx)G^T(\bx)]^{-1}G(\bx)$ be the projection matrix to the null space of $G(\bx)$. We use $\barg(\bx) = \nabla F(\bx;\xi)$ to denote an estimate of $\nabla f(\bx)$, and use $\bar{(\cdot)}$ to denote stochastic quantities.

\vskip4pt
\noindent\textbf{Structure of the paper.}
We introduce the adaptive relaxation technique in \mbox{Section}~\ref{sec:2}. We propose the trust-region stochastic SQP (TR-StoSQP) algorithm in Section \ref{sec:3} and establish its global convergence guarantee in Section \ref{sec:4}. Numerical experiments~are~presented in Section \ref{sec:5} and conclusions are presented in Section \ref{sec:6}. Some additional~analyses are provided in Appendix \ref{appendix:A}.

\section{Adaptive Relaxation for Deterministic Setup}\label{sec:2}

Lagrangian of Problem \eqref{Intro_StoProb} is $\L(\bx,\blambda)=f(\bx)+\blambda^Tc(\bx)$, where $\blambda\in\mR^m$ is the dual vector. Finding a first-order stationary point of \eqref{Intro_StoProb} is equivalent to finding a pair $(\bx^*,\blambda^*)$ such that
\begin{equation*}
\nabla \L(\bx^*,\blambda^*)=\begin{pmatrix}
\nabla_\bx \L(\bx^*,\blambda^*)\\
\nabla_{\blambda} \L(\bx^*,\blambda^*)
\end{pmatrix}=
\begin{pmatrix}
\nabla f(\bx^*)+G^T(\bx^*)\blambda^*\\
c(\bx^*)
\end{pmatrix}=\begin{pmatrix}
\b0\\
\b0
\end{pmatrix}.
\end{equation*}
\noindent We call $\|\nabla_\bx \L(\bx,\blambda)\|$ the optimality residual, $\|\nabla_\blambda \L(\bx,\blambda)\|$ (i.e., $\|c(\bx)\|$) the feasibility residual, and $\|\nabla \L(\bx,\blambda)\|$ the KKT residual. Given $\bx_k$ in the $k$-th iteration, we denote $\nabla f_k=\nabla f(\bx_k)$, $c_k = c(\bx_k)$, $G_k = G(\bx_k)$, etc.

\subsection{Preliminaries}

Given the iterate $\bx_k$ and the trust-region radius $\Delta_k$ in the $k$-th iteration, we compute an approximation $B_k$ of the Lagrangian Hessian $\nabla^2_{\bx}\L_k$,~and aim to obtain the trial step $\Delta\bx_k$ by solving a trust-region SQP subproblem 
\begin{equation}\label{def:SQPsubproblem}
\min_{\Delta\bx\in\mR^d} \ \frac{1}{2}\Delta\bx^TB_k\Delta\bx + \nabla f_k^T\Delta\bx, \; \quad\text{s.t.}\;\;
c_k+G_k\Delta\bx=\b0,\;\|\Delta\bx\|\leq\Delta_k.
\end{equation}
\noindent However, if $\{\Delta\bx\in\mR^d:c_k+G_k\Delta\bx=\b0\}\cap\{\Delta\bx\in\mR^d:\|\Delta\bx\|\leq\Delta_k\}=\emptyset$, then \eqref{def:SQPsubproblem} does not have a feasible point. This \textit{infeasibility issue} happens when the radius $\Delta_k$ is too short. To resolve this issue, one should not enlarge $\Delta_k$, which~would~make~the~trust-region constraint useless and violate the spirit of the trust-region scheme. Instead,~one should relax the linearized constraint $c_k+G_k\Delta\bx = \0$.

Before introducing our \textit{adaptive relaxation technique}, we review some classical relaxation techniques. To start, \cite{Celis1984trust} relaxed the linearized constraint by $\|c_k+G_k\Delta\bx\|\leq\theta_k$ with $\theta_k = \|c_k+G_k\Delta\bx_{k}^{CP}\|$, where $\Delta\bx_{k}^{CP}$ is the Cauchy point (i.e., the best steepest descent step) of the following problem:
\begin{equation}\label{pro:Ceils}
\min_{\Delta\bx\in\mR^d}\ \|c_k + G_k\Delta\bx\|\;
\quad\text{s.t.}\quad \|\Delta\bx\|\leq\Delta_k.
\end{equation}
\noindent However, since after the relaxation one has to minimize a quadratic function over the intersection of two ellipsoids  $\|c_k+G_k\Delta\bx\|\leq\theta_k$ and $\|\Delta\bx\|\leq \Delta_k$, the resulting SQP subproblem tends to be expensive to solve. See \cite{Yuan1990subproblem} for some insights into the difficulty, and see \cite{Yuan1991dual, Zhang1992Computing} for the methods for positive definite $B_k$. Alternatively, \cite{Vardi1985Trust} relaxed the linearized constraint by $\gamma_kc_k+G_k\Delta\bx=\b0$, with $\gamma_k\in(0,1]$ chosen to make the trust-region constraint of \eqref{def:SQPsubproblem} inactive. However, \cite{Vardi1985Trust} only showed~the~\mbox{existence}~of~an~extremely small $\gamma_k$, and it did not provide a practical way to choose it. Subsequently, \cite{Byrd1987Trust} refined the relaxation technique of \cite{Vardi1985Trust} by a step decomposition. At the $k$-th step, \cite{Byrd1987Trust} decomposed the trial step $\Delta\bx_k$ into a normal step $\bw_k\in\text{im}(G_k^T)$ and a tangential step $\bt_k\in\text{ker}(G_k)$, denoted as $\Delta\bx_k=\bw_k+\bt_k$. By the constraint $\gamma_kc_k+ G_k\Delta\bx_k  =\0$, the normal step has a closed form as (suppose $G_k$ has full row rank)
\begin{equation}\label{eq:Sto_normal_step}
\bw_k \coloneqq \gamma_k \bv_k \coloneqq -\gamma_k \cdot G_k^T[G_kG_k^T]^{-1}c_k,
\end{equation}
\noindent and the tangential step is expressed as $\bt_k=Z_k\bu_k$ for a vector $\bu_k\in\mR^{d-m}$. Here, the columns of $Z_k\in\mR^{d\times(d-m)}$ form the bases of $\text{ker}(G_k)$. \cite{Byrd1987Trust} proposed to choose $\gamma_k$ such that $\theta\Delta_k \leq \|\bw_k\|\leq \Delta_k$ for a tuning parameter $\theta\in(0,1)$, and solve $\bu_k$ from
\begin{equation}\label{eq:Byrd_tangential}
\min_{\bu\in\mR^{d-m}} \; \frac{1}{2}\bu^TZ_k^TB_kZ_k\bu+(\nabla f_k+B_k\bw_k)^TZ_k\bu
\; \quad\text{s.t.}\;\; \|\bu\|^2\leq\Delta_k^2-\|\bw_k\|^2.
\end{equation}
Furthermore, \cite{Omojokun1989Trust} combined the techniques of \cite{Celis1984trust} and \cite{Byrd1987Trust}; it solved the normal step $\bw_k$ from Problem \eqref{pro:Ceils} by replacing the constraint $\|\Delta\bx\|\leq \Delta_k$ with $\|\Delta\bx\|\leq \theta\Delta_k$~for some $\theta\in(0,1)$; and it solved the tangential step $\bt_k=Z_k\bu_k$ from Problem \eqref{eq:Byrd_tangential}. We note that the solution of \eqref{pro:Ceils} is naturally a normal step (i.e., lies in $\text{im}(G_k^T)$), because any directions in $\text{ker}(G_k)$ do not change the objective in \eqref{pro:Ceils}.

Although the methods in \cite{Byrd1987Trust, Omojokun1989Trust} allow one to employ Cauchy points for \mbox{trust-region} subproblems, they lack guidance for selecting the user-specified parameter $\theta$, which controls the lengths of the normal and tangential steps. In fact, an inappropriate~parameter $\theta$ may make either step conservative and further affect the effectiveness of~the algorithm. As we show in \eqref{eq:constraint_violation} and \eqref{eq:Ful_Cauchy_2} later, the normal step relates to the reduction of the feasibility residual, while the tangential step relates to the reduction of the optimality residual. We hope the two steps scale properly so that the model reduction achieved by $\Delta\bx_k$ is large enough. To that end, we propose an adaptive relaxation~technique, which is \textit{parameter-free} in step decomposition compared to \cite{Vardi1985Trust, Byrd1987Trust, Omojokun1989Trust}.

\subsection{Our adaptive relaxation technique}\label{sec:AdapRelax}

We introduce our parameter-free~relaxation procedure. Same as \cite{Byrd1987Trust}, we relax the linearized constraint in \eqref{def:SQPsubproblem} by~$\gamma_kc_k + G_k\Delta\bx =\0$ with $\gamma_k$ defined later, and decompose the trial step by $\Delta\bx_k = \bw_k+ \bt_k$.~The normal step $\bw_k$ is given by \eqref{eq:Sto_normal_step}, and the tangential step is of the form $\bt_k=Z_k\bu_k$.

To control the lengths of the two steps while ensuring a scale-invariant property (cf. Remark \ref{rem:2}), let us define the rescaled optimality vector $\nabla_{\bx}\L^{RS}_k\coloneqq\nabla_{\bx}\L_k/\|B_k\|$, the feasibility vector $c^{RS}_k\coloneqq c_k/\|G_k\|$, and the KKT vector $\nabla\L^{RS}_k\coloneqq(\nabla_{\bx}\L^{RS}_k,c^{RS}_k)$. { (One alternative choice of the rescaled feasibility vector can be $\bv_k = G_k^T[G_kG_k^T]^{-1}c_k$.)} Then, we \textit{adaptively} decompose the trust-region radius $\Delta_k$ into two segments, based~on the proportions of the rescaled feasibility and optimality residuals to the rescaled full KKT residual. We let 
\begin{equation}\label{eq:breve and tilde_delta_k}
\breve{\Delta}_k=\frac{\| c_k^{RS}\|}{\|\nabla\L_k^{ RS}\|}\cdot\Delta_k\quad\quad \text{ and  }\quad\quad \tilde{\Delta}_k=\frac{\|\nabla_{\bx}\L_k^{ RS}\|}{\|\nabla\L_k^{ RS}\|}\cdot\Delta_k.
\end{equation}
\noindent It is implicitly assumed that $\|B_k\|,\|G_k\|, \|\nabla\L_k\|\neq 0$, which is quite reasonable~for~SQP methods. We let $\breve{\Delta}_k$ control the length of the normal step $\bw_k$ and $\tilde{\Delta}_k$ control~the~length of the tangential step $\bt_k$. Specifically, we define $\gamma_k$ as (recall $\bv_k$ is defined in \eqref{eq:Sto_normal_step}) 
\begin{equation}\label{eq:Sto_gamma_k}
\gamma_k\coloneqq\min\{\breve{\Delta}_k/\|\bv_k\|,1 \}
\end{equation}
\noindent so that $\|\bw_k\|=\gamma_k\|\bv_k\|\leq \breve{\Delta}_k$, and we compute $\bu_k$ by solving 
\begin{equation}\label{eq:Sto_tangential_step}
\min_{\bu\in\mR^{d-m}}\ m(\bu)\coloneqq \frac{1}{2}\bu^TZ_k^TB_kZ_k\bu +  (\nabla f_k+B_k\bw_k)^TZ_k\bu \;\quad \text{ s.t. }\;\;\|\bu\|\leq\tilde{\Delta}_k.
\end{equation} 
\noindent When $\bv_k=0$ (i.e., $c_k = \0$), there is no need to choose $\gamma_k$ and we set $\Delta\bx_k=Z_k\bu_k$. Problem \eqref{eq:Sto_tangential_step} is a trust-region subproblem that appears in unconstrained~optimization. In our analysis, we only require a vector $\bu_k$ that reduces $m(\bu)$ by at least~as~much as the Cauchy point, which takes the direction of $-Z_k^T(\nabla f_k+B_k\bw_k)$ and minimizes $m(\bu)$ within the trust region \citep[Algorithm 4.2]{Nocedal2006Numerical}. Such a reduction requirement can~be achieved by various methods, including finding the exact solution or applying the dogleg or two-dimensional subspace minimization methods \citep{Nocedal2006Numerical}.

The following result provides a bound on the reduction in $m(\bu)$ that is different from the standard analysis of the Cauchy point; \citep[e.g.,][Lemma 4.3]{Nocedal2006Numerical}.

\begin{lemma}\label{lemma:Ful_cauchy}
Let $\bu_k$ be an approximate solution to \eqref{eq:Sto_tangential_step} that reduces the objective $m(\bu)$ by at least as much as the Cauchy point. For all $k\geq 0$, we have 
\begin{multline*}
 m(\bu_k)-m(\b0)= \frac{1}{2}\bu_k^TZ_k^TB_kZ_k\bu_k + (\nabla f_k+B_k\bw_k)^TZ_k\bu_k \\ \leq-\|Z_k^T(\nabla f_k+B_k\bw_k)\|\tilde{\Delta}_k+\frac{1}{2}\|B_k\|\tilde{\Delta}_k^2.
\end{multline*}
\end{lemma}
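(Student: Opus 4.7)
The plan is to exhibit one explicit feasible point on the boundary of the trust region along the steepest descent direction, evaluate $m$ there, and then chain two inequalities: (i) the Cauchy point does at least as well as any point along the steepest descent ray, and (ii) by hypothesis $\bu_k$ does at least as well as the Cauchy point. This turns the lemma into a short direct computation rather than requiring the case analysis that one usually sees in the textbook Cauchy bound.

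First I would dispose of the degenerate case $P_k\barg_k = \b0$: then the minimizer of $m$ along the ray $\{-\tau P_k\barg_k : \tau \geq 0\}$ is $\b0$ itself, so the Cauchy reduction is $0$, the hypothesis gives $m(\bu_k)-m(\b0)\le 0$, and the right-hand side of the claim is $\tfrac{1}{2}\|B_k\|\tilde{\Delta}_k^2 \ge 0$, making the inequality trivial.

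Otherwise, assuming $P_k\barg_k\neq \b0$, I would set the test vector $\tbu = -\tilde{\Delta}_k\, P_k\barg_k/\|P_k\barg_k\|$. Since $\|\tbu\|=\tilde{\Delta}_k$, this $\tbu$ is feasible for \eqref{eq:Sto_tangential_step}. Using the projection identities $P_k^2 = P_k$ and $P_k^T = P_k$, I would simplify $P_k\tbu = -\tilde{\Delta}_k P_k\barg_k/\|P_k\barg_k\|$, and compute
\begin{equation*}
\barg_k^T P_k \tbu \;=\; -\tilde{\Delta}_k\,\frac{\barg_k^T P_k \barg_k}{\|P_k\barg_k\|} \;=\; -\tilde{\Delta}_k\|P_k\barg_k\|,
\end{equation*}
since $\barg_k^T P_k \barg_k = \|P_k\barg_k\|^2$. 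For the quadratic term, I would use the operator-norm bound together with $\|P_k\tbu\| \le \|\tbu\| = \tilde{\Delta}_k$ to get $\tfrac{1}{2}\tbu^T P_k B_k P_k \tbu \le \tfrac{1}{2}\|B_k\|\tilde{\Delta}_k^2$. Adding the two pieces gives exactly the claimed bound for $m(\tbu)-m(\b0)$.

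To finish, I would observe that the Cauchy point is by definition the minimizer of $m$ along the ray $\{-\tau P_k \barg_k : \tau \ge 0\}$ intersected with the trust region, and $\tbu$ lies on this ray, so $m(\text{Cauchy point}) \le m(\tbu)$; combining with the assumption that $\bu_k$ reduces $m$ at least as much as the Cauchy point gives the desired inequality. I do not anticipate a real obstacle here; the one subtlety worth flagging is that this bound is deliberately \emph{looser} than the standard $-\tfrac{1}{2}\|P_k\barg_k\|\min(\tilde{\Delta}_k, \|P_k\barg_k\|/\|B_k\|)$ form (Lemma 4.3 of Nocedal--Wright), because we never optimize over the step length along the ray but simply evaluate at the trust-region boundary. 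This is the right form for the subsequent analysis, where the quadratic term $\tfrac{1}{2}\|B_k\|\tilde{\Delta}_k^2$ is absorbed into the model-reduction argument via the choice of $\Delta_k$.
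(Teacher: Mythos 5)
Your proposal is correct, and it takes a cleaner route than the paper. The paper follows the textbook template: it plugs in the explicit formula (4.12) of Nocedal--Wright for the Cauchy point, splits into the two cases (interior minimizer along the ray versus boundary), bounds each case separately, and then needs an extra completing-the-square step to show that the interior-case bound $-\tfrac{1}{2}\|P_k\barg_k\|^2/\|B_k\|$ is also dominated by the claimed right-hand side. Your argument short-circuits all of this: since the Cauchy point is \emph{by definition} the minimizer of $m$ along the ray $\{-\tau P_k\barg_k:\tau\ge 0\}$ within the trust region, it suffices to evaluate $m$ at the single feasible boundary point $\tbu=-\tilde{\Delta}_k P_k\barg_k/\|P_k\barg_k\|$ (which is exactly the paper's Case~2 computation) and chain $m(\bu_k)\le m(\bu_k^{CP})\le m(\tbu)$. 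This avoids both the case analysis and the final merging inequality, and it sidesteps any division by $\barg_k^TP_kB_kP_k\barg_k$ (which could be nonpositive for indefinite $B_k$). Your handling of the degenerate case $P_k\barg_k=\b0$ is also fine --- and in fact in that case $\tilde{\Delta}_k=0$ by \eqref{eq:breve and tilde_delta_k}, so both sides vanish. What the paper's longer computation buys is only the explicit intermediate bound $-\tfrac{1}{2}\|P_k\barg_k\|^2/\|B_k\|$ in the interior case, which is never used downstream; the lemma as stated needs nothing beyond what you prove.
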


\begin{proof}
Let $\bu_k^{CP}$ denote the Cauchy point. Since $m(\bu_k) \leq m(\bu_k^{CP})$, it suffices to analyze the reduction achieved by $\bu_k^{CP}$. By the formula of $\bu_k^{CP}$ in \cite[(4.12)]{Nocedal2006Numerical}, we~know that if $\|Z_k^T(\nabla f_k+B_k\bw_k)\|^3\leq\tilde{\Delta}_k(\nabla f_k+B_k\bw_k)^TZ_kZ_k^TB_kZ_kZ_k^T(\nabla f_k+B_k\bw_k)$, then $\bu_k^{CP}=-\|Z_k^T(\nabla f_k+B_k\bw_k)\|^2/(\nabla f_k+B_k\bw_k)^TZ_kZ_k^TB_kZ_kZ_k^T(\nabla f_k+B_k\bw_k)\cdot Z_k^T(\nabla f_k+B_k\bw_k)$. In this case, using $\|Z_k\|\leq 1$, we have
\begin{multline*}
m(\bu_k^{CP})-m(\b0)=\frac{1}{2}(Z_k\bu_k^{CP})^TB_kZ_k\bu_k^{CP} +  (\nabla f_k+B_k\bw_k)^TZ_k\bu_k^{CP}\\
=-\frac{1}{2}\frac{\|Z_k^T(\nabla f_k+B_k\bw_k)\|^4}{(\nabla f_k+B_k\bw_k)^TZ_kZ_k^TB_kZ_kZ_k^T(\nabla f_k+B_k\bw_k)}\leq-\frac{1}{2}\frac{\|Z_k^T(\nabla f_k+B_k\bw_k)\|^2}{\|B_k\|}.
\end{multline*}
\noindent Otherwise, $\bu_k^{CP} = -\tilde{\Delta}_k/\|Z_k^T(\nabla f_k+B_k\bw_k)\|\cdot Z_k^T(\nabla f_k+B_k\bw_k)$. In this case, we have
\begin{align*}
m(\bu_k^{CP})&-m(\b0) =\frac{1}{2}(Z_k\bu_k^{CP})^{T}B_kZ_k\bu_k^{CP} + (\nabla f_k+B_k\bw_k)^{T}Z_k\bu_k^{CP}\\
& =\frac{(\nabla f_k+B_k\bw_k)^TZ_kZ_k^TB_kZ_kZ_k^T(\nabla f_k+B_k\bw_k)}{2\|Z_k^T(\nabla f_k+B_k\bw_k)\|^2}\tilde{\Delta}_k^2-\|Z_k^T(\nabla f_k+B_k\bw_k)\|\tilde{\Delta}_k\\
& \leq \frac{1}{2}\|B_k\|\tilde{\Delta}_k^2 -\|Z_k^T(\nabla f_k+B_k\bw_k)\|\tilde{\Delta}_k.
\end{align*}
\noindent Combining the above two cases, we have
\begin{equation*}
m(\bu_k^{CP})-m(\b0) \leq -\min\left\{-\frac{\|B_k\|\tilde{\Delta}_k^2}{2} + \|Z_k^T(\nabla f_k+B_k\bw_k)\|\tilde{\Delta}_k,\;\frac{\|Z_k^T(\nabla f_k+B_k\bw_k)\|^2}{2\|B_k\|}\right\}.
\end{equation*}
\noindent Using the fact that
\begin{multline*}
-\frac{1}{2}\|B_k\|\tilde{\Delta}_k^2 + \|Z_k^T(\nabla f_k+B_k\bw_k)\|\tilde{\Delta}_k\\ = -\frac{\|B_k\|}{2}\rbr{\tilde{\Delta}_k - \frac{\|Z_k^T(\nabla f_k+B_k\bw_k)\|}{\|B_k\|}}^2 + \frac{\|Z_k^T(\nabla f_k+B_k\bw_k)\|^2}{2\|B_k\|} \leq \frac{\|Z_k^T(\nabla f_k+B_k\bw_k)\|^2}{2\|B_k\|},
\end{multline*}
\noindent we complete the proof.
\end{proof}

It is easy to see that our relaxation technique indeed results in a trial step that~lies in the trust region. We have (noting that $\|Z_k\|\leq 1$)
\begin{equation*}
\|\Delta\bx_k\|^2=\|\bw_k\|^2+\|\bt_k\|^2=(\gamma_k\|\bv_k\|)^2+\|\bu_k\|^2\stackrel{\eqref{eq:Sto_gamma_k}, \eqref{eq:Sto_tangential_step}}{\leq}\breve{\Delta}_k^2+\tilde{\Delta}_k^2\stackrel{\eqref{eq:breve and tilde_delta_k}}{=}\Delta_k^2.
\end{equation*}
\noindent Recalling from \eqref{eq:Sto_normal_step} that $\bw_k=-\gamma_kG_k^T[G_kG_k^T]^{-1} c_k$, we know $c_k+G_k\bw_k = (1 -\gamma_k) c_k$. Thus, we have
\begin{equation}\label{eq:constraint_violation}
\|c_k+G_k\Delta\bx_k\|-\|c_k\|=\|c_k+G_k\bw_k\|-\|c_k\|=-\gamma_k\|c_k\|\leq 0,
\end{equation}
\noindent where the strict inequality holds as long as $c_k\neq \b0$. This inequality suggests that~the normal step $\bw_k$ helps to reduce the feasibility residual. Furthermore, when we define the least-squares Lagrangian multiplier as $\boldsymbol{\lambda}_k=-G_k^T[G_kG_k^T]\nabla f_k$, we have $P_k\nabla f_k=\nabla_{\bx}\L_k$. Noting that $Z_kZ_k^T=P_k$, $P_k^2=P_k$ and $Z_k^TZ_k=I$, we obtain 
\begin{align*}
\|Z_k^T(\nabla f_k+B_k\bw_k)\|^2 & =  (\nabla f_k+B_k\bw_k)^T Z_k Z_k^T(\nabla f_k+B_k\bw_k)\\
& =(\nabla f_k+B_k\bw_k)^T P_k^2(\nabla f_k+B_k\bw_k)=\|\nabla_{\bx}\L_k+P_kB_k\bw_k\|^2.
\end{align*}
\hskip-3pt Thus, the conclusion of Lemma~\ref{lemma:Ful_cauchy} can be rewritten as
\begin{equation}\label{eq:Ful_Cauchy_2}
m(\bu_k)-m(\b0)\leq -\|\nabla_{\bx}\L_k + P_kB_k\bw_k\|\tilde{\Delta}_k+\frac{1}{2}\|B_k\|\tilde{\Delta}_k^2,
\end{equation}
\noindent indicating that the tangential step relates to the reduction of the optimality residual.

To end this section, we would like to link our relaxation~technique with those~in \cite{Byrd1987Trust, Omojokun1989Trust} in Remark \ref{rem:2}. 

\begin{remark}\label{rem:2}
In our method, we define \textit{rescaled} residuals $\|\nabla_{\bx}\L^{RS}_k\|$, $\|c^{RS}_k\|$,~$\|\nabla\L^{RS}_k\|$, and adaptively decompose the radius based on the proportions of these rescaled residuals (cf. \eqref{eq:breve and tilde_delta_k}). We have two motivations: (i) the relation of the normal and tangential steps to the feasibility and optimality residuals; (ii) a scale-invariant property. We~explain as follows.

Seeing from \eqref{eq:constraint_violation} and \eqref{eq:Ful_Cauchy_2}, the normal step relates to the reduction of the feasibility residual, while the tangential step relates to the reduction of the \mbox{optimality}~residual. When the proportion of the feasibility residual is larger than that of the optimality residual, decreasing the feasibility residual is more important. As a result, we assign~a larger trust-region radius to the normal step to achieve a larger reduction~in~the feasibility residual. Otherwise, we assign a larger radius to the tangential step to achieve a larger reduction in the optimality residual. In comparison, \cite{Byrd1987Trust, Omojokun1989Trust}~rely on a fixed~proportion constant $\theta\in(0,1)$, making their approach less adaptive than ours.

 On the other hand, we note that \cite{Byrd1987Trust, Omojokun1989Trust} enjoy a nice scale-invariant property:~given the radius $\Delta_k$, the trial step $\Delta\bx_k$ is invariant when the constraints $c$  and/or the objective $f$ are rescaled by a (positive) scalar. Note that if $f$ (or $c$) is rescaled by~a~positive scalar, the Lagrangian Hessian (or the constraints Jacobian) will be rescaled by the same scalar. To preserve the invariance property, we decompose~$\Delta_k$~using the rescaled residuals, as opposed to the original residuals $\|\nabla_{\bx}\mL_k\|$ and $\|c_k\|$; the latter can never be scale-invariant.
\end{remark}
In the next section, we move to the fully stochastic setup and utilize the proposed relaxation scheme to design an StoSQP algorithm for \eqref{Intro_StoProb}. We will also discuss how to use the relaxation in \cite{Byrd1987Trust} to design a StoSQP method.

\section{A Trust-Region Stochastic SQP Algorithm}\label{sec:3}

From now on, we replace the deterministic gradient $\nabla f(\bx)$ by its stochastic estimate $\barg(\bx) = \nabla F(\bx; \xi)$. Similar to Section \ref{sec:2}, we denote $\barg_k=\barg(\bx_k)$ and define the \textit{estimated} KKT residual as $\|\bnabla\mL_k\|=\|(\bnabla_\bx\mL_k, c_k)\|$ with $\bnabla_\bx\mL_k = \barg_k + G_k^T\blambda_k$.

We summarize the proposed TR-StoSQP algorithm in Algorithm \ref{Alg:Non-adap}, and introduce the algorithm details as follows. In the $k$-th iteration, we are given the iterate~$\bx_k$, two fixed scalars $\zeta>0$ and $\delta\geq 0$, and the parameters $(\beta_k,L_{\nabla f,k}, L_{G,k}, \barmu_{k-1})$. Here, $\beta_k\in(0,\beta_{\max}]$ with upper bound $\beta_{\max}>0$ is the input radius-related parameter;~$L_{\nabla f,k}$ and $L_{G,k}$ are the (estimated) Lipschitz constants of $\nabla f(\bx)$ and $G(\bx)$ (in~practice, they can be estimated by standard procedures in \cite{Curtis2018Exploiting, Berahas2021Sequential}); and $\barmu_{k-1}$ is the merit parameter of the $\ell_2$ merit function obtained after the $(k-1)$-th iteration. With these parameters, we proceed with the following three steps. 

\vskip4pt

\noindent\textbf{Step 1: Compute control parameters.}
We compute a matrix $B_k$ to approximate the Hessian of the Lagrangian $\nabla^2_{\bx}\L_k$, and require it to be deterministic conditioning on $\bx_k$. With $\bv_k$ defined in \eqref{eq:Sto_normal_step}, we then compute several control parameters:
\begin{equation}\label{def:eta2k}
\begin{aligned}
\eta_{1,k} &  = \zeta\cdot \|\bv_k\|/\|c_k\|, \qquad \qquad\;\tau_k=L_{\nabla f,k}+L_{G,k}\barmu_{k-1}+\|B_k\|, \\
\alpha_k &  = \frac{\beta_k}{4(\eta_{1,k}\tau_k+\zeta)\beta_{\max}},\hskip0.5cm \eta_{2,k} = \eta_{1,k}-\frac{1}{2}\zeta\eta_{1,k}\alpha_k.
\end{aligned}
\end{equation}
\noindent We should emphasize that, compared to the existing line-search-based StoSQP methods \citep{Berahas2021Sequential, Berahas2021Stochastic, Berahas2022Accelerating, Na2022adaptive, Na2021Inequality, Na2022Asymptotic}, we do not require $B_k$ to be positive definite in the null space~$\text{ker}(G_k)$. This benefit adheres to the trust-region methods, more precisely, the existence of the trust-region constraint. Due to this benefit, we can construct different $B_k$ to formulate the StoSQP subproblems. In our experiments in Section \ref{sec:5}, we will construct $B_k$~by~the identity matrix, the symmetric rank-one (SR1) update, the estimated Hessian without modification, and the average of the estimated Hessians.

The control parameters in \eqref{def:eta2k} play a critical role in adjusting the input $\{\beta_k\}$~and generating the trust-region radius. Compared to \cite{Curtis2020fully}, $\{\eta_{1,k},\eta_{2,k}\}$ (i.e., $\{\gamma_{1,k}, \gamma_{2,k}\}$ in their notation) are no longer inputs and $B_k$ is not rescaled by the parameters.

\vskip4pt

\noindent\textbf{Step 2: Compute the trust-region radius.} We sample a realization $\xi_g^k$ and~compute an estimate $\barg_k=\nabla F(\bx_k;\xi_g^k)$ of $\nabla f_k$. We then compute the least-squares Lagrangian multiplier as  $\barblambda_k=-[G_kG_k^T]^{-1}G_k\barg_k$ and the KKT vector $\bar{\nabla}\L_k$. Furthermore, we define the trust-region radius as 
\begin{equation}\label{Ful_GenerateRadius}
\Delta_k=\left\{
\begin{aligned}
&\eta_{1,k}\alpha_k\|\bar{\nabla}\L_k\|  \quad \text{if }\|\bar{\nabla}\L_k\|\in(0,1/\eta_{1,k}), \\
&\alpha_k\qquad\qquad\quad\;\; \text{if }\|\bar{\nabla}\L_k\|\in[1/\eta_{1,k},1/\eta_{2,k}], \\
&\eta_{2,k}\alpha_k\|\bar{\nabla}\L_k\|\quad\text{if }\|\bar{\nabla}\L_k\|\in(1/\eta_{2,k},\infty).
\end{aligned}
\right.
\end{equation}
\noindent We provide the following remark to compare \eqref{Ful_GenerateRadius} with the line search scheme in \cite{Berahas2021Sequential}.

\begin{remark}\label{rem:1}
It is interesting to see that the scheme \eqref{Ful_GenerateRadius} enjoys the same flavor as the random-projection-based line search procedure in \cite{Berahas2021Sequential}. In particular, \cite{Berahas2021Sequential} updates $\bx_k$ by $\alpha_k\tilde{\Delta}\bx_k$ each step, where $\tilde{\Delta}\bx_k$ is solved from Problem \eqref{def:SQPsubproblem} (without trust-region constraint) and the stepsize $\alpha_k$ is selected by projecting a random~\mbox{quantity}~into an~interval like $[\beta_k, \beta_k+\beta_k^2]$ (see \eqref{def:proj} below). By the facts that $\|\tilde{\Delta}\bx_k\|=\mathcal{O}(\|\bar{\nabla}\L_k\|)$~(i.e., $\tilde{\Delta}\bx_k$ and $\bnabla\L_k$ have the same order of magnitude)~and~$\alpha_k = \mathcal{O}(\beta_k)$, we know $\|\bx_{k+1}-\bx_k\| =\|\alpha_k\tilde{\Delta}\bx_k\|= \mathcal{O}(\beta_k\|\bar{\nabla}\L_k\|)$. This order is preserved by our trust-region scheme since, seeing from \eqref{def:eta2k} and \eqref{Ful_GenerateRadius}, we have $\|\bx_{k+1}-\bx_k\| = \|\Delta\bx_k\| = \mathcal{O}(\beta_k\|\bar{\nabla}\L_k\|)$.~Furthermore, the projection in \cite{Berahas2021Sequential} brings some sort of adaptivity to the scheme as the stepsize $\alpha_k$ has a variability of $\mathcal{O}(\beta_k^2)$. This merit is also preserved by \eqref{Ful_GenerateRadius}, noting~that $(\eta_{1,k}-\eta_{2,k})\alpha_k = \mathcal{O}(\beta_k^2)$.

We emphasize that \eqref{Ful_GenerateRadius} offers adaptivity to selecting the radius $\Delta_k$ based on $\alpha_k(= \mathcal{O}(\beta_k))$. When $\|\bar{\nabla}\L_k\|$ is large, the iterate $\bx_k$ is likely to be far from the~KKT~point. Thus, we set $\Delta_k>\alpha_k$ to be more aggressive than $\alpha_k$. Otherwise, when $\|\bar{\nabla}\L_k\|$~is~small, the iterate $\bx_k$ is likely to be near the KKT point. Thus, we set $\Delta_k<\alpha_k$ to be more conservative than $\alpha_k$.

\end{remark}

\noindent\textbf{Step 3: Compute the trial step and update the merit parameter.} With $\Delta_k$ from Step 2, we adapt the relaxation technique in Section \ref{sec:AdapRelax} to compute the trial step $\Delta\bx_k = \bw_k+\bt_k$. In particular, we apply \eqref{eq:breve and tilde_delta_k} to decompose $\Delta_k$, with deterministic residuals replaced by their stochastic estimates. Then, we apply \eqref{eq:Sto_gamma_k} to compute the stochastic counterpart of $\gamma_k$, denoted as $\bargamma_k^{\text{trial}}$. Then, we set $\bargamma_k$ as
\begin{equation}\label{def:proj}
\bargamma_k\leftarrow \text{Proj}\left(\bargamma_k^{\text{trial}}\big|\left[0.5\zeta\phi_k\alpha_k,0.5\zeta\phi_k\alpha_k+\delta\alpha_k^2\right]\right),
\end{equation}
\noindent where $\phi_k=\min\{\|B_k\|/\|G_k\|,1\}$ and $\text{Proj}(a|[b,c])$ is the projection function. It equals $a$ if $a\in[b,c]$, $b$ if $a<b$, and $c$ if $a>c$. The normal step is $\bw_k=\bargamma_k\bv_k$, and the tangential step $\bt_k = Z_k\bu_k$ is solved from \eqref{eq:Sto_tangential_step},  achieving an reduction at~least~as~much~as Cauchy reduction. Finally, we update the iterate as $\bx_{k+1}=\bx_k+\Delta\bx_k$, and update~the merit parameter $\barmu_{k-1}$ of the $\ell_2$ merit function, defined as
\begin{equation}\label{equ:merit}
\L_{\barmu}(\bx) = f(\bx) + \barmu\|c(\bx)\|.
\end{equation}
\noindent Specifically, we let $\barmu_k = \barmu_{k-1}$ and compute the predicted reduction of $\mL_{\barmu_k}^k$ as
\begin{equation}\label{eq:Ful_Pred_k}
\text{Pred}_k=\barg_k^T\Delta\bx_k+\frac{1}{2}\Delta\bx_k^TB_k\Delta\bx_k+\barmu_k(\|c_k+G_k\Delta\bx_k\|-\|c_k\|).
\end{equation}
\noindent The parameter $\barmu_k$ is then iteratively updated as $\barmu_k \leftarrow \rho \barmu_k$ with some $\rho>1$ until
\begin{equation}\label{eq:upper_bound_predk}
\text{Pred}_k\leq  -\|\bar{\nabla}\L_k\|\Delta_k+\frac{1}{2}\|B_k\|\Delta_k^2.
\end{equation}
\noindent We now explain some components of Step 3 in the following remarks.

\begin{remark}
The update rule for the merit parameter in \eqref{eq:upper_bound_predk} is well-posed and~terminates in finite number of steps.~By \eqref{eq:constraint_violation}, $\barmu_k(\|c_k+G_k\Delta\bx_k\|-\|c_k\|)=-\bargamma_k\barmu_k\|c_k\|$.~Thus, when $\|c_k\|\neq 0$, $\text{Pred}_k$ decreases as $\barmu_k$ increases and \eqref{eq:upper_bound_predk} is satisfied for a sufficiently large $\barmu_k$. When $\|c_k\|= 0$, both $\bw_k$ and $\breve{\Delta}_k$ vanish, and $\text{Pred}_k = m(\bu_k)-m(\b0)$. Then, \eqref{eq:upper_bound_predk} is satisfied solely by the tangential step, without selecting the merit parameter,~as can be seen from \eqref{eq:Ful_Cauchy_2}.
The choice of the right-hand-side \mbox{threshold}~of~\eqref{eq:upper_bound_predk}~\mbox{ensures}~that the trial step achieves a sufficient reduction on the merit function \eqref{equ:merit}. In particular, it is known for SQP methods that the predicted reduction of the merit \mbox{function}~is~characterized by the directional derivative of the merit function along the trial step,~which is proportional to $-\|\bnabla\mL_k\|^2$ when the merit parameter $\barmu_k$ is selected~properly \citep[see][]{Berahas2021Sequential, Na2022adaptive}. This motivates the first term of the threshold.~\mbox{Further},~to~control the~quadratic term $\Delta\bx_k^TB_k\Delta\bx_k/2$ in \eqref{eq:Ful_Pred_k}, we offset the threshold by the second term $\|B_k\|\Delta_k^2/2$, which stems from the positive term in Cauchy reduction (see Lemma \ref{lemma:Ful_cauchy}). Overall, as shown in Lemma \ref{lemma:Ful_cond2}, the right-hand-side of \eqref{eq:upper_bound_predk} is always negative, meaning that the trial steps leads to a sufficient reduction. 

The iterative update $\barmu_k \leftarrow \rho \barmu_k$ is not essential since the threshold of $\barmu_k$ can be~obtained by directly solving \eqref{eq:upper_bound_predk}. Then, $\barmu_k$ can be updated by taking the maximum~between $\rho\barmu_k$ and the threshold. The maximum operation ensures that~$\barmu_k$~is~\mbox{increased}~by at least a fixed amount, $\rho\barmu_{-1}$,~\mbox{whenever}~it~is~\mbox{updated}.~This is important for the stability result of $\barmu_k$ (see Lemma \ref{lemma:predk}).

\end{remark}

\begin{algorithm}[t]
\caption{A Trust Region Stochastic SQP (TR-StoSQP) Algorithm}\label{Alg:Non-adap}
\begin{algorithmic}[1]
\State \textbf{Input:} Initial iterate $\bx_0$, radius-related sequence $\{\beta_k\}\subset(0,\beta_{\max}]$, parameters $\rho>1,\barmu_{-1},\zeta>0$, { $\delta\geq 0$}, (estimated) Lipschitz constants $\{L_{\nabla f,k}\},\{L_{G,k}\}$.
\For {$k=0,1,\cdots,$}
\State Compute an approximation $B_k$ and control parameters $\eta_{1,k},\tau_k,\alpha_k,\eta_{2,k}$ as \eqref{def:eta2k};
\State Sample $\xi_g^k$ and compute $\barg_k$, $\barblambda_k$, $\bar{\nabla}\L_k$, and the trust-region radius $\Delta_k$ as \eqref{Ful_GenerateRadius};
\State Decompose $\Delta_k$ as \eqref{eq:breve and tilde_delta_k} and compute $\bargamma_k^{\text{trial}}$ as \eqref{eq:Sto_gamma_k} and $\bargamma_k$ as \eqref{def:proj};
\State Compute $\Delta\bx_k = \bw_k +\bt_k$, where $\bw_k = \bargamma_k\bv_k$ and $\bt_k=Z_k\bu_k$ is from \eqref{eq:Sto_tangential_step};
\State Update $\bx_{k+1}=\bx_k+\Delta\bx_k$, set $\barmu_k=\barmu_{k-1}$, and compute $\text{Pred}_k$ as \eqref{eq:Ful_Pred_k};
\While{\eqref{eq:upper_bound_predk} does not hold}
\State Set $\barmu_k=\rho\barmu_k$;
\EndWhile
\EndFor
\end{algorithmic}
\end{algorithm}
\setlength{\textfloatsep}{0.3cm}

\begin{remark}
We utilize a projection step \eqref{def:proj} in the selection of $\bargamma_k$. The interval with a length of $\delta\alpha_k^2$ provides some sort of flexibility in the selection, similar~to~\cite{Berahas2021Stochastic,Berahas2021Sequential} and references therein.~The~motivation behind the projection is to~\mbox{regulate}~$\bargamma_k$~\mbox{using}~control parameters~\mbox{computed}~in~\eqref{def:eta2k}. To gain insight into the interval~\mbox{boundary},~we~consider a small $\alpha_k$. Combining \eqref{eq:breve and tilde_delta_k}, \eqref{eq:Sto_gamma_k}, and \eqref{Ful_GenerateRadius}, we obtain that $\bargamma_k^{\text{trial}} = \breve{\Delta}_k/\|\bv_k\| =\mathcal{O}(\Delta_k/\|\bnabla\mL_k^{RS}\|) =\mathcal{O}(\alpha_k)$.
As a result, the boundary should scale proportionally with $\alpha_k$. However, $\mathcal{O}(\cdot)$ hides~the~ratios between unscaled and scaled residuals, such as $\|\bnabla\mL_k\|/\|\bnabla\mL_k^{RS}\|$. The control~parameters are utilized to offer a deterministic lower bound for these ratios. In the end, we can show that (see \eqref{equ:4.13})
\begin{equation*}
\zeta\phi_k\alpha_k/2\leq \min\{\breve{\Delta}_k/\|\bv_k\|,1\} \eqqcolon \bargamma_k^{\text{trial}},
\end{equation*}
\noindent which implies $\bargamma_k\leq \bargamma_k^{\text{trial}}$ and, consequently, the normal step $\|\bw_k\| = \bargamma_k\|\bv_k\|\leq \breve{\Delta}_k$.
\end{remark}

\begin{remark}\label{rem:3}
In addition to our adaptive relaxation technique, we consider two~alternative relaxation approaches for designing StoSQP methods.~These approaches~only affect the computation of $\Delta\bx_k$, while the remaining parts of the algorithm remain the same.~Thus, these approaches enjoy the same global \mbox{convergence}~analysis. The proof of the stability result of the merit parameter $\barmu_k$ may differ slightly. In this regard,~the detailed analysis is provided in Appendix \ref{appendix:A} for the sake of completeness. We empirically investigate the performance of the following methods in Section \ref{sec:5}.

\vskip4pt

(i) We compute the same normal step $\bw_k$, but instead of using \eqref{eq:Sto_tangential_step} to \mbox{compute}~the tangential step $\bt_k$, we follow the approach \citep{Byrd1987Trust,Omojokun1989Trust} and use \eqref{eq:Byrd_tangential}. In other words, we~do not decompose $\Delta_k$ as in \eqref{eq:breve and tilde_delta_k}, but define $\tilde{\Delta}_k\coloneqq \sqrt{\Delta_k^2-\|\bw_k\|^2}$.

(ii) We follow the approach in \cite{Byrd1987Trust}. In particular, we decompose $\Delta_k$ as $\breve{\Delta}_k \coloneqq\theta\Delta_k$ and $\tilde{\Delta}_k\coloneqq \sqrt{\Delta_k^2-\|\bw_k\|^2}$ for a prespecified constant $\theta\in(0,1]$; and apply~Algorithm \ref{Alg:Non-adap} to derive the normal and tangential steps with $\phi_k$ in \eqref{def:proj} replaced by $\theta$.
\end{remark}

We end this section by introducing the randomness in TR-StoSQP. We let $\F_0\subseteq\F_1\subseteq\F_2\cdots$ be a filtration of $\sigma$-algebras with $\F_{k-1}$ generated by $\{\xi_g^j\}_{j=0}^{k-1}$; thus, $\F_{k-1}$ contains all the randomness before the $k$-th iteration. Let $\F_{-1}=\sigma(\bx_0)$ be the trivial $\sigma$-algebra for consistency. It is easy to see that for all $k\geq 0$, we have
\begin{equation*}
\sigma(\bx_k,\eta_{1,k},\tau_k,\alpha_k,\eta_{2,k})\subseteq \F_{k-1}\quad
\text{ and }\quad \sigma(\Delta\bx_k, \barblambda_k, \barmu_k)\subseteq \F_k.
\end{equation*}
\noindent In the next section, we conduct the global analysis of the proposed algorithm.

\section{Convergence Analysis}\label{sec:4}

We study the convergence of Algorithm \ref{Alg:Non-adap} by measuring the decrease of the $\ell_2$ merit function at each step, that is
\begin{equation*}
\L_{\barmu_k}^{k+1}-\L_{\barmu_k}^k=f_{k+1}-f_k+\barmu_k(\|c_{k+1}\|-\|c_k\|).
\end{equation*}
\noindent We use $\barmu_k$ to denote the merit parameter obtained after the While loop in Line 10 of Algorithm \ref{Alg:Non-adap}, so that $\barmu_k$ satisfies \eqref{eq:upper_bound_predk}. Following the analysis of \cite{Berahas2021Sequential, Berahas2021Stochastic, Berahas2022Accelerating, Curtis2021Inexact}, we will first assume $\barmu_k$ stabilizes (but not necessarily at a large enough value) after~a~few~iterations, and then we will validate the stability of $\barmu_k$ in Section \ref{sec:penalty}. 

We now state the assumptions for the analysis.

\begin{assumption}\label{ass:1-1}

Let $\Omega\subseteq\mR^d$ be an open convex set containing the iterates $\{\bx_k\}$. The function $f(\bx)$ is continuously differentiable and is bounded below by $f_{\inf}$ over $\Omega$. The gradient $\nabla f(\bx)$ is Lipschitz continuous over $\Omega$ with constant $L_{\nabla f}>0$, so that the (estimated) Lipschitz constant $L_{\nabla f, k}$ at $\bx_k$ satisfies $L_{\nabla f, k}\leq L_{\nabla f}$, $\forall k\geq0$. Similarly, the constraint $c(\bx)$ is continuously differentiable over $\Omega$; its Jacobian $G(\bx)$ is Lipschitz continuous over $\Omega$ with constant $L_G>0$; and $L_{G,k}\leq L_G$, $\forall k\geq 0$. We also assume there exist positive constants $\kappa_B,\kappa_c,\kappa_{\nabla f},\kappa_{1,G},\kappa_{2,G}>0$ such that 
\begin{equation*}
\|B_k\|\leq \kappa_B,\;\; \|c_k\|\leq \kappa_{c}, \;\; \|\nabla f_k\|\leq\kappa_{\nabla f},\;\;\; \kappa_{1,G}\cdot I\preceq G_kG_k^T\preceq\kappa_{2,G}\cdot I, \;\;\; \forall k\geq 0.
\end{equation*}
\end{assumption}

Assumption \ref{ass:1-1} is standard in the literature on both deterministic and stochastic SQP methods;  \citep[see, e.g.,][]{Byrd1987Trust,ElAlem1991Global,Powell1990trust, Berahas2021Sequential, Berahas2021Stochastic, Berahas2022Accelerating, Curtis2021Inexact}. In fact, when one uses a While loop~to~adaptively increase $L_{\nabla f,k}$ and $L_{G,k}$ to enforce the Lipschitz conditions (as did in \cite{Berahas2021Sequential, Curtis2018Exploiting}),~one has $L_{\nabla f,k}\leq L_{\nabla f}' \coloneqq \rho L_{\nabla f}$ for a factor $\rho>1$ \cite[same for $L_{G,k}$; see][Lemma 8]{Berahas2021Sequential}. We unify the Lipschitz constant and upper bound of $L_{\nabla f,k}$ as $L_{\nabla f}$ just for simplicity. In addition, the condition $\kappa_{1,G}\cdot I\preceq G_kG_k^T\preceq\kappa_{2,G}\cdot I$ implies $G_k$ has full row rank; thus, the least-squares dual iterate $\barblambda_k=-[G_kG_k^T]^{-1}G_k\barg_k$ is well defined.

Next, we assume the stability of $\barmu_k$. Compared to existing StoSQP literature~\citep{Berahas2021Sequential, Berahas2021Stochastic, Berahas2022Accelerating, Curtis2021Inexact}, we do not require the stabilized value to be large enough. We will revisit this assumption in Section \ref{sec:penalty}.

\begin{assumption}\label{ass:stabilized_penalty}
There exist an (possibly random) iteration threshold $\barK<\infty$~and a deterministic constant $\hat{\mu}>0$, such that for all $k>\barK$, $\barmu_k=\barmu_{\barK}\leq \hatmu$. 
\end{assumption}

Since $\barmu_k$ is non-decreasing in TR-StoSQP, we have $\barmu_k\leq\hatmu$, $\forall k\geq 0$. The global analysis only needs to study the convergence behavior of the algorithm after $k\geq\barK+1$ iterations. Next, we impose a condition on the gradient estimate.

\begin{assumption}\label{ass:Ful_unbias}
There exist constants $M_g\geq 1,M_{g,1}\geq 0$ such that the stochastic gradient estimate $\bar{g}_k$ satisfies $\mE_k[\bar{g}_k]=\nabla f_k$ and $\mE_k[\|\barg_k-\nabla f_k\|^2]\leq M_g+M_{g,1}(f_k-f_{\inf})$, $\forall k\geq0$, where $\mE_k[\cdot]$ denotes $\mE[\cdot\mid\mF_{k-1}]$.
\end{assumption}

We assume that the variance of the gradient estimate satisfies a growth condition. This condition is weaker than the usual bounded variance condition assumed in the StoSQP literature \citep{Curtis2020fully, Berahas2021Stochastic,Berahas2021Sequential,Na2021Inequality,Na2022adaptive}, which corresponds to $M_{g,1}=0$. The growth condition is more realistic and was recently investigated for stochastic first-order methods on unconstrained problems \citep{Stich2019Unified,Bottou2018Optimization,Vaswani2019Fast, Chen2020Convergence}, while is less explored for StoSQP methods.

\subsection{Fundamental lemmas}\label{subsec:fund_lemma}

The following result establishes the reduction of~the $\ell_2$ merit function achieved by the trial step.

\begin{lemma}\label{lemma:ared_k}
Suppose Assumptions \ref{ass:1-1} and \ref{ass:stabilized_penalty} hold. For all $k\geq\barK+1$, we have
\begin{equation}\label{eq:ared_k}
\L_{\barmu_{\barK}}^{k+1}-\L_{\barmu_{\barK}}^k \leq  -\|\bar{\nabla}\L_k\|\Delta_k+\frac{1}{2}\|B_k\| \Delta_k^2 
+\bargamma_k(\nabla f_k-\barg_k)^T\bv_k +\|P_k(\nabla f_k-\barg_k)\|\Delta_k+\frac{1}{2}\tau_k \Delta_k^2.
\end{equation}
\end{lemma}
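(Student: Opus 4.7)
The plan is to decompose the merit function change $\L_{\barmu_{\barK}}^{k+1}-\L_{\barmu_{\barK}}^k = (f_{k+1}-f_k) + \barmu_{\barK}(\|c_{k+1}\|-\|c_k\|)$ and bound each piece via Taylor expansion, and then absorb the linear term using the predicted reduction inequality \eqref{eq:upper_bound_predk}, which is exactly what the While loop in Algorithm \ref{Alg:Non-adap} enforces. The choice of the baseline index $k\geq\barK+1$ is important here: by Assumption \ref{ass:stabilized_penalty} we have $\barmu_k = \barmu_{k-1}=\barmu_{\barK}$ for such $k$, so the parameter $\tau_k = L_{\nabla f,k}+L_{G,k}\barmu_{k-1}+\|B_k\|$ is exactly the correct quadratic constant to appear in the final bound.

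First I would use Lipschitz continuity of $\nabla f$ (Assumption \ref{ass:1-1}) to get $f_{k+1}-f_k \leq \nabla f_k^T\Delta\bx_k + \tfrac{1}{2}L_{\nabla f,k}\|\Delta\bx_k\|^2$. Analogously, componentwise Taylor expansion of $c$ together with Lipschitz continuity of $G$ yields $\|c_{k+1}\| \leq \|c_k+G_k\Delta\bx_k\| + \tfrac{1}{2}L_{G,k}\|\Delta\bx_k\|^2$, and invoking \eqref{eq:constraint_violation} converts the first term into $\|c_k\|-\bargamma_k\|c_k\|$. Assembling these two inequalities with the weight $\barmu_{\barK}$ gives
\begin{equation*}
\L_{\barmu_{\barK}}^{k+1}-\L_{\barmu_{\barK}}^k \leq \nabla f_k^T\Delta\bx_k - \barmu_{\barK}\bargamma_k\|c_k\| + \tfrac{1}{2}(L_{\nabla f,k}+\barmu_{\barK}L_{G,k})\|\Delta\bx_k\|^2.
\end{equation*}

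Next I would split $\nabla f_k^T\Delta\bx_k = \barg_k^T\Delta\bx_k + (\nabla f_k-\barg_k)^T\Delta\bx_k$ and handle the noise term using the decomposition $\Delta\bx_k = \bargamma_k\bv_k + P_k\bu_k$: the normal component contributes $\bargamma_k(\nabla f_k-\barg_k)^T\bv_k$ exactly as in the target bound, and for the tangential component I would exploit $P_k^2=P_k=P_k^T$ to write $(\nabla f_k-\barg_k)^T P_k\bu_k \leq \|P_k(\nabla f_k-\barg_k)\|\cdot\|\bu_k\| \leq \|P_k(\nabla f_k-\barg_k)\|\tilde\Delta_k$, the last inequality coming from the trust-region constraint in \eqref{eq:Sto_tangential_step}. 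For the remaining deterministic term $\barg_k^T\Delta\bx_k - \barmu_{\barK}\bargamma_k\|c_k\|$, I would add and subtract $\tfrac{1}{2}\Delta\bx_k^T B_k\Delta\bx_k$ so that the difference is precisely $\text{Pred}_k - \tfrac{1}{2}\Delta\bx_k^T B_k\Delta\bx_k$ (using $\barmu_k=\barmu_{\barK}$), which is bounded above by \eqref{eq:upper_bound_predk} minus $\tfrac{1}{2}\Delta\bx_k^T B_k\Delta\bx_k$.

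Finally I would collect the three quadratic remainders $-\tfrac{1}{2}\Delta\bx_k^T B_k\Delta\bx_k$, $\tfrac{1}{2}L_{\nabla f,k}\|\Delta\bx_k\|^2$, and $\tfrac{1}{2}\barmu_{\barK}L_{G,k}\|\Delta\bx_k\|^2$. Since $-\Delta\bx_k^T B_k\Delta\bx_k \leq \|B_k\|\|\Delta\bx_k\|^2$ and $\|\Delta\bx_k\|\leq\Delta_k$ by construction of our relaxation, these combine into $\tfrac{1}{2}(\|B_k\|+L_{\nabla f,k}+\barmu_{\barK}L_{G,k})\Delta_k^2 = \tfrac{1}{2}\tau_k\Delta_k^2$, matching the final term in the statement. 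The main obstacle is really just careful bookkeeping: making sure to invoke the stabilization $\barmu_k=\barmu_{k-1}=\barmu_{\barK}$ at exactly the right step so that the application of \eqref{eq:upper_bound_predk} is legitimate and that the quadratic constant in the Lipschitz terms agrees with the $\tau_k$ appearing in Algorithm \ref{Alg:Non-adap}.
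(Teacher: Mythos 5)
Your proposal is correct and follows essentially the same route as the paper's proof: Lipschitz/Taylor bounds on $f$ and $c$, the decomposition $\Delta\bx_k=\bargamma_k\bv_k+P_k\bu_k$ to isolate the noise terms, the reduction condition \eqref{eq:upper_bound_predk} for the deterministic part, and absorption of $-\tfrac{1}{2}\Delta\bx_k^TB_k\Delta\bx_k$ together with the Lipschitz remainders into $\tfrac{1}{2}\tau_k\Delta_k^2$. The only cosmetic difference is that the paper bounds $\L_{\barmu_{\barK}}^{k+1}-\L_{\barmu_{\barK}}^k-\text{Pred}_k$ directly rather than expanding and reassembling $\text{Pred}_k$.
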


\begin{proof}
By the definitions of $\L_{\barmu_{\barK}}(\bx)$ and $\text{Pred}_k$ in \eqref{equ:merit} and \eqref{eq:Ful_Pred_k}, we have
\begin{equation*}
\L_{\barmu_{\barK}}^{k+1}-\L_{\barmu_{\barK}}^k-\text{Pred}_k = f_{k+1}-f_k-\barg_k^T\Delta\bx_k-\frac{1}{2}\Delta\bx_k^TB_k\Delta\bx_k+\barmu_{\barK}(\|c_{k+1}\|-\|c_k+G_k\Delta\bx_k\|).
\end{equation*}
\noindent By the Lipschitz continuity of $\nabla f(\bx)$ and $G(\bx)$, we further have
\begin{align*}
\L_{\barmu_{\barK}}^{k+1} & -\L_{\barmu_{\barK}}^k-\text{Pred}_k\leq (\nabla f_k-\barg_k)^T\Delta\bx_k+ \frac{1}{2}(L_{\nabla f,k}+\|B_k\|+L_{G,k}\barmu_{\barK})\|\Delta\bx_k\|^2\\
& \stackrel{\mathclap{\eqref{def:eta2k}}}{=}\; (\nabla f_k-\barg_k)^T\Delta\bx_k+\frac{1}{2}\tau_k\|\Delta\bx_k\|^2\\
& = \bargamma_k(\nabla f_k-\barg_k)^T\bv_k+(\nabla f_k-\barg_k)^TZ_k\bu_k+
\frac{1}{2}\tau_k\|\Delta\bx_k\|^2\;\;\;(\Delta\bx_k=\bargamma_k\bv_k+Z_k\bu_k)\\
&\leq  \bargamma_k(\nabla f_k-\barg_k)^T\bv_k+\|P_k(\nabla f_k-\barg_k)\|\|\bu_k\|+
\frac{1}{2}\tau_k\|\Delta\bx_k\|^2,
\end{align*}
\noindent where the last inequality uses $Z_kZ_k^T = P_k$. Combining the above result with the~reduction condition in \eqref{eq:upper_bound_predk}, and noting that $\|\bu_k\|\leq \|\Delta\bx_k\|\leq \Delta_k$, we complete the~proof.
\end{proof}

Now, we further analyze the right-hand-side of \eqref{eq:ared_k}. By taking the expectation conditional on $\bx_k$, we can show that the term $\bargamma_k(\nabla f_k-\barg_k)^T\bv_k$ is upper bounded by~a quantity proportional to the expected error of the gradient estimate.

\begin{lemma}\label{lemma:exp<=0}
Suppose Assumptions \ref{ass:1-1} and \ref{ass:Ful_unbias} hold. For all~$k\geq 0$, we have
\begin{equation*}
\mE_k[\bargamma_k(\nabla f_k-\barg_k)^T\bv_k]\leq \frac{\delta\kappa_c}{\sqrt{\kappa_{1,G}}} \alpha_k^2\cdot \mE_k[\|\nabla f_k-\barg_k\|].
\end{equation*}
\end{lemma}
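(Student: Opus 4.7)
The naive approach—apply Cauchy--Schwarz directly to $\bargamma_k(\nabla f_k-\barg_k)^T\bv_k$ and use $\bargamma_k\|\bv_k\|\leq \breve{\Delta}_k\leq\eta_{1,k}\alpha_k\|c_k\|$—yields only $\eta_{1,k}\alpha_k\kappa_c\mE_k[\|\nabla f_k-\barg_k\|]$, which is off by a factor of $\tfrac{1}{2}\zeta\alpha_k$. The extra $\alpha_k$ must come from a centering argument that exploits $\mE_k[\nabla f_k-\barg_k]=\0$. The plan is therefore to identify an $\mF_{k-1}$-measurable surrogate $\bargamma_k^{\min}$ for $\bargamma_k$ that differs from $\bargamma_k$ by only $O(\alpha_k^2)$, and then subtract it off before applying Cauchy--Schwarz.

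The first step is to examine how $\breve{\Delta}_k$ depends on the random quantity $\|\bar{\nabla}\L_k\|$. Substituting the three branches of \eqref{Ful_GenerateRadius} into $\breve{\Delta}_k=\|c_k\|\Delta_k/\|\bar{\nabla}\L_k\|$, one finds $\breve{\Delta}_k=\|c_k\|\eta_{1,k}\alpha_k$ on branch~1, $\breve{\Delta}_k=\|c_k\|\alpha_k/\|\bar{\nabla}\L_k\|\in[\|c_k\|\eta_{2,k}\alpha_k,\|c_k\|\eta_{1,k}\alpha_k]$ on branch~2, and $\breve{\Delta}_k=\|c_k\|\eta_{2,k}\alpha_k$ on branch~3. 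Hence $\breve{\Delta}_k\in[\|c_k\|\eta_{2,k}\alpha_k,\|c_k\|\eta_{1,k}\alpha_k]$, and invoking $\eta_{1,k}-\eta_{2,k}=\tfrac{1}{2}\zeta\eta_{1,k}\alpha_k$ from \eqref{def:eta2k}, the width of this interval is exactly $\tfrac{1}{2}\zeta\|c_k\|\eta_{1,k}\alpha_k^2$. This is where the crucial second power of $\alpha_k$ appears.

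Next, define the $\mF_{k-1}$-measurable surrogate $\bargamma_k^{\min}\coloneqq\min\{\|c_k\|\eta_{2,k}\alpha_k/\|\bv_k\|,\,1\}$ (and dispose of the trivial case $c_k=\0$, where both sides of the claim vanish). Since $\bargamma_k$ is nondecreasing in $\breve{\Delta}_k$, the monotone lower bound $\breve{\Delta}_k\geq\|c_k\|\eta_{2,k}\alpha_k$ yields $\bargamma_k\geq\bargamma_k^{\min}$, and a short case analysis (splitting on whether $\bargamma_k=1$ or $\bargamma_k<1$, together with the upper envelope $\breve{\Delta}_k\leq\|c_k\|\eta_{1,k}\alpha_k$) gives
\[
0\;\leq\;\bargamma_k-\bargamma_k^{\min}\;\leq\;\frac{1}{2}\zeta\|c_k\|\eta_{1,k}\alpha_k^2\big/\|\bv_k\|.
\]
Because $\bargamma_k^{\min}$ and $\bv_k$ are $\mF_{k-1}$-measurable, unbiasedness of $\barg_k$ gives $\mE_k[\bargamma_k^{\min}(\nabla f_k-\barg_k)^T\bv_k]=\bargamma_k^{\min}\bv_k^T\mE_k[\nabla f_k-\barg_k]=0$, so centering produces
\[
\mE_k[\bargamma_k(\nabla f_k-\barg_k)^T\bv_k]=\mE_k\bigl[(\bargamma_k-\bargamma_k^{\min})(\nabla f_k-\barg_k)^T\bv_k\bigr].
\]
Applying Cauchy--Schwarz inside the expectation, followed by the width bound above and $\|c_k\|\leq\kappa_c$, yields
\[
\mE_k[\bargamma_k(\nabla f_k-\barg_k)^T\bv_k]\;\leq\;\frac{1}{2}\zeta\|c_k\|\eta_{1,k}\alpha_k^2\,\mE_k[\|\nabla f_k-\barg_k\|]\;\leq\;\frac{1}{2}\zeta\kappa_c\eta_{1,k}\alpha_k^2\,\mE_k[\|\nabla f_k-\barg_k\|],
\]
which is the claim. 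The only real obstacle is the initial observation that the radius-decomposition rule \eqref{Ful_GenerateRadius}--\eqref{eq:breve and tilde_delta_k} pins $\breve{\Delta}_k$ into an interval of width $O(\alpha_k^2)$; once that is noted, the centering step is routine.
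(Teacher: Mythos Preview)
Your proof is correct and rests on the same key observation as the paper's: from \eqref{eq:breve and tilde_delta_k} and the three branches of \eqref{Ful_GenerateRadius}, the ratio $\breve{\Delta}_k/\|\bv_k\|$ (and hence $\bargamma_k$) is confined to an $\mF_{k-1}$-measurable interval $[\gamma_{k,\min},\gamma_{k,\max}]$ of width $(\eta_{1,k}-\eta_{2,k})\alpha_k\|c_k\|/\|\bv_k\|=\tfrac{1}{2}\zeta\eta_{1,k}\alpha_k^2\|c_k\|/\|\bv_k\|$. The only methodological difference is in how unbiasedness is exploited: the paper conditions on the event $E_k=\{(\nabla f_k-\barg_k)^T\bv_k\geq 0\}$, bounds $\bargamma_k$ above by $\gamma_{k,\max}$ on $E_k$ and by $\gamma_{k,\min}$ on $E_k^c$, and recombines via $\mE_k[(\nabla f_k-\barg_k)^T\bv_k]=0$; you instead subtract the deterministic surrogate $\bargamma_k^{\min}=\min\{\gamma_{k,\min},1\}$ and bound the remainder pointwise. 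Your centering route is slightly more streamlined---it avoids the event split and absorbs the paper's separate handling of the case $\gamma_{k,\min}\geq 1$ into the $\min\{\cdot,1\}$ cap---but the two arguments are essentially equivalent.
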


\begin{proof}
When $\bv_k=\b0$, the result holds trivially. We consider $\bv_k\neq\b0$. By the~design of the projection in \eqref{def:proj}, we know
\begin{equation}\label{pequ:1}
\gamma_{k,\min}\coloneqq \frac{1}{2}\zeta\phi_k\alpha_k\leq \bargamma_k\leq \frac{1}{2}\zeta\phi_k\alpha_k+\delta\alpha_k^2 \eqqcolon \gamma_{k,\max}.
\end{equation}
\noindent Note that $\sigma(\gamma_{k,\min},\gamma_{k,\max})\subseteq\mF_{k-1}$. Let $E_k$ be the event that $(\nabla f_k-\barg_k)^T\bv_k\geq0$, $E_k^c$ be its complement, and $\mathbb{P}_k[\cdot]$ denote the probability conditional on $\mF_{k-1}$. By the law of total expectation, one finds
{\allowdisplaybreaks\begin{align*}
\mE_k& [\bargamma_k(\nabla f_k-\barg_k)^T\bv_k]\\
& = \mE_k[\bargamma_k(\nabla f_k-\barg_k)^T\bv_k \mid E_k]\mathbb{P}_k[E_k]+\mE_k[\bargamma_k(\nabla f_k-\barg_k)^T\bv_k \mid E_k^c]\mathbb{P}_k[E_k^c] \\
& \stackrel{\mathclap{\eqref{pequ:1}}} {\leq} \gamma_{k,\max}\mE_k[(\nabla f_k-\barg_k)^T\bv_k \mid E_k]\mathbb{P}_k[E_k] + \gamma_{k,\min}\mE_k[(\nabla f_k-\barg_k)^T\bv_k \mid E_k^c]\mathbb{P}_k[E_k^c]\\
& = (\gamma_{k,\max}-\gamma_{k,\min})\mE_k[(\nabla f_k-\barg_k)^T\bv_k \mid E_k]\mathbb{P}_k[E_k]\quad (\text{by Assumption } \ref{ass:Ful_unbias})\\
& \leq (\gamma_{k,\max}-\gamma_{k,\min})\mE_k[\|\nabla f_k-\barg_k\|\|\bv_k\| \mid E_k]\mathbb{P}_k[E_k]\\
& \leq (\gamma_{k,\max}-\gamma_{k,\min})\|\bv_k\|\mE_k[\|\nabla f_k-\barg_k\|]\\
& \stackrel{\mathclap{\eqref{pequ:1}}}{=} \;
\delta\alpha_k^2\|\bv_k\|\mE_k[\|\nabla f_k-\barg_k\|]\stackrel{\mathclap{\eqref{def:eta2k}}}{\leq}\;\frac{\delta\kappa_c}{\sqrt{\kappa_{1,G}}}\alpha_k^2\mE_k[\|\nabla f_k-\barg_k\|].
\end{align*}}
\noindent Here, the last inequality follows from $\bv_k=G_k^T[G_kG_k^T]^{-1}c_k$ and Assumption \ref{ass:1-1}.
\end{proof}

We further simplify the result of \eqref{eq:ared_k} using the trust-region scheme in \eqref{Ful_GenerateRadius}.

\begin{lemma}\label{lemma:Ful_cond2}

Suppose Assumptions \ref{ass:1-1}, \ref{ass:stabilized_penalty}, and \ref{ass:Ful_unbias} hold and $\{\beta_k\}\subseteq(0,\beta_{\max}]$.~For all $k\geq\barK+1$, we have
\begin{multline*}
\mE_k[\L_{\barmu_{\barK}}^{k+1}]
\leq  \L_{\barmu_{\barK}}^k-\frac{1}{4}\eta_{2,k}\alpha_k\|\nabla\L_k\|^2 + \frac{\delta\kappa_c}{\sqrt{\kappa_{1,G}}} \alpha_k^2\mE_k[\|\nabla f_k-\barg_k\|] \\
+ \left(\zeta+\eta_{1,k}\tau_k\right)\eta_{1,k}\alpha_k^2\mE_k[\|\nabla f_k-\barg_k\|^2].
\end{multline*}
\end{lemma}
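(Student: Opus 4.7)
The plan is to begin from Lemma~\ref{lemma:ared_k}, take $\mE_k[\cdot]$ on both sides, and process each of the seven summands on the right-hand side. The term $\bargamma_k(\nabla f_k-\barg_k)^T\bv_k$ is handled directly by Lemma~\ref{lemma:exp<=0}, producing the $\tfrac12\zeta\kappa_c\eta_{1,k}\alpha_k^2\mE_k[\|\nabla f_k-\barg_k\|]$ contribution verbatim. The two negative linear terms are merged by substituting the definitions $\tilde\Delta_k=\|\bar\nabla_\bx\L_k\|\Delta_k/\|\bar\nabla\L_k\|$ and $\breve\Delta_k=\|c_k\|\Delta_k/\|\bar\nabla\L_k\|$ from~\eqref{eq:breve and tilde_delta_k} together with the identity $\|\bar\nabla_\bx\L_k\|^2+\|c_k\|^2=\|\bar\nabla\L_k\|^2$, which yields $-\|\bar\nabla_\bx\L_k\|\tilde\Delta_k-\tfrac12\|c_k\|\breve\Delta_k\leq-\tfrac12\|\bar\nabla\L_k\|\Delta_k$.

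Next, the three pure quadratic pieces are collapsed to $(\|B_k\|+\tfrac12\tau_k)\Delta_k^2$ by applying AM-GM to the cross term $\|B_k\|\breve\Delta_k\tilde\Delta_k$ together with $\tilde\Delta_k^2+\breve\Delta_k^2=\Delta_k^2$. The remaining cross noise-radius term $\|P_k(\nabla f_k-\barg_k)\|\tilde\Delta_k$ is handled by Young's inequality, with parameter calibrated so that its noise-squared coefficient is exactly $\zeta\eta_{1,k}\alpha_k^2$ (supplying the $2\zeta$ piece inside the target brace) and so that its radius-squared residual can be absorbed together with $(\|B_k\|+\tfrac12\tau_k)\Delta_k^2$. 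I would then invoke the pathwise envelopes
\[
\eta_{2,k}\alpha_k\|\bar\nabla\L_k\|\;\leq\;\Delta_k\;\leq\;\eta_{1,k}\alpha_k\|\bar\nabla\L_k\|,
\]
which are straightforward to verify case by case from~\eqref{Ful_GenerateRadius}, to express every remaining term as a multiple of either $\|\bar\nabla\L_k\|\Delta_k$ or $\|\bar\nabla\L_k\|^2$. The control-parameter relations $\eta_{1,k}\|B_k\|\leq\zeta$ and $(4\eta_{1,k}\tau_k+6\zeta)\alpha_k\leq\beta_k/\beta_{\max}\leq 1$ from~\eqref{def:eta2k} yield $\eta_{1,k}\alpha_k\tau_k\leq\tfrac14$ and $\zeta\alpha_k\leq\tfrac16$, which together ensure that the positive quadratic and Young-residual contributions stay strictly below $\tfrac12\|\bar\nabla\L_k\|\Delta_k$ and produce the second target noise term $\tfrac12\eta_{1,k}^2(\tau_k+\|B_k\|)\alpha_k^2\mE_k[\|\nabla f_k-\barg_k\|^2]$.

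Finally, the key observation for closing is that unbiasedness $\mE_k[\barg_k]=\nabla f_k$ together with $\nabla_\bx\L_k=P_k\nabla f_k$ and $\bar\nabla_\bx\L_k=P_k\barg_k$ gives
\[
\mE_k[\|\bar\nabla\L_k\|^2]=\|\nabla\L_k\|^2+\mE_k[\|P_k(\barg_k-\nabla f_k)\|^2]\;\geq\;\|\nabla\L_k\|^2,
\]
so a pathwise bound of the form $-C\eta_{2,k}\alpha_k\|\bar\nabla\L_k\|^2$ with $C\geq\tfrac14$ becomes at most $-\tfrac14\eta_{2,k}\alpha_k\|\nabla\L_k\|^2$ after applying $\mE_k[\cdot]$. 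The main obstacle is the bookkeeping in the middle paragraph: the Young residual on $\|P_k(\nabla f_k-\barg_k)\|\tilde\Delta_k$ and the quadratic $\Delta_k^2$ contribution must jointly remain below the linear reduction $-\tfrac12\|\bar\nabla\L_k\|\Delta_k$ with enough slack to leave a clean coefficient of $\tfrac14$ after passing from $\|\bar\nabla\L_k\|^2$ to $\|\nabla\L_k\|^2$, and this is precisely what the denominator $(4\eta_{1,k}\tau_k+6\zeta)\beta_{\max}$ in the definition of $\alpha_k$ is calibrated to deliver.
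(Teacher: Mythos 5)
Your skeleton is right in several places --- starting from Lemma~\ref{lemma:ared_k}, invoking Lemma~\ref{lemma:exp<=0} for the $\bargamma_k(\nabla f_k-\barg_k)^T\bv_k$ term, the envelope $\eta_{2,k}\alpha_k\|\bnabla\L_k\|\leq\Delta_k\leq\eta_{1,k}\alpha_k\|\bnabla\L_k\|$, and the parameter bounds $\eta_{1,k}\alpha_k\tau_k\leq\tfrac14$, $\zeta\alpha_k\leq\tfrac16$ --- but the central Young's-inequality step does not close. Young's inequality forces the product of the two coefficients to equal $\tfrac14$, so if you calibrate the noise-squared coefficient to be $\zeta\eta_{1,k}\alpha_k^2$, the residual on the radius side is $\tilde{\Delta}_k^2/(4\zeta\eta_{1,k}\alpha_k^2)\leq \eta_{1,k}\|\bnabla_{\bx}\L_k\|^2/(4\zeta)$, i.e.\ a multiple of $\|\bnabla_{\bx}\L_k\|^2$ with \emph{no} factor of $\alpha_k$. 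The only negative term available to absorb it is of order $\eta_{2,k}\alpha_k\|\bnabla\L_k\|^2$, and absorption would require $\alpha_k\gtrsim \eta_{1,k}/(2\zeta\eta_{2,k})\approx 1/(2\zeta)$, whereas \eqref{def:eta2k} gives $\alpha_k\leq 1/(6\zeta)$. So the "Young residual stays below $\tfrac12\|\bnabla\L_k\|\Delta_k$" claim is false by roughly a factor of $1/\alpha_k$, and no choice of Young parameter yields an $O(\alpha_k^2)$ noise coefficient with an absorbable residual.

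The paper's route around this is precisely the mechanism you only use as a one-sided inequality at the end. One applies the \emph{balanced} Young split to $\eta_{1,k}\alpha_k\|P_k(\nabla f_k-\barg_k)\|\,\|\bnabla_{\bx}\L_k\|$, accepting a noise term with the large coefficient $\tfrac12\eta_{1,k}\alpha_k$ (order $\alpha_k$, not $\alpha_k^2$) at the price of eating $\tfrac12\eta_{1,k}\alpha_k\|\bnabla_{\bx}\L_k\|^2$ of the linear reduction. The $O(\alpha_k^2)$ coefficient in the statement then emerges only \emph{after} taking $\mE_k$, via the identity $\mE_k[\|\bnabla_{\bx}\L_k\|^2]=\|\nabla_{\bx}\L_k\|^2+\mE_k[\|P_k(\nabla f_k-\barg_k)\|^2]$ used with its full signed coefficient: the surplus $\mE_k[\|P_k(\nabla f_k-\barg_k)\|^2]$ carries the (mostly negative, $\approx-\tfrac12\eta_{1,k}\alpha_k$) coefficient of $\mE_k[\|\bnabla_{\bx}\L_k\|^2]$ and cancels the $+\tfrac12\eta_{1,k}\alpha_k$ Young noise term down to $\eta_{1,k}-\eta_{2,k}+O(\alpha_k)=O(\alpha_k)$ times $\alpha_k$, which is exactly the $\tfrac12\{2\zeta+\eta_{1,k}(\tau_k+\|B_k\|)\}\eta_{1,k}\alpha_k^2$ in the lemma. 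Discarding the surplus via $\mE_k[\|\bnabla\L_k\|^2]\geq\|\nabla\L_k\|^2$, as you propose, throws away the cancellation you need. A related budgeting issue: merging the two linear terms into $-\tfrac12\|\bnabla\L_k\|\Delta_k$ prematurely halves the coefficient on $\|\bnabla_{\bx}\L_k\|^2$; the paper keeps the full $-\|\bnabla_{\bx}\L_k\|\tilde{\Delta}_k$ and the separate $-\tfrac12\|c_k\|\breve{\Delta}_k$ precisely because the $\bnabla_{\bx}\L_k$ direction must pay for both the Young split and the variance cancellation, while the $c_k$ direction need not.
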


\begin{proof}
According to the definition in \eqref{Ful_GenerateRadius}, we separate the proof into the following three cases: $\|\bar{\nabla}\L_k\|\in(0,1/\eta_{1,k})$, $\|\bar{\nabla}\L_k\|\in[1/\eta_{1,k},1/\eta_{2,k}]$, and $\|\bar{\nabla}\L_k\|\in(1/\eta_{2,k}, \infty)$.

\vskip4pt

\noindent\textbf{Case 1}, $\|\bar{\nabla}\L_k\|\in(0,1/\eta_{1,k})$. We have $\Delta_k = \eta_{1,k}\alpha_k\|\bnabla\mL_k\|$, therefore
\begin{align*}
-\|\bar{\nabla}\L_k\|\Delta_k+\frac{1}{2}\|B_k\|\Delta_k^2 & = -\eta_{1,k}\alpha_k\|\bar{\nabla}\L_k\|^2+\frac{1}{2}\eta_{1,k}^2\alpha_k^2\|B_k\|\|\bar{\nabla}\L_k\|^2\notag\\
& = -\left(1-\frac{1}{2}\eta_{1,k}\alpha_k\|B_k\|\right)\eta_{1,k}\alpha_k\|\bar{\nabla}\L_k\|^2.
\end{align*}
\noindent Plugging the above expression into \eqref{eq:ared_k} and applying \eqref{Ful_GenerateRadius}, we have
\begin{align}\label{eq:bddvar_case1}
\L_{\barmu_{\barK}}^{k+1}-\L_{\barmu_{\barK}}^k & \leq
-\left(1-\frac{1}{2}\eta_{1,k}\alpha_k\|B_k\|\right)\eta_{1,k}\alpha_k\|\bar{\nabla}\L_k\|^2+\bargamma_k(\nabla f_k-\barg_k)^T\bv_k\notag\\
&\quad  +  \eta_{1,k}\alpha_k\|P_k(\nabla f_k-\barg_k)\|\|\bar{\nabla}\L_k\|+\frac{1}{2}\eta_{1,k}^2\alpha_k^2\tau_k\|\bar{\nabla}\L_k\|^2\notag\\
& \leq -\frac{1}{2}\left(1-\eta_{1,k}\alpha_k\|B_k\|-\eta_{1,k}\alpha_k\tau_k\right)\eta_{1,k}\alpha_k\|\bar{\nabla}\L_k\|^2\notag\\
& \quad +\bargamma_k(\nabla f_k-\barg_k)^T\bv_k +\frac{1}{2}\eta_{1,k}\alpha_k\|P_k(\nabla f_k-\barg_k)\|^2\;(\text{by Young's inequality})\notag\\
& \leq -\left(\frac{1}{2}-\eta_{1,k}\alpha_k\tau_k\right)\eta_{1,k}\alpha_k\|\bar{\nabla}\L_k\|^2+\bargamma_k(\nabla f_k-\barg_k)^T\bv_k \notag\\
& \quad +\frac{1}{2}\eta_{1,k}\alpha_k\|P_k(\nabla f_k-\barg_k)\|^2 \quad(\text{since by \eqref{def:eta2k}, }\|B_k\|\leq \tau_k).
\end{align}
\noindent\textbf{Case 2}, $\|\bar{\nabla}\L_k\|\in[1/\eta_{1,k},1/\eta_{2,k}]$. We have $\Delta_k=\alpha_k$ and thus 
\begin{align*}
- \|\bar{\nabla}\L_k\|\Delta_k +\frac{1}{2}\|B_k\|\Delta_k^2 
 = - \|\bar{\nabla}\L_k\|\alpha_k +\frac{1}{2}\|B_k\|\alpha_k^2  \leq -\eta_{2,k}\alpha_k\|\bar{\nabla}\L_k\|^2+\frac{1}{2}\eta_{1,k}^2\alpha_k^2\|B_k\|\|\bar{\nabla}\L_k\|^2,
\end{align*}
\noindent where the inequality is due to $\eta_{1,k}\|\bar{\nabla}\L_k\|\geq 1\geq \eta_{2,k}\|\bar{\nabla}\L_k\|$. Plugging the above~expression into \eqref{eq:ared_k}, using the relation $\eta_{1,k}\|\bar{\nabla}\L_k\|\geq 1$ again, we have {\allowdisplaybreaks
\begin{align}\label{pequ:case2}
\L_{\barmu_{\barK}}^{k+1}-\L_{\barmu_{\barK}}^k & \leq -\left(\eta_{2,k}-\frac{1}{2}\eta_{1,k}^2\alpha_k\|B_k\|\right)\alpha_k\|\bar{\nabla}\L_k\|^2+\bargamma_k(\nabla f_k-\barg_k)^T\bv_k \notag\\
& \quad +\eta_{1,k}\alpha_k\|P_k(\nabla f_k-\barg_k)\|\|\bar{\nabla}\L_k\|+\frac{1}{2}\eta_{1,k}^2\alpha_k^2\tau_k\|\bar{\nabla}\L_k\|^2 \notag\\
& \leq -\left(\eta_{2,k}-\frac{1}{2}\eta_{1,k}^2\alpha_k\|B_k\|-\frac{1}{2}\eta_{1,k}-\frac{1}{2}\eta_{1,k}^2\alpha_k\tau_k\right)\alpha_k\|\bar{\nabla}\L_k\|^2 \notag\\
& \quad +\bargamma_k(\nabla f_k-\barg_k)^T\bv_k+\frac{1}{2}\eta_{1,k}\alpha_k\|P_k(\nabla f_k-\barg_k)\|^2 \;(\text{by Young's inequality})\notag\\
& \leq -\left(\eta_{2,k}-\frac{1}{2}\eta_{1,k}-\eta_{1,k}^2\alpha_k\tau_k\right)\alpha_k\|\bar{\nabla}\L_k\|^2 +\bargamma_k(\nabla f_k-\barg_k)^T\bv_k \notag\\
&\quad +\frac{1}{2}\eta_{1,k}\alpha_k\|P_k(\nabla f_k-\barg_k)\|^2\quad(\text{since by \eqref{def:eta2k}, }\|B_k\|\leq \tau_k).
\end{align}}
\noindent\textbf{Case 3}, $\|\bar{\nabla}\L_k\|\in(1/\eta_{2,k},\infty)$. We have $\Delta_k=\eta_{2,k}\alpha_k\|\bar{\nabla}\L_k\|$ and thus 
\begin{align*}
-\|\bar{\nabla}\L_k\|\Delta_k+\frac{1}{2}\|B_k\|\Delta_k^2  & = -\eta_{2,k}\alpha_k\|\bar{\nabla}\L_k\|^2+\frac{1}{2}\eta_{2,k}^2\alpha_k^2\|B_k\|\|\bar{\nabla}\L_k\|^2 \notag\\
& = -\left(1-\frac{1}{2}\eta_{2,k}\alpha_k\|B_k\|\right)\eta_{2,k}\alpha_k\|\bar{\nabla}\L_k\|^2.
\end{align*}
\noindent Plugging into \eqref{eq:ared_k} and applying \eqref{Ful_GenerateRadius}, we have 
\begin{align}\label{pequ:case3}
\L_{\barmu_{\barK}}^{k+1}-\L_{\barmu_{\barK}}^k & \leq -\left(1-\frac{1}{2}\eta_{2,k}\alpha_k\|B_k\|\right)\eta_{2,k}\alpha_k\|\bar{\nabla}\L_k\|^2 +\bargamma_k(\nabla f_k-\barg_k)^T\bv_k \notag\\
&\quad  + \eta_{2,k}\alpha_k\|P_k(\nabla f_k-\barg_k)\|\|\bar{\nabla}\L_k\|  +\frac{1}{2}\eta_{2,k}^2\alpha_k^2\tau_k\|\bar{\nabla}\L_k\|^2 \notag\\
& \leq -\frac{1}{2}\left(1-\eta_{2,k}\alpha_k\|B_k\|-\eta_{2,k}\alpha_k\tau_k\right)\eta_{2,k}\alpha_k\|\bar{\nabla}\L_k\|^2\notag\\
& \quad +\bargamma_k(\nabla f_k-\barg_k)^T\bv_k +\frac{1}{2}\eta_{2,k}\alpha_k\|P_k(\nabla f_k-\barg_k)\|^2\;(\text{by Young's inequality})\notag\\
& \leq -\left(\frac{1}{2}-\eta_{2,k}\alpha_k\tau_k\right)\eta_{2,k}\alpha_k\|\bar{\nabla}\L_k\|^2+\bargamma_k(\nabla f_k-\barg_k)^T\bv_k\notag\\
& \quad  +\frac{1}{2}\eta_{2,k}\alpha_k\|P_k(\nabla f_k-\barg_k)\|^2 \quad(\text{since by \eqref{def:eta2k}, }\|B_k\|\leq \tau_k).
\end{align}
\noindent Using $\eta_{2,k}\leq \eta_{1,k}$ and taking an upper for the results of the three cases in \eqref{eq:bddvar_case1}, \eqref{pequ:case2}, and \eqref{pequ:case3}, we have 
\begin{multline*}
\L_{\barmu_{\barK}}^{k+1} -\L_{\barmu_{\barK}}^k  \leq -\left(\eta_{2,k}-\frac{1}{2}\eta_{1,k}-\eta_{1,k}^2\alpha_k\tau_k\right)\alpha_k\|\bar{\nabla}\L_k\|^2 \notag\\
+\bargamma_k(\nabla f_k-\barg_k)^T\bv_k+\frac{1}{2}\eta_{1,k}\alpha_k\|P_k(\nabla f_k-\barg_k)\|^2.
\end{multline*}
\noindent Taking expectation conditional on $\bx_k$, applying Lemma \ref{lemma:exp<=0}, and noting that $\mE_k[\|\bar{\nabla}\L_k\|^2] = \|\nabla\L_k\|^2 + \mE_k[\|P_k(\nabla f_k-\barg_k)\|^2]$, we have
{\allowdisplaybreaks\begin{align*}
\mE_k[\L_{\barmu_{\barK}}^{k+1}]-\L_{\barmu_{\barK}}^k 
& \leq -\left(\eta_{2,k}-\frac{1}{2}\eta_{1,k}-\eta_{1,k}^2\alpha_k\tau_k\right)\alpha_k\mE_k[\|\bar{\nabla}\L_k\|^2] \\
& \quad +\frac{\delta\kappa_c}{\sqrt{\kappa_{1,G}}}\alpha_k^2\mE_k[\|\nabla f_k-\barg_k\|] +\frac{1}{2}\eta_{1,k}\alpha_k\mE_k[\|P_k(\nabla f_k-\barg_k)\|^2]\\
& = -\left(\eta_{2,k}-\frac{1}{2}\eta_{1,k}-\eta_{1,k}^2\alpha_k\tau_k\right)\alpha_k\|\nabla\L_k\|^2\\
&\quad -\left(\eta_{2,k}-\frac{1}{2}\eta_{1,k}-\eta_{1,k}^2\alpha_k\tau_k\right)\alpha_k\mE_k[\|P_k(\nabla f_k-\barg_k)\|^2]\\
&\quad +\frac{\delta\kappa_c}{\sqrt{\kappa_{1,G}}}\alpha_k^2\mE_k[\|\nabla f_k-\barg_k\|] +\frac{1}{2}\eta_{1,k}\alpha_k\mE_k[\|P_k(\nabla f_k-\barg_k)\|^2]\\
& = -\left(\eta_{2,k}-\frac{1}{2}\eta_{1,k}-\eta_{1,k}^2\alpha_k\tau_k\right)\alpha_k\|\nabla\L_k\|^2+\frac{\delta\kappa_c}{\sqrt{\kappa_{1,G}}}\alpha_k^2\mE_k[\|\nabla f_k-\barg_k\|]\\
&\quad +\left(\eta_{1,k}-\eta_{2,k}+\eta_{1,k}^2\alpha_k\tau_k\right)\alpha_k\mE_k[\|P_k(\nabla f_k-\barg_k)\|^2].
\end{align*}}
\noindent Furthermore, we note that 
\begin{align*}
\alpha_k \stackrel{\eqref{def:eta2k}}{\leq} \frac{2}{8\eta_{1,k}\tau_k+3\zeta} & \Longrightarrow 3\zeta\alpha_k+8\eta_{1,k}\alpha_k\tau_k\leq 2 \\
& \Longrightarrow \frac{1}{2}+\eta_{1,k}\alpha_k\tau_k\leq \frac{3}{4}-\frac{3}{8}\zeta\alpha_k \\
& \Longrightarrow \frac{1}{2}\eta_{1,k}+\eta_{1,k}^2\alpha_k\tau_k\leq \frac{3}{4}\eta_{1,k}\left(1-\frac{1}{2}\zeta\alpha_k\right)\stackrel{\eqref{def:eta2k}}{=} \frac{3}{4}\eta_{2,k}\\
& \Longrightarrow -\left(\eta_{2,k}-\frac{1}{2}\eta_{1,k}-\eta_{1,k}^2\alpha_k\tau_k\right) \leq -\frac{1}{4}\eta_{2,k}.
\end{align*}
\noindent Combining the above two results and using \eqref{def:eta2k}, we have
\begin{multline*}
\mE_k[ \L_{\barmu_{\barK}}^{k+1}]-\L_{\barmu_{\barK}}^k \leq -\frac{1}{4}\eta_{2,k}\alpha_k\|\nabla\L_k\|^2+ \frac{\delta\kappa_c}{\sqrt{\kappa_{1,G}}}\alpha_k^2\mE_k[\|\nabla f_k-\barg_k\|]\\
+\left(\zeta+\eta_{1,k}\tau_k\right)\eta_{1,k}\alpha_k^2\mE_k[\|P_k(\nabla f_k-\barg_k)\|^2].
\end{multline*}
\noindent The conclusion follows by noting that $\mE_k[\|P_k(\nabla f_k-\barg_k)\|^2]\leq\mE_k[\|\nabla f_k-\barg_k\|^2]$.
\end{proof}

Finally, we present some properties of the control parameters generated in Step~1 of Algorithm~\ref{Alg:Non-adap}.

 \begin{lemma}\label{lemma:4statements}

Let Assumptions \ref{ass:1-1}, \ref{ass:stabilized_penalty} hold and $\{\beta_k\}\subseteq(0,\beta_{\max}]$. For all~$k\geq0$,\\
(a) there exist constants $\eta_{\min},\eta_{\max}>0$ such that $\eta_{\min}\leq\eta_{2,k}\leq\eta_{1,k}\leq\eta_{\max}$;\\
(b) there exists a constant $\tau_{\max}>0$ such that $\tau_k\leq\tau_{\max}$;\\
(c) there exist constants $\alpha_l,\alpha_u>0$ such that $\alpha_k\in[\alpha_l\beta_k,\alpha_u\beta_k]$.
\end{lemma}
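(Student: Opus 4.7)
The plan is to handle the three parts in the order (b), (a), (c), since part (a)'s lower bound uses the upper bound on $\alpha_k$, which follows easily from part (b) alone.

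For part (b), the bound on $\tau_k$ is immediate from the definition $\tau_k = L_{\nabla f,k} + L_{G,k}\barmu_{k-1} + \|B_k\|$. Assumption \ref{ass:1-1} gives $L_{\nabla f,k}\leq L_{\nabla f}$, $L_{G,k}\leq L_G$, and $\|B_k\|\leq\kappa_B$. The only nontrivial piece is bounding $\barmu_{k-1}$; since $\barmu_k$ is non-decreasing along iterations (it is only ever multiplied by $\rho>1$ in the While loop), Assumption \ref{ass:stabilized_penalty} forces $\barmu_{k-1}\leq \barmu_{\barK}\leq \hatmu$ for all $k\geq 0$. Thus $\tau_{\max} := L_{\nabla f} + L_G\hatmu + \kappa_B$ works.

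For part (a), note first that Assumption \ref{ass:1-1} gives $\sqrt{\kappa_{1,G}}\leq\|G_k\|\leq\sqrt{\kappa_{2,G}}$ via the spectral bounds on $G_kG_k^T$, and $\|B_k\|\leq\kappa_B$. For the upper bound on $\eta_{1,k}$, I exploit the fact that $\eta_{1,k}$ is the minimum of two quantities and discard the $1/\|B_k\|$ branch, which is not uniformly bounded above; the second branch gives $\eta_{1,k}\leq 6\zeta\beta_{\max}/\sqrt{\kappa_{1,G}}$, and the monotone relation $\eta_{2,k}\leq\eta_{1,k}$ is visible in the formula once one verifies $\frac{1}{2}\zeta\alpha_k\leq 1$. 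For the lower bound, both branches of the minimum are bounded below by positive constants: $1/\|B_k\|\geq 1/\kappa_B$ and $6\beta_{\max}/\|G_k\|\geq 6\beta_{\max}/\sqrt{\kappa_{2,G}}$, so $\eta_{1,k}\geq \zeta\min\{1/\kappa_B,\,6\beta_{\max}/\sqrt{\kappa_{2,G}}\} =: \underline{\eta}$. To pass this lower bound from $\eta_{1,k}$ to $\eta_{2,k}$, I use $\alpha_k\leq \beta_k/(6\zeta\beta_{\max})\leq 1/(6\zeta)$, which drops directly out of the definition of $\alpha_k$ because the denominator in $\alpha_k$ is at least $6\zeta\beta_{\max}$. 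This gives $\frac{1}{2}\zeta\alpha_k\leq 1/12$, hence $\eta_{2,k}\geq\tfrac{11}{12}\eta_{1,k}\geq \tfrac{11}{12}\underline{\eta}=:\eta_{\min}$, and we set $\eta_{\max}:= 6\zeta\beta_{\max}/\sqrt{\kappa_{1,G}}$.

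For part (c), I plug the bounds from (a) and (b) into the denominator of $\alpha_k = \beta_k/((4\eta_{1,k}\tau_k+6\zeta)\beta_{\max})$. The upper bound $\eta_{1,k}\leq\eta_{\max}$ and $\tau_k\leq\tau_{\max}$ give $4\eta_{1,k}\tau_k+6\zeta\leq 4\eta_{\max}\tau_{\max}+6\zeta$, while $4\eta_{1,k}\tau_k\geq 0$ gives $4\eta_{1,k}\tau_k+6\zeta\geq 6\zeta$. These yield $\alpha_l := 1/((4\eta_{\max}\tau_{\max}+6\zeta)\beta_{\max})$ and $\alpha_u := 1/(6\zeta\beta_{\max})$. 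There is no real obstacle here — the only conceptual point worth highlighting is that part (c) and the lower bound in (a) are mildly entangled (both use the smallness of $\zeta\alpha_k$), but the coupling is resolved by observing that the weakest universal bound $\alpha_k\leq 1/(6\zeta)$ needs only part (b) in its derivation, breaking the circularity.
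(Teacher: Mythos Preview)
Your proposal is correct and follows essentially the same argument as the paper, arriving at the identical constants $\tau_{\max}=L_{\nabla f}+L_G\hatmu+\kappa_B$, $\eta_{\max}=6\zeta\beta_{\max}/\sqrt{\kappa_{1,G}}$, $\eta_{\min}=\tfrac{11}{12}\zeta\min\{1/\kappa_B,\,6\beta_{\max}/\sqrt{\kappa_{2,G}}\}$, $\alpha_l=1/((4\eta_{\max}\tau_{\max}+6\zeta)\beta_{\max})$, and $\alpha_u=1/(6\zeta\beta_{\max})$. Your reordering (b)$\to$(a)$\to$(c) and your remark that the apparent coupling between (a) and (c) is broken by the crude bound $\alpha_k\leq 1/(6\zeta)$ are nice expository touches, though note that the inequality $\eta_{2,k}\leq\eta_{1,k}$ is immediate from $\eta_{2,k}=\eta_{1,k}(1-\tfrac{1}{2}\zeta\alpha_k)$ and $\alpha_k>0$, without needing $\tfrac{1}{2}\zeta\alpha_k\leq 1$ (that condition instead ensures $\eta_{2,k}>0$).
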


\begin{proof}
(a) By \eqref{def:eta2k}, we see that $\eta_{2,k}\leq \eta_{1,k}$. Further, by Assumption \ref{ass:1-1}, we have
\begin{align*}
\eta_{1,k} & \stackrel{\eqref{def:eta2k}}{=} \zeta\cdot\|\bv_k\|/\|c_k\|\leq \zeta\cdot\|G_k^T[G_kG_k^T]^{-1}\|\leq \zeta/\sqrt{\kappa_{1,G}} \eqqcolon \eta_{\max},\\
\eta_{2,k} & \stackrel{\eqref{def:eta2k}}{=} \eta_{1,k}\rbr{1 - \frac{\zeta\alpha_k}{2}} \stackrel{\eqref{def:eta2k}}{\geq}\eta_{1,k}\rbr{1 - \frac{\zeta}{2}\cdot \frac{1}{4\zeta}} \stackrel{\eqref{def:eta2k}}{\geq} \frac{7\zeta\|\bv_k\|}{8\|c_k\|}\geq \frac{7\zeta }{8\sqrt{\kappa_{2,G}}}\eqqcolon\eta_{\min}.
\end{align*}
\noindent (b) By Assumptions \ref{ass:1-1} and \ref{ass:stabilized_penalty}, we have $L_{\nabla f,k}\leq L_{\nabla f}, L_{G,k}\leq L_G$, $\|B_k\|\leq\kappa_B$, and $\barmu_{k}\leq \hat{\mu}$. Thus, we let $\tau_{\max} \coloneqq L_{\nabla f}+L_G\hatmu+\kappa_B$ and the result holds. (c)~We~let~$\alpha_l \coloneqq1 / (4\eta_{\max}\tau_{\max}\beta_{\max}+4\zeta\beta_{\max})$ and $\alpha_u \coloneqq 1/(4\zeta\beta_{\max})$, and the result holds.
 \end{proof}

In the next subsection, we use Lemmas \ref{lemma:Ful_cond2} and \ref{lemma:4statements} to show the global convergence of TR-StoSQP. We consider both constant and decaying $\beta_k$ sequences.

\subsection{Global convergence}\label{subsec:global_conv}

We first consider constant $\beta_k$, i.e., $\beta_k=\beta\in(0, \beta_{\max}]$, $\forall k\geq 0$. We show that the expectation of weighted averaged KKT residuals converges to a neighborhood around zero with a radius of the order $\cO(\beta)$. When the growth~condition parameter $M_{g,1}=0$ (cf. Assumption \ref{ass:Ful_unbias}), the weighted average reduces to the uniform average. 

\begin{lemma}\label{thm:Ful_bddvar_const}

Suppose Assumptions \ref{ass:1-1}, \ref{ass:stabilized_penalty}, and \ref{ass:Ful_unbias} hold and $\beta_k=\beta\in(0,\beta_{\max}]$, $\forall k\geq 0$. For any positive integer $K>0$, we define $w_k = (1+\Upsilon M_{g,1}\beta^2)^{\barK+K-k}$,~$\barK\leq k\leq \barK+K$, with $\Upsilon \coloneqq (\zeta\eta_{\max}+\eta_{\max}^2\tau_{\max}+\delta\kappa_c/\sqrt{\kappa_{1,G}})\alpha_u^2$. We have (cf. $\mE_{\barK+1}[\cdot] = \mE[\cdot\mid \mF_{\barK}]$)
\begin{equation*}
\mE_{\barK+1}\left[\frac{\sum_{k=\barK+1}^{\barK+K}w_k\|\nabla\L_k\|^2}{\sum_{k=\barK+1}^{\barK+K}w_k}\right] \leq\frac{4}{\eta_{\min}\alpha_l\beta}\cdot\frac{w_{\barK}(\L_{\barmu_{\barK}}^{\barK+1}-f_{\inf})}{\sum_{k=\barK+1}^{\barK+K}w_k}+\frac{4\Upsilon M_g}{ \eta_{\min}\alpha_l}\beta.
\end{equation*}

\end{lemma}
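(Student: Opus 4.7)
The plan is to distill Lemma~\ref{lemma:Ful_cond2} into a single-step drift inequality of the form
\begin{equation*}
\mE_k[\L_{\barmu_{\barK}}^{k+1} - f_{\inf}] \leq (1 + \Upsilon M_{g,1}\beta^2)(\L_{\barmu_{\barK}}^k - f_{\inf}) - \tfrac{1}{4}\eta_{\min}\alpha_l\beta\,\|\nabla\L_k\|^2 + \Upsilon M_g\beta^2
\end{equation*}
valid for every $k \geq \barK+1$, and then to telescope it against the geometric weights $\{w_k\}$. Starting from Lemma~\ref{lemma:Ful_cond2}, I would plug in the uniform bounds of Lemma~\ref{lemma:4statements} (namely $\eta_{2,k}\alpha_k \geq \eta_{\min}\alpha_l\beta$, $\eta_{1,k}\alpha_k^2 \leq \eta_{\max}\alpha_u^2\beta^2$, and $\tau_k + \|B_k\| \leq 2\tau_{\max}$, the latter using $\|B_k\|\leq\tau_k$ from \eqref{def:eta2k}), and apply Assumption~\ref{ass:Ful_unbias} to bound the second-moment term by $(\zeta + \eta_{\max}\tau_{\max})\eta_{\max}\alpha_u^2\beta^2 (M_g + M_{g,1}(f_k - f_{\inf}))$. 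For the first-moment term I would combine Jensen's inequality with the assumption $M_g \geq 1$:
\begin{equation*}
\mE_k[\|\nabla f_k - \barg_k\|] \leq \sqrt{\mE_k[\|\nabla f_k - \barg_k\|^2]} \leq \sqrt{M_g + M_{g,1}(f_k - f_{\inf})} \leq M_g + M_{g,1}(f_k - f_{\inf}),
\end{equation*}
where the last step uses $\sqrt{x}\leq x$ for $x \geq 1$. This contributes an additional $\tfrac{1}{2}\zeta\kappa_c\eta_{\max}\alpha_u^2\beta^2\,(M_g + M_{g,1}(f_k - f_{\inf}))$, and summing the two noise contributions gives a combined coefficient upper-bounded by $\Upsilon\beta^2$ in view of the definition of $\Upsilon$. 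Finally, since $\barmu_{\barK}\|c_k\|\geq 0$ we have $f_k - f_{\inf} \leq \L_{\barmu_{\barK}}^k - f_{\inf}$, which turns the $f_k$-dependent noise into a multiple of the merit gap and produces the displayed recursion.

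Next I would take $\mE[\cdot\mid \mF_{\barK}]$ on both sides (tower property, since $k-1\geq \barK$), rearrange to isolate $\tfrac{1}{4}\eta_{\min}\alpha_l\beta\,\mE[\|\nabla\L_k\|^2\mid\mF_{\barK}]$ on the left, multiply by $w_k$, and sum $k$ from $\barK+1$ to $\barK+K$. The key algebraic identity is $w_{k-1} = (1 + \Upsilon M_{g,1}\beta^2)\,w_k$, which makes $\sum_{k=\barK+1}^{\barK+K} w_{k-1}\mE[\L_{\barmu_{\barK}}^k - f_{\inf}\mid\mF_{\barK}] - \sum_{k=\barK+1}^{\barK+K} w_k\,\mE[\L_{\barmu_{\barK}}^{k+1} - f_{\inf}\mid\mF_{\barK}]$ telescope down to $w_{\barK}\,\mE[\L_{\barmu_{\barK}}^{\barK+1} - f_{\inf}\mid\mF_{\barK}] - w_{\barK+K}\,\mE[\L_{\barmu_{\barK}}^{\barK+K+1} - f_{\inf}\mid\mF_{\barK}]$. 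The first piece equals $w_{\barK}(\L_{\barmu_{\barK}}^{\barK+1} - f_{\inf})$ because $\L_{\barmu_{\barK}}^{\barK+1}$ is $\mF_{\barK}$-measurable, and the second piece is nonnegative and may be dropped. The remaining noise accumulates as $\Upsilon M_g\beta^2\sum_{k=\barK+1}^{\barK+K} w_k$, and dividing through by $\tfrac{1}{4}\eta_{\min}\alpha_l\beta\cdot\sum_{k=\barK+1}^{\barK+K} w_k$ yields the claim.

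The main obstacle is the constant-tracking in step one: the first-moment and second-moment contributions must collapse into a \emph{single} factor $\Upsilon\beta^2$ multiplying both $M_g$ and $M_{g,1}(f_k - f_{\inf})$, so that the recursion has the clean multiplicative form needed by the geometric weights. The assumption $M_g \geq 1$ is precisely what makes this possible — without it, the Jensen step on the first-moment term would leave a spurious additive $\sqrt{M_g}$ constant that is not absorbable into a coefficient of $M_g$ alone, breaking the telescoping structure. Once the drift inequality is in the right form, the geometric-sum manipulation is routine.
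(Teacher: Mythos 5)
Your proposal is correct and follows essentially the same route as the paper's proof: derive the one-step drift inequality $\mE_k[\L_{\barmu_{\barK}}^{k+1}-f_{\inf}]\leq(1+\Upsilon M_{g,1}\beta^2)(\L_{\barmu_{\barK}}^k-f_{\inf})-\tfrac{1}{4}\eta_{\min}\alpha_l\beta\|\nabla\L_k\|^2+\Upsilon M_g\beta^2$ from Lemma~\ref{lemma:Ful_cond2} and Lemma~\ref{lemma:4statements}, absorb the first-moment noise via Jensen and $M_g\geq 1$, and telescope against the geometric weights using $w_{k-1}=(1+\Upsilon M_{g,1}\beta^2)w_k$. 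Your constant tracking (collapsing both noise contributions into $\Upsilon\beta^2$) and the observation that $\L_{\barmu_{\barK}}^{\barK+1}$ is $\mF_{\barK}$-measurable match the paper's argument exactly.
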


\begin{proof}
From Lemma \ref{lemma:Ful_cond2} and Assumption \ref{ass:Ful_unbias}, we have for any $k\geq \barK+1$,
\begin{align*}
&\mE_k[\L_{\barmu_{\barK}}^{k+1}] \leq \L_{\barmu_{\barK}}^k -\frac{1}{4}\eta_{2,k}\alpha_k\|\nabla\L_k\|^2 +\left(\zeta+\eta_{1,k}\tau_k\right)\eta_{1,k}\alpha_k^2[M_g+M_{g,1}(f_k-f_{\inf})]\\
&\qquad\quad\qquad+\frac{\delta\kappa_c}{\sqrt{\kappa_{1,G}}}\alpha_k^2\sqrt{M_g+M_{g,1}(f_k-f_{\inf})}\\
& \hskip0.5cm \stackrel{\mathclap{\text{Lemma \ref{lemma:4statements}} } }{\leq} \;\;\; \L_{\barmu_{\barK}}^k-\frac{1}{4}\eta_{\min}\alpha_l\beta\|\nabla\L_k\|^2+\Upsilon\beta^2[M_g+M_{g,1}(f_k-f_{\inf})] \quad (\text{by } M_g\geq 1).
\end{align*}
\noindent Using the fact that $f_k-f_{\inf}\leq f_k-f_{\inf}+\barmu_{\barK}\|c_k\|=\L_{\barmu_{\barK}}^k-f_{\inf}$, we obtain 
\begin{equation*}
\hskip1cm \mE_k[\L_{\barmu_{\barK}}^{k+1}-f_{\inf}]  \leq  \left(1+\Upsilon M_{g,1}\beta^2\right)(\L_{\barmu_{\barK}}^k-f_{\inf}) -\frac{1}{4} \eta_{\min}\alpha_l \beta\|\nabla\L_k\|^2+\Upsilon M_g\beta^2.
\end{equation*}
\noindent Taking the expectation conditional on $\mF_{\barK}$ and rearranging the terms, we have 
\begin{equation*}
\mE_{\barK+1}[\|\nabla\L_k\|^2] \leq \frac{4(1+\Upsilon M_{g,1}\beta^2)}{\eta_{\min}\alpha_l\beta}\mE_{\barK+1}[\L_{\barmu_{\barK}}^k-f_{\inf}] - \frac{4}{\eta_{\min}\alpha_l\beta}\mE_{\barK+1}[\L_{\barmu_{\barK}}^{k+1}-f_{\inf}]+\frac{4 \Upsilon M_g}{\eta_{\min}\alpha_l}\beta.
\end{equation*}
\noindent Multiplying $w_k$ on both sides and summing over $k=\barK+1,\cdots,\barK+K$, we have 
\begin{multline*}
\mE_{\barK+1} \left[\frac{\sum_{k=\barK+1}^{\barK+K}w_k\|\nabla\L_k\|^2}{\sum_{k=\barK+1}^{\barK+K}w_k} \right] = \frac{\sum_{k=\barK+1}^{\barK+K}w_k\mE_{\barK+1}[\|\nabla\L_k\|^2]}{\sum_{k=\barK+1}^{\barK+K}w_k} \\
\leq \frac{4}{\eta_{\min}\alpha_l\beta}\cdot\frac{w_{\barK}(\L_{\barmu_{\barK}}^{\barK+1}-f_{\inf})-\mE_{\barK+1}[\L_{\barmu_{\barK}}^{\barK+K+1}-f_{\inf}]}{\sum_{k=\barK+1}^{\barK+K}w_k}+\frac{4\Upsilon M_{g}}{\eta_{\min}\alpha_l}\beta,
\end{multline*} 
\noindent where the first equality uses the fact that $\barK$ is fixed in the conditional expectation. Noting that $\mE_{\barK+1}[\L_{\barmu_{\barK}}^{\barK+K+1}-f_{\inf}]\geq 0$, we complete the proof.
\end{proof}

The following theorem follows from Lemma \ref{thm:Ful_bddvar_const}.

\begin{theorem}[Global convergence with constant $\beta_k$]\label{cor:cons_beta}
Suppose Assumptions~\ref{ass:1-1}, \ref{ass:stabilized_penalty}, and \ref{ass:Ful_unbias} hold and $\beta_k=\beta\in(0,\beta_{\max}]$, $\forall k\geq 0$. Let us define $w_k$ and $\Upsilon$ as in Lemma \ref{thm:Ful_bddvar_const}. We have\\
(a) when $M_{g,1}=0$, 
\begin{equation*}
\lim_{K\rightarrow\infty}\mE\left[\frac{1}{K}\sum_{k=\barK+1}^{\barK+K}\|\nabla\L_k\|^2\right]\leq \frac{4\Upsilon M_g}{\eta_{\min}\alpha_l}\beta;
\end{equation*} 
\noindent (b) when $M_{g,1}>0$, 
\begin{equation*}
\lim_{K\rightarrow\infty}\mE\left[\frac{1}{\sum_{k=\barK+1}^{\barK+K}w_k}\sum_{k=\barK+1}^{\barK+K}w_k\|\nabla\L_k\|^2\right] \leq\frac{4\Upsilon \{M_{g,1}\mE[\L_{\barmu_{\barK}}^{\barK+1}-f_{\inf}]+M_g\}}{\eta_{\min}\alpha_l}\beta. 
\end{equation*}		
\end{theorem}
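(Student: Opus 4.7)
The plan is to deduce both claims from Lemma \ref{thm:Ful_bddvar_const} by taking total expectation and then passing to the limit $K \to \infty$. The key observation is that the weights $w_k = (1+\Upsilon M_{g,1}\beta^2)^{\barK+K-k}$ depend on $k$ only through the offset $\barK + K - k$, so both $w_{\barK}$ and $\sum_{k=\barK+1}^{\barK+K} w_k$ are in fact deterministic functions of $K, \beta, \Upsilon, M_{g,1}$, independent of the (possibly random) threshold $\barK$. Applying the tower property to the conditional bound in Lemma \ref{thm:Ful_bddvar_const} then gives the unconditional inequality
\[
\mE\left[\frac{\sum_{k=\barK+1}^{\barK+K}w_k\|\nabla\L_k\|^2}{\sum_{k=\barK+1}^{\barK+K}w_k}\right] \leq \frac{4}{\eta_{\min}\alpha_l\beta}\cdot\frac{w_{\barK}\,\mE[\L_{\barmu_{\barK}}^{\barK+1}-f_{\inf}]}{\sum_{k=\barK+1}^{\barK+K}w_k}+\frac{4\Upsilon M_g}{\eta_{\min}\alpha_l}\beta,
\]
and the remaining task is to evaluate the first term on the right in each of the two regimes.

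For part (a), when $M_{g,1}=0$, the weights collapse to $w_k\equiv 1$, so $\sum w_k = K$, $w_{\barK}=1$, and the weighted average reduces to the uniform average. The first term on the right then has order $\mathcal{O}(1/K)$ and vanishes as $K\to\infty$, leaving only the desired $\mathcal{O}(\beta)$ residual. For part (b), I would set $r\coloneqq 1+\Upsilon M_{g,1}\beta^2>1$, compute the geometric sum $\sum_{k=\barK+1}^{\barK+K} w_k = \sum_{j=0}^{K-1}r^j = (r^K-1)/(r-1)$ together with $w_{\barK}=r^K$, and note that their ratio $r^K(r-1)/(r^K-1)$ converges to $r-1 = \Upsilon M_{g,1}\beta^2$ as $K\to\infty$. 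Multiplying by the prefactor $4/(\eta_{\min}\alpha_l\beta)$ cancels one factor of $\beta$ and leaves the claimed $\mathcal{O}(\beta)$ prefactor attached to $\mE[\L_{\barmu_{\barK}}^{\barK+1}-f_{\inf}]$; combining with the $M_g$ term yields precisely the stated expression.

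The main subtlety, rather than the calculations themselves, is verifying that $\mE[\L_{\barmu_{\barK}}^{\barK+1}-f_{\inf}]$ is finite so that the tower step above is meaningful and the stated bound in (b) is not vacuous. Here I would appeal to Assumption \ref{ass:stabilized_penalty} ($\barmu_{\barK}\leq \hatmu$) together with Assumption \ref{ass:1-1} ($\|c_k\|\leq\kappa_c$ and $\nabla f$ Lipschitz over $\Omega$) to bound $\L_{\barmu_{\barK}}^{\barK+1}$ along the iterate trajectory, using the cumulative reduction guaranteed by Lemmas \ref{lemma:ared_k}--\ref{lemma:Ful_cond2} to control any large excursion driven by the growth condition in Assumption \ref{ass:Ful_unbias}. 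Once finiteness is in hand, both limits follow by the routine geometric-sum manipulations sketched above, and no further analytical input beyond Lemma \ref{thm:Ful_bddvar_const} is required.
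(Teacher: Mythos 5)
Your outline reproduces the paper's computational core exactly: both parts follow from Lemma \ref{thm:Ful_bddvar_const} by evaluating the geometric sums $w_{\barK}=r^K$ and $\sum_{k=\barK+1}^{\barK+K}w_k=(r^K-1)/(r-1)$ with $r=1+\Upsilon M_{g,1}\beta^2$, noting $w_{\barK}/(w_{\barK}-1)\to 1$, and letting $K\to\infty$. Your observation that these weights are deterministic despite the random $\barK$ is correct and is implicitly what makes either route work.

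The genuine gap is in the order in which you take the expectation and the limit, and it bites in part (a). You apply the tower property first and then need $\tfrac{1}{K}\,\mE[\L_{\barmu_{\barK}}^{\barK+1}-f_{\inf}]\to 0$, which requires $\mE[\L_{\barmu_{\barK}}^{\barK+1}]<\infty$. The stated assumptions do not deliver this: Assumption \ref{ass:stabilized_penalty} only guarantees $\barK<\infty$ almost surely with no moment bound on $\barK$, and Assumption \ref{ass:1-1} bounds $\|\nabla f_k\|$ and $\|c_k\|$ but only bounds $f$ from \emph{below}. Your proposed remedy --- controlling the excursion via the per-step reductions of Lemmas \ref{lemma:ared_k}--\ref{lemma:Ful_cond2} --- at best bounds $\mE[f_{k+1}-f_k]$ by a constant per iteration, so it would yield $\mE[f_{\barK+1}]\lesssim f_0+C\,\mE[\barK]$ and still needs $\mE[\barK]<\infty$, which is not assumed. (Moreover, those lemmas only apply for $k\geq\barK+1$, so they say nothing about the excursion of $f$ before the merit parameter stabilizes.) The paper avoids this entirely by reversing the order: it first sends $K\to\infty$ \emph{inside} the conditional expectation, where $\L_{\barmu_{\barK}}^{\barK+1}-f_{\inf}$ is an almost surely finite random variable so the $\mathcal{O}(1/K)$ term vanishes pathwise, and then interchanges $\lim_K$ with the outer $\mE$ via (reverse) Fatou's lemma, justified by the uniform bound $\|\nabla\L_k\|^2\leq\kappa_{\nabla f}^2+\kappa_c^2$ (which holds because $\bnabla_\bx\L_k=P_k\barg_k$ with the least-squares multiplier, hence $\|\nabla_\bx\L_k\|\leq\|\nabla f_k\|$). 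For part (b) your route is harmless --- if $\mE[\L_{\barmu_{\barK}}^{\barK+1}-f_{\inf}]=\infty$ the stated bound is vacuous --- but for part (a) the conclusion must hold unconditionally, so the Fatou step is not a cosmetic choice but the device that closes the argument. Adopting it would repair your proof with no other changes.
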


\begin{proof}
(a) When $M_{g,1}=0$, we have $w_k=1$ for $\barK+1 \leq k\leq\barK+K$. From~Lemma \ref{thm:Ful_bddvar_const}, we have  
\begin{equation*}
\mE_{\barK+1}\left[\frac{1}{K}\sum_{k=\barK+1}^{\barK+K}\|\nabla\L_k\|^2\right]
\leq\frac{4}{\eta_{\min}\alpha_l\beta}\cdot\frac{\L_{\barmu_{\barK}}^{\barK+1}-f_{\inf}}{K}+\frac{4\Upsilon M_g}{\eta_{\min}\alpha_l}\beta.
\end{equation*}
Letting $K\rightarrow\infty$ and using the fact that $\|\nabla\L_k\|^2\leq \kappa_{\nabla f}^2+\kappa_c^2$ (cf. Assumption~\ref{ass:1-1}), we apply Fatou's lemma and have (the $\lim$ on the left can be strengthened to $\limsup$)  
\begin{equation*}
\lim_{K\rightarrow\infty}\mE\left[\frac{1}{K}\sum_{k=\barK+1}^{\barK+K}\|\nabla\L_k\|^2\right] \leq \mE\sbr{\limsup_{K\rightarrow\infty}\mE_{\barK+1}\left[\frac{1}{K}\sum_{k=\barK+1}^{\barK+K}\|\nabla\L_k\|^2\right] }\leq \frac{4\Upsilon M_g}{\eta_{\min}\alpha_l}\beta.
\end{equation*} 
\noindent(b) When $M_{g,1}>0$, we apply Lemma \ref{thm:Ful_bddvar_const} and the fact that $\sum_{k=\barK+1}^{\barK+K}w_k=(w_{\barK}-1)/(\Upsilon M_{g,1}\beta^2)$, and obtain  
\begin{equation*}
\mE_{\barK+1}\left[\frac{\sum_{k=\barK+1}^{\barK+K}w_k\|\nabla\L_k\|^2}{\sum_{k=\barK+1}^{\barK+K}w_k}\right]
\leq\frac{4\Upsilon M_{g,1}\beta}{\eta_{\min}\alpha_l}\cdot\frac{w_{\barK}(\L_{\barmu_{\barK}}^{\barK+1}-f_{\inf})}{w_{\barK}-1}+\frac{4\Upsilon M_g}{\eta_{\min}\alpha_l}\beta.
\end{equation*}
Since $w_{\barK}/(w_{\barK}-1) = (1+\Upsilon M_{g,1}\beta^2)^K/\{(1+\Upsilon M_{g,1}\beta^2)^K-1\}\rightarrow 1$ as $K\rightarrow\infty$, we apply Fatou's lemma and have (the $\lim$ on the left can be strengthened to $\limsup$)
\begin{multline*}
\lim_{K\rightarrow\infty}\mE\left[\frac{\sum_{k=\barK+1}^{\barK+K}w_k\|\nabla\L_k\|^2}{\sum_{k=\barK+1}^{\barK+K}w_k}\right]\leq \mE\sbr{\limsup_{K\rightarrow\infty} \mE_{\barK+1}\left[\frac{\sum_{k=\barK+1}^{\barK+K}w_k\|\nabla\L_k\|^2}{\sum_{k=\barK+1}^{\barK+K}w_k}\right]}\\
\leq \frac{4\Upsilon \{M_{g,1}\mE[\L_{\barmu_{\barK}}^{\barK+1}-f_{\inf}]+M_g\}}{\eta_{\min}\alpha_l}\beta.
\end{multline*} 
\noindent This completes the proof.
\end{proof}

From Theorem \ref{cor:cons_beta}, we note that the radius of the local neighborhood is proportional to $\beta$. Thus, to decrease the radius, one should choose a smaller $\beta$. However,~the trust-region radius is also proportional to $\beta$ (cf. \eqref{Ful_GenerateRadius}); thus, a smaller $\beta$ may result in a slow convergence. This suggests the existence of a trade-off between the convergence speed and convergence precision. 

For constant $\{\beta_k\}$, \cite{Berahas2021Stochastic,Berahas2021Sequential, Curtis2021Inexact, Curtis2020fully} established similar global results to Theorem \ref{cor:cons_beta}. However, our analysis has two major differences. (i) That line of literature~\mbox{required}~$\beta$~to be upper bounded by some complex quantities that may be less than 1, while we do not need such a condition. (ii) Compared to the stochastic trust-region method~for~unconstrained optimization \citep{Curtis2020fully}, our local neighborhood radius is proportional to the~input $\beta$ (i.e., we can control the radius by the input), while the one in \cite{Curtis2020fully} is independent~of~$\beta$.

Next, we consider decaying $\beta_k$. We show in the next lemma that, when $\sum\beta_k=\infty$ and $\sum\beta_k^2<\infty$, the infimum of KKT residuals converges to zero almost surely. Based on this result, we further show that the KKT residuals converge to zero almost surely.

\begin{lemma}\label{thm:Ful_bddvar_liminf}
Suppose Assumptions \ref{ass:1-1}, \ref{ass:stabilized_penalty}, and \ref{ass:Ful_unbias} hold, $\{\beta_k\}\subseteq (0,\beta_{\max}]$, and $\sum_{k=0}^{\infty}\beta_k=\infty$ and $\sum_{k=0}^{\infty}\beta_k^2<\infty$. We have  
\begin{equation*}
\liminf_{k\rightarrow\infty}\|\nabla \L_k\|=0\quad\text{almost surely}.
\end{equation*}	
\end{lemma}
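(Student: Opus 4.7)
The plan is to apply the Robbins–Siegmund supermartingale convergence theorem to the sequence $V_k := \L_{\barmu_{\barK}}^k - f_{\inf}$ (for $k \geq \barK+1$), starting from the per-iteration bound in Lemma \ref{lemma:Ful_cond2}. First I would combine that lemma with Assumption \ref{ass:Ful_unbias} and the boundedness of the control parameters from Lemma \ref{lemma:4statements}: the factors $\eta_{1,k},\eta_{2,k},\tau_k,\|B_k\|$ are uniformly bounded and $\alpha_k \in [\alpha_l\beta_k,\alpha_u\beta_k]$, so the two noise-dependent terms on the right of Lemma \ref{lemma:Ful_cond2} can be absorbed into $C_A\beta_k^2\,\mE_k[\|\nabla f_k-\barg_k\|^2]$ and $C_B\beta_k^2\,\mE_k[\|\nabla f_k-\barg_k\|]$.

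Next I would turn these noise terms into something linear in $V_k$ and a summable remainder. Using the growth condition $\mE_k[\|\barg_k-\nabla f_k\|^2]\leq M_g+M_{g,1}(f_k-f_{\inf})\leq M_g+M_{g,1}V_k$, together with Jensen's inequality for the first-moment term and the elementary bound $\sqrt{a+b}\leq \sqrt{a}+\sqrt{b}$ and $\sqrt{x}\leq 1+x$, one obtains
\begin{equation*}
\mE_k[V_{k+1}] \leq (1+C_1\beta_k^2)V_k - \tfrac{1}{4}\eta_{\min}\alpha_l\beta_k\|\nabla\mL_k\|^2 + C_2\beta_k^2
\end{equation*}
for some constants $C_1,C_2>0$. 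This is the central inequality of the argument; the step I expect to be the main technical obstacle is handling the $\sqrt{M_g+M_{g,1}(f_k-f_{\inf})}$ term arising from the growth condition, since it is the only piece that is genuinely sublinear in $V_k$ and has to be tamed carefully so that the coefficient of $V_k$ remains summable (this is where $M_g\geq 1$ and the quadratic factor $\beta_k^2$ are essential).

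With the inequality in hand, Robbins–Siegmund applies with $a_k=C_1\beta_k^2$ and $c_k=C_2\beta_k^2$, both summable by hypothesis, and $b_k=\tfrac{1}{4}\eta_{\min}\alpha_l\beta_k\|\nabla\mL_k\|^2\geq 0$. The theorem then yields that $V_k$ converges almost surely to a finite random variable and that
\begin{equation*}
\sum_{k=\barK+1}^{\infty}\beta_k\|\nabla\mL_k\|^2 < \infty \quad \text{almost surely.}
\end{equation*}

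Finally I would combine this summability with $\sum\beta_k=\infty$ by a contradiction argument: if on an event of positive probability $\liminf_{k\to\infty}\|\nabla\mL_k\|^2\geq\epsilon>0$, then eventually $\|\nabla\mL_k\|^2\geq\epsilon/2$, so the tail sum $\sum\beta_k\|\nabla\mL_k\|^2$ would be at least $(\epsilon/2)\sum\beta_k=\infty$, contradicting the conclusion of Robbins–Siegmund. Hence $\liminf_{k\to\infty}\|\nabla\mL_k\|=0$ almost surely, as claimed.
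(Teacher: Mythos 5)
Your proposal is correct and follows essentially the same route as the paper: both derive the recursion $\mE_k[V_{k+1}]\leq(1+C_1\beta_k^2)V_k-\tfrac{1}{4}\eta_{\min}\alpha_l\beta_k\|\nabla\L_k\|^2+C_2\beta_k^2$ from Lemma \ref{lemma:Ful_cond2} (handling the square-root noise term via $M_g\geq 1$), invoke Robbins--Siegmund to get $\sum_k\beta_k\|\nabla\L_k\|^2<\infty$ almost surely, and conclude from $\sum_k\beta_k=\infty$. The only cosmetic difference is that the paper phrases the conclusion by first conditioning on $\mF_{\barK}$ (to account for the random threshold $\barK$) and then integrating out, whereas you state the final step as an explicit contradiction; both are valid.
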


\begin{proof}
From the proof of Lemma \ref{thm:Ful_bddvar_const}, we have for any $\forall k\geq \barK+1$ that 
\begin{equation*}
\mE_k[\L_{\barmu_{\barK}}^{k+1}-f_{\inf}] \leq  \left(1+\Upsilon M_{g,1}\beta_k^2\right)(\L_{\barmu_{\barK}}^k-f_{\inf})-\frac{1}{4}\eta_{\min}\alpha_l\beta_k\|\nabla\L_k\|^2+\Upsilon M_g\beta_k^2.
\end{equation*}
Since $\L_{\barmu}(\bx)-f_{\inf}$ is bounded below by zero, $\eta_{\min}\alpha_l\beta_k\|\nabla\L_k\|^2>0$, and $\sum_{k=\barK+1}^\infty \beta_k^2<\infty$, it immediately follows from the Robbins-Siegmund theorem \citep{Robbins1971Convergence} that
\begin{equation}\label{pequ:4}
\sup_{k\geq\barK+1}\mE_{\barK+1}[\L_{\barmu_{\barK}}^k-f_{\inf}]\coloneqq M_{\barK}<\infty,\;\;\;\;\quad \sum_{k=\barK+1}^{\infty}\beta_k\mE_{\barK+1}[\|\nabla \L_k\|^2]  < \infty.
\end{equation}
\noindent The latter part suggests that $P[\sum_{k=\barK+1}^{\infty}\beta_k\|\nabla\mL_k\|^2<\infty \mid \mF_{\barK}]=1$. Since the result holds for any $\mF_{\barK}$, we have $P[\sum_{k=\barK+1}^{\infty}\beta_k\|\nabla\mL_k\|^2<\infty]=1$. Noting that $\sum_{k=\barK+1}^{\infty}\beta_k =\infty$ for any run of the algorithm, we complete the proof.
\end{proof}

Finally, we establish the global convergence theorem for decaying $\beta_k$ sequence.

\begin{theorem}[Global convergence with decaying $\beta_k$]\label{thm:Ful_bddvar_limit}
Suppose Assumptions~\ref{ass:1-1}, \ref{ass:stabilized_penalty}, and \ref{ass:Ful_unbias} hold, $\{\beta_k\}\subseteq (0,\beta_{\max}]$, and $\sum_{k=0}^{\infty}\beta_k=\infty$ and $\sum_{k=0}^{\infty}\beta_k^2<\infty$. We have
\begin{equation*}
\lim_{k\rightarrow\infty}\|\nabla \L_k\|=0\quad\text{almost surely}.
\end{equation*}
\end{theorem}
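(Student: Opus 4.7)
The plan is to upgrade Lemma~\ref{thm:Ful_bddvar_liminf} to a full almost sure limit by a classical oscillation argument, where the new ingredient needed for our stochastic setting is a martingale decomposition of the step length that absorbs the unbounded gradient noise appearing in the trust-region radius. I would first collect three almost sure facts from (or just below) the proof of Lemma~\ref{thm:Ful_bddvar_liminf}: (i) $\sum_k \beta_k\|\nabla\L_k\|^2 < \infty$; (ii) the full Robbins--Siegmund conclusion gives that $\L_{\barmu_{\barK}}^k - f_{\inf}$ converges almost surely, and iterating the recursion of Lemma~\ref{lemma:Ful_cond2} in expectation under $\sum_k\beta_k^2<\infty$ also yields $\sup_k \mE[\L_{\barmu_{\barK}}^k - f_{\inf}] < \infty$; and (iii) $\liminf_k\|\nabla\L_k\| = 0$. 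I would also record that, under Assumption~\ref{ass:1-1}, the deterministic map $\bx \mapsto (P(\bx)\nabla f(\bx),c(\bx))$ is Lipschitz on $\Omega$ with some constant $L_{\L}$ (since $\nabla f$, $G$, $c$ are Lipschitz, $\|\nabla f\|$ is bounded, and $GG^T \succeq \kappa_{1,G}I$ makes the projector $P$ Lipschitz), so $\bigl|\|\nabla\L_{k+1}\|-\|\nabla\L_k\|\bigr| \le L_{\L}\|\Delta\bx_k\|$.

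Next, argue by contradiction: suppose $\mP(A) > 0$ where $A = \{\limsup_k\|\nabla\L_k\| > 2\varepsilon\}$ for some $\varepsilon>0$. Intersecting $A$ with the a.s. events in (i)--(iii), define crossings $l_j = \min\{k>u_{j-1}:\|\nabla\L_k\|\ge 2\varepsilon\}$ and $u_j = \min\{k>l_j:\|\nabla\L_k\|<\varepsilon\}$, both finite infinitely often. Throughout $l_j\le k<u_j$ we have $\|\nabla\L_k\|\ge\varepsilon$, and Lipschitzness gives
\begin{equation*}
\varepsilon \;\le\; \|\nabla\L_{l_j}\| - \|\nabla\L_{u_j}\| \;\le\; L_{\L}\sum_{k=l_j}^{u_j-1}\|\Delta\bx_k\|.
\end{equation*}

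The main obstacle is that $\|\Delta\bx_k\| \le \Delta_k$ is not deterministically $O(\beta_k)$: by \eqref{Ful_GenerateRadius} the radius scales with $\|\bnabla\L_k\|$, which inherits the unbounded noise in $\barg_k$. I would resolve this by decomposing $\|\Delta\bx_k\|$ into its one-step predictable mean and a martingale-difference. Using \eqref{Ful_GenerateRadius}, Lemma~\ref{lemma:4statements}, and Assumption~\ref{ass:Ful_unbias}, a case analysis gives $\Delta_k \le \alpha_k + \eta_{\max}\alpha_k\|\bnabla\L_k\|$, so
\begin{equation*}
\mE_k[\|\Delta\bx_k\|] \;\le\; \mE_k[\Delta_k] \;\le\; C\beta_k\Bigl(1 + \|\nabla\L_k\| + \sqrt{M_g + M_{g,1}(\L_{\barmu_{\barK}}^k - f_{\inf})}\Bigr),
\end{equation*}
and similarly $\mE_k[\|\Delta\bx_k\|^2] \le \mE_k[\Delta_k^2] \le O(\beta_k^2)\bigl(1 + \|\nabla\L_k\|^2 + M_{g,1}(\L_{\barmu_{\barK}}^k - f_{\inf})\bigr)$. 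The latter has $O(\beta_k^2)$ unconditional expectation thanks to fact (ii), so the martingale $M_k = \sum_{i\le k}(\|\Delta\bx_i\|-\mE_i[\|\Delta\bx_i\|])$ has $\sum_k \mE[(M_k-M_{k-1})^2] = O\bigl(\sum_k\beta_k^2\bigr) < \infty$ and converges almost surely. Hence $M_{u_j-1}-M_{l_j-1}\to 0$ as $j\to\infty$, and almost surely on $A$ there is a sample-path dependent $C'<\infty$ with
\begin{equation*}
\sum_{k=l_j}^{u_j-1}\|\Delta\bx_k\| \;\le\; C'\sum_{k=l_j}^{u_j-1}\beta_k + \frac{\varepsilon}{2L_{\L}} \qquad\text{for all sufficiently large }j.
\end{equation*}

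Combining with the lower bound from Lipschitzness forces $\sum_{k=l_j}^{u_j-1}\beta_k \ge \varepsilon/(2L_{\L}C')$ for all large $j$, so summing over the infinitely many crossings gives $\sum_j\sum_{k=l_j}^{u_j-1}\beta_k = \infty$ a.s.\ on $A$. On the other hand, since $\|\nabla\L_k\|\ge\varepsilon$ throughout each crossing,
\begin{equation*}
\sum_j\sum_{k=l_j}^{u_j-1}\beta_k \;\le\; \frac{1}{\varepsilon^2}\sum_{k=\barK+1}^{\infty}\beta_k\|\nabla\L_k\|^2 \;<\; \infty
\end{equation*}
almost surely, a contradiction. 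Hence $\mP(A)=0$ for every $\varepsilon>0$, which together with Lemma~\ref{thm:Ful_bddvar_liminf} yields $\lim_k\|\nabla\L_k\|=0$ almost surely. The two places that need care are (a) the Lipschitz property of $\nabla\L(\bx)$ on $\Omega$, which relies on uniform nonsingularity $GG^T\succeq\kappa_{1,G}I$, and (b) the martingale decomposition of $\|\Delta\bx_k\|$, which is the genuinely new ingredient beyond the deterministic trust-region template and is exactly what disposes of the unbounded noise in $\bnabla\L_k$ that an $L^\infty$ Lipschitz-only argument cannot handle.
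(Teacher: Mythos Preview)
Your proposal is sound and shares the paper's crossing/oscillation skeleton, but you handle the noise in $\|\Delta\bx_k\|$ by a genuinely different device. The paper does not set up a martingale: it bounds $\Delta_k\le \eta_{\max}\alpha_u\beta_k\|\bnabla\L_k\|$, splits $\|\bnabla\L_k\|\le\|\nabla\L_k\|+\|\barg_k-\nabla f_k\|$ (and the square likewise), and then \emph{multiplies the whole chain by $\epsilon$}, which over a crossing turns every $\beta_k\|\nabla\L_k\|$ into at most $\beta_k\|\nabla\L_k\|^2$. This yields
\[
\epsilon^2 \;\le\; C_1\sum_{k=m_i}^{n_i-1}\beta_k\|\nabla\L_k\|^2 \;+\; C_2\sum_{k=m_i}^{n_i-1}\beta_k\bigl(\|\barg_k-\nabla f_k\|+\|\barg_k-\nabla f_k\|^2\bigr).
\]
The first sum vanishes by $\sum_k\beta_k\|\nabla\L_k\|^2<\infty$. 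For the second, the paper uses that $\sum_{k\in\K_\epsilon}\beta_k<\infty$ (with $\K_\epsilon=\{k:\|\nabla\L_k\|\ge\epsilon\}$, an immediate corollary of the same sum) together with the moment bound in Assumption~\ref{ass:Ful_unbias} to show the total noise over all crossings has finite conditional expectation, hence its tails over successive crossings go to zero a.s. Your martingale route is more modular and leans on the Robbins--Siegmund \emph{convergence} of $\L_{\barmu_{\barK}}^k-f_{\inf}$ (which the paper never invokes); the paper's ``multiply by $\epsilon$'' trick is more elementary and avoids any square-integrable martingale machinery.

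One correction: under Assumption~\ref{ass:1-1} the bounds $\|\nabla f_k\|\le\kappa_{\nabla f}$ and $\kappa_{1,G}I\preceq G_kG_k^T\preceq\kappa_{2,G}I$ hold \emph{only at the iterates}, not on all of $\Omega$, so your claim that $c$ and $P$ are globally Lipschitz on $\Omega$ is not justified as stated. The paper accordingly works with the between-iterate estimate $\|\nabla\L_{k+1}-\nabla\L_k\|\le L_{\nabla\L}(\|\Delta\bx_k\|+\|\Delta\bx_k\|^2)$. This does not break your argument---the extra $\|\Delta\bx_k\|^2$ term is even easier to absorb since $\sum_k\mE_k[\|\Delta\bx_k\|^2]=O(\sum_k\beta_k^2)<\infty$ a.s.---but point~(a) of your write-up should be amended to this weaker form.
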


\begin{proof}
For any run of the algorithm, suppose the statement does not hold, then~we have $\limsup_{k\rightarrow\infty}\|\nabla\L_k\|\geq 2\epsilon$ for some $\epsilon>0$. For such a run, let us define the~set~$\K_{\epsilon} \coloneqq \{k\geq \barK+1: \|\nabla\L_k\|\geq \epsilon\}$. By Lemma \ref{thm:Ful_bddvar_liminf}, there exist two infinite index sets $\{m_i\}$, $\{n_i\}$ with $\barK< m_i<n_i$, $\forall i\geq 0$, such that 
\begin{equation}\label{pequ:5}
\|\nabla \L_{m_i}\|\geq 2 \epsilon,\quad  \|\nabla \L_{n_i}\|<\epsilon, \quad  \|\nabla \L_k\|\geq\epsilon \text{ for } k\in\{m_i+1,\cdots,n_i-1\}.
\end{equation}  
\noindent By Assumption \ref{ass:1-1} and the definition $\nabla \L_k = (P_k\nabla f_k, c_k)$, there exists $L_{\nabla \L}>0$ such that $\|\nabla\L_{k+1}-\nabla \L_k\|\leq L_{\nabla\L}\{\|\bx_{k+1}-\bx_k\| + \|\bx_{k+1}-\bx_k\|^2\}$. Thus, \eqref{pequ:5} implies  
\begin{align*}
\epsilon & \leq \|\nabla\L_{m_i}\| - \|\nabla\L_{n_i}\| \leq \| \nabla\L_{n_i} - \nabla\L_{m_i}\| \leq \sum_{k=m_i}^{n_i-1}\|\nabla\L_{k+1} - \nabla\L_k\|\\
& \leq L_{\nabla\L}\sum_{k=m_i}^{n_i-1} \{\|\bx_{k+1}-\bx_k\| + \|\bx_{k+1}-\bx_k\|^2\}\leq L_{\nabla\L}\sum_{k=m_i}^{n_i-1} (\Delta_k + \Delta_k^2)\\
& \stackrel{\mathclap{\eqref{Ful_GenerateRadius}}}{\leq} L_{\nabla\L}\sum_{k=m_i}^{n_i-1} (\eta_{\max}\alpha_u\beta_k\|\bar{\nabla}\L_k\| + \eta_{\max}^2\alpha_u^2\beta_k^2\|\bar{\nabla}\L_k\|^2)\quad(\text{also by Lemma }\ref{lemma:4statements}).
\end{align*} 
\noindent Since $\|\bar{\nabla}\L_k\|\leq \|\nabla\L_k\| + \|\barg_k - \nabla f_k\|$, $\|\bar{\nabla}\L_k\|^2 \leq 2(\|\nabla\L_k\|^2 + \|\barg_k - \nabla f_k\|^2)$ and~$\beta_k\leq\beta_{\max}$, we have  
\begin{align*}
\epsilon & \leq L_{\nabla\L}\eta_{\max}\alpha_u\sum_{k=m_i}^{n_i-1} \beta_k\|\nabla\L_k\|+ 2L_{\nabla\L}\eta_{\max}^2\alpha_u^2\beta_{\max}\sum_{k=m_i}^{n_i-1} \beta_k\|\nabla\L_k\|^2\\
&\quad + L_{\nabla\L}\eta_{\max}\alpha_u\sum_{k=m_i}^{n_i-1} \beta_k\|\barg_k - \nabla f_k\| + 2L_{\nabla\L}\eta_{\max}^2\alpha_u^2\beta_{\max}\sum_{k=m_i}^{n_i-1}\beta_k \|\barg_k-\nabla f_k\|^2.
\end{align*} 
\noindent Multiplying $\epsilon$ on both sides and using $\|\nabla \L_k\|\geq\epsilon$ for $k\in\{m_i,\cdots,n_i-1\}$, we have  
\begin{multline}\label{pequ:6}
\epsilon^2 \leq \{L_{\nabla\L}\eta_{\max}\alpha_u+ 2\epsilon L_{\nabla\L}\eta_{\max}^2\alpha_u^2\beta_{\max}\}\sum_{k=m_i}^{n_i-1} \beta_k\|\nabla\L_k\|^2 \\
+ \cbr{\epsilon L_{\nabla\L}\eta_{\max}\alpha_u + 2\epsilon L_{\nabla\L}\eta_{\max}^2\alpha_u^2\beta_{\max}}\sum_{k=m_i}^{n_i-1} \beta_k\rbr{\|\barg_k - \nabla f_k\| + \|\barg_k-\nabla f_k\|^2}.
\end{multline} 
\noindent For sake of contradiction, we will show that the right-hand-side of the above expression converges to zero as $i\rightarrow\infty$. By \eqref{pequ:4}, we know that $\infty > \sum_{k=\barK+1}^{\infty}\beta_k\|\nabla\L_k\|^2 \geq \sum_{i=0}^\infty\sum_{k=m_i}^{n_i-1} \beta_k\|\nabla\L_k\|^2$. Thus, $\sum_{k=m_i}^{n_i-1} \beta_k\|\nabla\L_k\|^2\rightarrow 0$ as $i\rightarrow\infty$. For the \mbox{second}~term, we note that 
{\allowdisplaybreaks\begin{align*}
&\sum_{i=0}^{\infty} \mE_{\barK+1}  \left[\sum_{k=m_i}^{n_i-1} \beta_k(\|\barg_k -  \nabla f_k\|+\|\barg_k -  \nabla f_k\|^2)\right] \\
& \qquad  = \sum_{i=0}^{\infty}\sum_{k=m_i}^{n_i-1} \beta_k\mE_{\barK+1}[\|\barg_k - \nabla f_k\| + \|\barg_k - \nabla f_k\|^2]\\
& \qquad \leq 2\sum_{i= 0}^{\infty}\sum_{k=m_i}^{n_i-1} \beta_k(M_g+M_{g,1}\mE_{\barK+1}[f_k-f_{\inf}]) \stackrel{\mathclap{\eqref{pequ:4}}}{\leq }2(M_g+M_{g,1}M_{\barK})\sum_{i= 0}^{\infty}\sum_{k=m_i}^{n_i-1} \beta_k.
\end{align*}} 
\noindent By the definition of $\K_{\epsilon}$ and \eqref{pequ:4}, we have $\sum_{i= 0}^{\infty}\sum_{k=m_i}^{n_i-1} \beta_k\leq \sum_{k\in \K_{\epsilon}}\beta_k<\infty$.~We apply Borel-Cantelli lemma, integrate out the randomness of $\mF_{\barK}$, and have $\sum_{k=m_i}^{n_i-1} \beta_k(\|\barg_k - \nabla f_k\|+\|\barg_k - \nabla f_k\|^2)\rightarrow 0$ as $i\rightarrow\infty$ almost surely. Thus, the right-hand-side of \eqref{pequ:6} converges to zero, which leads to the contradiction and completes the proof.
\end{proof}

Our almost sure convergence result matches the ones in \cite{Na2022adaptive, Na2021Inequality} established for stochastic line search methods in constrained optimization, and matches the one in \cite{Curtis2020fully} established for stochastic trust-region method in unconstrained optimization. Compared to \cite{Curtis2020fully} (cf. Assumption 4.4 there), we do not assume the variance of the~gradient estimates decays as $\beta_k$. Such an assumption violates the flavor of fully stochastic methods, since a batch of samples is required per iteration with the batch size goes to infinity. On the contrary, we assume a growth condition (cf. Assumption \ref{ass:Ful_unbias}), which is weaker than the usual bounded variance condition. We should also mention that,~if one applies the result of \cite[Lemma 4.5]{Curtis2020fully}, one may be able to show almost sure convergence for decaying $\beta_k$ without requiring decaying variance as in the context~of~\cite{Curtis2020fully}. However, a new concern arises --- one needs to rescale the Hessian matrix~at each~step, which modifies the curvature information and affects the convergence speed.

\subsection{Merit parameter behavior}\label{sec:penalty}

In this subsection, we study the behavior of the merit parameter. We revisit Assumption \ref{ass:stabilized_penalty} and show that it is satisfied provided $\barg_k$ is upper bounded and $\|B_k\|$ is bounded away from zero. The condition on $\barg_k$ can be satisfied if the gradient noise has a bounded support (e.g., sampling from an empirical distribution). Such an assumption is standard to ensure a stabilized~merit parameter for both deterministic and stochastic SQP methods \citep{Bertsekas1982Constrained, Berahas2021Sequential, Berahas2021Stochastic, Berahas2022Accelerating, Curtis2021Inexact, Na2022adaptive, Na2021Inequality}.~ We~should~mention that this line of literature only assumed the existence of an upper bound on the gradient noise, which can be unknown. In other words, the bound is not involved~in~the algorithm design. In comparison, \cite{Sun2023trust} explored a bounded noise condition and incorporated the bound into the design of a trust-region algorithm. Certainly, our almost~sure convergence result also differs from the one in \cite{Sun2023trust}, which showed the iterates visited a neighborhood of stationarity infinitely often. 

Furthermore, a non-vanishing $\|B_k\|$ is a fairly mild condition, naturally \mbox{satisfied}~by all the reasonable construction methods that one uses in SQP algorithms (e.g., set $B_k$ as identity, estimated Hessian, averaged Hessian, or quasi-Newton update). However, a non-vanishing spectrum of $B_k$ is technically necessary due to our radius decomposition with the rescaled residuals (cf. \eqref{eq:breve and tilde_delta_k}). We note that a vanishing spectrum~leads~to $\breve{\Delta}_k\rightarrow0$, leading to a diminishing normal step $\bw_k$ even if we have a large feasibility residual.~The~lower~bound on $\|B_k\|$ is removable if we use original unscaled~\mbox{residuals}~to decompose the radius, or~use the alternative decomposition technique in Remark \ref{rem:3}(ii); however, an additional tuning parameter $\theta$ to balance the feasibility and optimality residuals is introduced there. We provide the analysis in Appendix \ref{appendix:A} for the sake of completeness.

\begin{assumption}\label{ass:bound_error}
For all $k\geq 0$, (i) there is a constant $M_1>0$ such that~$\|\barg_k - \nabla f_k\|\leq M_1$; and
(ii) there is a constant $\kappa_B>0$ such that $1/\kappa_B\leq\|B_k\|\leq \kappa_B$.  
\end{assumption}

\begin{lemma}\label{lemma:predk}

Suppose Assumptions \ref{ass:1-1} and \ref{ass:bound_error} hold. There exist a (potentially random) $\barK<\infty$ and a deterministic constant $\hat{\mu}$, such that $\barmu_k=\barmu_{\barK}\leq \hatmu$, $\forall k>\barK$. 
\end{lemma}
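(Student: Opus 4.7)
The plan is to exhibit a deterministic threshold $\hat\mu^{\ast}$ with the property that $\barmu_k \geq \hat\mu^{\ast}$ automatically forces the termination criterion \eqref{eq:upper_bound_predk} of the inner while loop to hold. Since the merit parameter is monotone nondecreasing, lives on the discrete grid $\{\rho^j\barmu_{-1}\}_{j\geq 0}$, and can only fail to terminate while it is strictly below $\hat\mu^{\ast}$, it must stabilize at some random but finite iteration $\barK$ at a value bounded by $\hat\mu \coloneqq \max\{\barmu_{-1},\rho\hat\mu^{\ast}\}$.

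To compute the threshold, I would first rewrite $\text{Pred}_k$ in terms of the decomposition $\Delta\bx_k = \bw_k + P_k\bu_k$. Two algebraic observations simplify things: (i) $P_k\barg_k = \bnabla_\bx\mL_k$ lies in $\ker(G_k)$ while $\bw_k \in \mathrm{im}(G_k^T)=\ker(G_k)^{\perp}$, so $\bnabla_\bx\mL_k^T\bw_k = 0$; and (ii) from $\barg_k = \bnabla_\bx\mL_k - G_k^T\barblambda_k$ together with $G_k\bw_k = -\bargamma_k c_k$, we get $\barg_k^T\bw_k = \bargamma_k\barblambda_k^Tc_k$. The tangential block $\barg_k^T P_k\bu_k + \tfrac12(P_k\bu_k)^TB_k(P_k\bu_k)$ equals $m(\bu_k)$ and is controlled by Lemma~\ref{lemma:Ful_cauchy}; the cross terms with $B_k$ yield $\tfrac12\|B_k\|\breve\Delta_k^2 + \|B_k\|\breve\Delta_k\tilde\Delta_k$ via Cauchy--Schwarz; and \eqref{eq:constraint_violation} handles the merit term. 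Combining these gives
\begin{equation*}
\text{Pred}_k \leq \bargamma_k\|\barblambda_k\|\|c_k\| -\|\bnabla_\bx\mL_k\|\tilde\Delta_k +\tfrac12\|B_k\|\tilde\Delta_k^2 +\tfrac12\|B_k\|\breve\Delta_k^2 +\|B_k\|\breve\Delta_k\tilde\Delta_k -\barmu_k\bargamma_k\|c_k\|,
\end{equation*}
so a sufficient condition for \eqref{eq:upper_bound_predk} is $\barmu_k\bargamma_k\|c_k\| \geq \bargamma_k\|\barblambda_k\|\|c_k\| + \tfrac12\|B_k\|\breve\Delta_k^2 + \tfrac12\|c_k\|\breve\Delta_k$.

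The next step is to show the right-hand side divided by $\bargamma_k\|c_k\|$ is uniformly bounded by a deterministic constant. The case $c_k=\b0$ is vacuous. When $\bargamma_k = 1$, the inequality becomes $\barmu_k \geq \|\barblambda_k\| + \tfrac12\|B_k\|\breve\Delta_k^2/\|c_k\| + \tfrac12\breve\Delta_k$; when $\bargamma_k = \breve\Delta_k/\|\bv_k\|<1$, it becomes $\barmu_k \geq \|\barblambda_k\| + \tfrac12\|B_k\|\breve\Delta_k\|\bv_k\|/\|c_k\| + \tfrac12\|\bv_k\|$. I would then dominate each piece: the identity $\breve\Delta_k/\|c_k\| = \Delta_k/\|\bnabla\mL_k\|$ combined with \eqref{Ful_GenerateRadius} and Lemma~\ref{lemma:4statements} gives $\breve\Delta_k/\|c_k\|\leq \eta_{\max}\alpha_u\beta_{\max}$; the bound $\|\bv_k\|\leq \|c_k\|/\sqrt{\kappa_{1,G}}$ follows from Assumption~\ref{ass:1-1}; under Assumption~\ref{ass:bound_error}, $\|\barg_k\|\leq \kappa_{\nabla f}+M_1$ yields $\|\barblambda_k\|\leq (\kappa_{\nabla f}+M_1)/\sqrt{\kappa_{1,G}}$; and $\|B_k\|\leq\kappa_B$, while $\breve\Delta_k\leq\Delta_k$ is controlled by a uniform bound on $\|\bnabla\mL_k\|$. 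Every ingredient is thus deterministic, producing $\hat\mu^{\ast}$.

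The final step is combinatorial: since $\barmu_k$ is nondecreasing on $\{\rho^j\barmu_{-1}\}_{j\geq 0}$, a doubling from a value strictly below $\hat\mu^{\ast}$ cannot overshoot $\rho\hat\mu^{\ast}$, hence $\barmu_k\leq\hat\mu$ always. Because only finitely many values of the grid lie in $[\barmu_{-1},\hat\mu]$, the (nondecreasing) sequence must stabilize at some random but finite $\barK$. The main obstacle I anticipate is the case analysis on $\bargamma_k$: it must be handled carefully so that, after dividing by $\bargamma_k\|c_k\|$, the residual ratios $\breve\Delta_k/\|c_k\|$ and $\|\bv_k\|/\|c_k\|$ both collapse to deterministic constants; once those ratios are controlled via \eqref{Ful_GenerateRadius}, Lemma~\ref{lemma:4statements}, and Assumption~\ref{ass:1-1}, the remainder is a straightforward uniform-boundedness argument powered by Assumption~\ref{ass:bound_error}.
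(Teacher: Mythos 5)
Your proposal is correct and follows essentially the same route as the paper's proof: decompose $\text{Pred}_k$ along $\Delta\bx_k=\bargamma_k\bv_k+P_k\bu_k$, absorb the tangential block via Lemma \ref{lemma:Ful_cauchy} and the merit term via \eqref{eq:constraint_violation}, bound the normal-step linear term using the boundedness of $\barg_k$ (your $\bargamma_k\barblambda_k^Tc_k$ is literally the paper's $\bargamma_k\barg_k^T\bv_k$), split on $\bargamma_k=1$ versus $\bargamma_k<1$ to get a deterministic threshold, and finish with the $\rho$-grid stabilization argument. The only differences are cosmetic constants and the order in which the Cauchy-decrease bound is applied.
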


\begin{proof}
It suffices to show that there exists a deterministic threshold $\tilde{\mu}>0$ independent of $k$ such that \eqref{eq:upper_bound_predk} is satisfied as long as $\barmu_k\geq \tilde{\mu}$. We have
{\allowdisplaybreaks
\begin{align*}
\text{Pred}_k & \stackrel{\mathclap{\eqref{eq:Ful_Pred_k}}}{=}\; \barg_k^T\Delta\bx_k+\frac{1}{2}\Delta\bx_k^TB_k\Delta\bx_k+\barmu_k(\|c_k+G_k\Delta\bx_k\|-\|c_k\|)\\
& \stackrel{\mathclap{\eqref{eq:constraint_violation}}}{=} \; 
\barg_k^TZ_k\bu_k+\bargamma_k(\barg_k-\nabla f_k)^T\bv_k + \bargamma_k\nabla f_k^T\bv_k +\frac{1}{2}\bu_k^TZ_k^TB_kZ_k\bu_k+\bargamma_k\bv_k^TB_kZ_k\bu_k\\
& \quad  +\frac{1}{2}\bargamma_k^2\bv_k^TB_k\bv_k-\barmu_k\bargamma_k\|c_k\| \quad (\text{also use } \Delta\bx_k = \bargamma_k\bv_k + Z_k\bu_k)\\
& \leq (\barg_k+\bargamma_kB_k\bv_k)^TZ_k\bu_k + \frac{1}{2}\bu_k^TZ_k^TB_kZ_k\bu_k +\bargamma_k(M_1+\kappa_{\nabla f})\|\bv_k\| \\
&\quad + \frac{1}{2}\bargamma_k\|B_k\|\|\bv_k\|^2 -\barmu_k\bargamma_k\|c_k\| \quad (\text{by Assumptions \ref{ass:1-1}, \ref{ass:bound_error}}\text{ and }\bargamma_k\leq 1).
\end{align*}}
\noindent From \eqref{eq:Ful_Cauchy_2}, and replacing $\nabla \L_k$ by its stochastic estimate, we have 
\begin{align*}
\text{Pred}_k & \leq -\|\bar{\nabla}_{\bx}\L_k+\bargamma_kP_kB_k\bv_k\|\tilde{\Delta}_k+\frac{1}{2}\|B_k\|\tilde{\Delta}_k^2 +\bargamma_k(M_1+\kappa_{\nabla f})\|\bv_k\| \\
&\quad + \frac{1}{2}\bargamma_k\|B_k\|\|\bv_k\|^2 -\barmu_k\bargamma_k\|c_k\| \\
& \leq -\|\bar{\nabla}_{\bx}\L_k\|\tilde{\Delta}_k+\bargamma_k\|B_k\|\|\bv_k\|\tilde{\Delta}_k+\frac{1}{2}\|B_k\|\tilde{\Delta}_k^2 +\bargamma_k(M_1+\kappa_{\nabla f})\|\bv_k\| \\
&\quad + \frac{1}{2}\bargamma_k\|B_k\|\|\bv_k\|^2 -\barmu_k\bargamma_k\|c_k\| \quad(\text{by triangular inequality and }\|P_k\|\leq 1)\\
& \leq -\|\bar{\nabla}_{\bx}\L_k\|\Delta_k+\|\bar{\nabla}_{\bx}\L_k\|\breve{\Delta}_k+\bargamma_k\|B_k\|\|\bv_k\|\tilde{\Delta}_k+\frac{1}{2}\|B_k\|\tilde{\Delta}_k^2\\
& \quad  +\bargamma_k(M_1+\kappa_{\nabla f})\|\bv_k\| + \frac{1}{2}\bargamma_k\|B_k\|\|\bv_k\|^2 -\barmu_k\bargamma_k\|c_k\| \quad(\text{since }\tilde{\Delta}_k\geq \Delta_k-\breve{\Delta}_k)\\
& = -\|\bar{\nabla}_{\bx}\L_k\|\Delta_k-\|c_k\|\Delta_k+\|c_k\|\Delta_k+\|\bar{\nabla}_{\bx}\L_k\|\breve{\Delta}_k+\bargamma_k\|B_k\|\|\bv_k\|\tilde{\Delta}_k\\
& \quad  +\frac{1}{2}\|B_k\|\tilde{\Delta}_k^2+\bargamma_k(M_1+\kappa_{\nabla f})\|\bv_k\| + \frac{1}{2}\bargamma_k\|B_k\|\|\bv_k\|^2 -\barmu_k\bargamma_k\|c_k\|\\
& \leq -\|\bar{\nabla}\L_k\|\Delta_k+\frac{1}{2}\|B_k\|\Delta_k^2+\|c_k\|\Delta_k+\|\bar{\nabla}_{\bx}\L_k\|\breve{\Delta}_k+\bargamma_k\|B_k\|\|\bv_k\|\Delta_k\\
& \quad  +\bargamma_k(M_1+\kappa_{\nabla f})\|\bv_k\| + \frac{1}{2}\bargamma_k\|B_k\|\|\bv_k\|^2 -\barmu_k\bargamma_k\|c_k\|,
\end{align*}
\noindent since $\|\bar{\nabla}_{\bx}\L_k\|+\|c_k\|\geq \|\bar{\nabla}\L_k\|$ and $\tilde{\Delta}_k\leq\Delta_k$. 
Thus, \eqref{eq:upper_bound_predk} holds as long as 
\begin{equation*}
 \barmu_k\bargamma_k\|c_k\|\geq    \|c_k\|\Delta_k+\|\bar{\nabla}_{\bx}\L_k\|\breve{\Delta}_k+\bargamma_k\|B_k\|\|\bv_k\|\Delta_k +\bargamma_k(M_1+\kappa_{\nabla f})\|\bv_k\| + \frac{\bargamma_k}{2}\|B_k\|\|\bv_k\|^2.
\end{equation*}
\noindent Since $\|\bv_k\|\leq \|c_k\|/\sqrt{\kappa_{1,G}}$ and $ \Delta_k\leq \Delta_{\max}\coloneqq\eta_{\max}\alpha_u\beta_{\max}(\kappa_c+M_1+\kappa_{\nabla f})$ (cf.~Assumption \ref{ass:1-1} and Lemma \ref{lemma:4statements}), it is sufficient to show
\begin{equation}\label{eq:mu_1}
\barmu_k\bargamma_k\|c_k\|\geq    \|c_k\|\Delta_k+\|\bar{\nabla}_{\bx}\L_k\|\breve{\Delta}_k+\bargamma_k\|c_k\|\left(\frac{\kappa_B\Delta_{\max}+M_1+\kappa_{\nabla f}}{\sqrt{\kappa_{1,G}}}+\frac{\kappa_B\kappa_c}{2\kappa_{1,G}}\right).
\end{equation}
Equivalently, 
\begin{equation*}
\barmu_k\geq    \frac{\Delta_k}{\bargamma_k}+\frac{\|\bar{\nabla}_{\bx}\L_k\|\breve{\Delta}_k}{\bargamma_k\|c_k\|}+\left(\frac{\kappa_B\Delta_{\max}+M_1+\kappa_{\nabla f}}{\sqrt{\kappa_{1,G}}}+\frac{\kappa_B\kappa_c}{2\kappa_{1,G}}\right).
\end{equation*}
\noindent We only consider $\|c_k\|> 0$ since \eqref{eq:mu_1} holds when $\|c_k\|=0$. By \eqref{Ful_GenerateRadius}, we find that
\begin{equation*}
\frac{\Delta_k}{\bargamma_k}+\frac{\|\bar{\nabla}_{\bx}\L_k\|\breve{\Delta}_k}{\bargamma_k\|c_k\|}\leq \frac{\eta_{1,k}\alpha_k\|\bar{\nabla}\L_k\|}{\bargamma_k}\left(1+\frac{\|\bar{\nabla}_{\bx}\L_k\|\|G_k\|^{-1}}{\|\bar{\nabla}\L_k^{RS}\|}\right).
\end{equation*}
Noticing that $\|\bar{\nabla}\L_k^{RS}\|\geq \min\{\|B_k\|^{-1},\|G_k\|^{-1}\}\|\bar{\nabla}\L_k\|$, we find 
\begin{align*}
\frac{\Delta_k}{\bargamma_k}+\frac{\|\bar{\nabla}_{\bx}\L_k\|\breve{\Delta}_k}{\bargamma_k\|c_k\|} & \leq \frac{\eta_{1,k}\alpha_k\|\bar{\nabla}\L_k\|}{\bargamma_k}\left(1+\frac{\|\bar{\nabla}_{\bx}\L_k\|\|G_k\|^{-1}}{\min\{\|B_k\|^{-1},\|G_k\|^{-1}\}\|\bar{\nabla}\L_k\|}\right)\\
& = \frac{\eta_{1,k}\alpha_k\|\bar{\nabla}\L_k\|}{\bargamma_k}\left(1+\max\left\{\frac{\|B_k\|}{\|G_k\|},1\right\}\frac{\|\bar{\nabla}_{\bx}\L_k\|}{\|\bar{\nabla}\L_k\|}\right)\\
& \leq \frac{2\eta_{1,k}\alpha_k\|\bar{\nabla}\L_k\|}{\bargamma_k}\max\left\{\frac{\|B_k\|}{\|G_k\|},1\right\}.
\end{align*}
To analyze $\bargamma_k$, we notice that $\|\bar{\nabla}\L_k^{RS}\|\leq \max\{\|B_k\|^{-1},\|G_k\|^{-1}\}\|\bar{\nabla}\L_k\|$. Therefore, 
\begin{multline*}
\frac{\breve{\Delta}_k}{\|\bv_k\|} \stackrel{\eqref{eq:breve and tilde_delta_k}}{=} \frac{\|c_k^{RS}\|\Delta_k}{\|\bar{\nabla}\L_k^{RS}\|\|\bv_k\|} \stackrel{\eqref{Ful_GenerateRadius}}{\geq} \frac{\eta_{2,k}\alpha_k\|G_k\|^{-1}\|c_k\|}{\max\{\|B_k\|^{-1},\|G_k\|^{-1}\}\|\bv_k\|} \\
\geq \frac{\eta_{1,k}\alpha_k\|c_k\|}{2\|\bv_k\|}\min\left\{\frac{\|B_k\|}{\|G_k\|},1\right\}\stackrel{\eqref{def:eta2k}}{=}\zeta\alpha_k\phi_k/2,
\end{multline*}
where the last inequality is due to the fact that \eqref{def:eta2k} implies $\zeta\alpha_k\leq 1$, implying~$\eta_{2,k}\geq \eta_{1,k}/2$. We therefore have
\begin{equation}\label{equ:4.13}
\frac{1}{2}\zeta\alpha_k\phi_k\leq \min\left\{\breve{\Delta}_k/\|\bv_k\|, 1\right\}\eqqcolon \bar{\gamma}_k^{\text{trial}}.
\end{equation}
\noindent The above display suggests that we only need to consider $\bargamma_k=\frac{1}{2}\zeta\alpha_k\phi_k$. Noting that $\max\{\|B_k\|/\|G_k\|,1\}\leq \max\{\kappa_B/\sqrt{\kappa_{1,G}},1\}$, $\min\{\|B_k\|/\|G_k\|,1\}\geq \min\{1/(\kappa_B\sqrt{\kappa_{2,G}}),1\}$ and $\|\bar{\nabla}\L_k\|\leq \kappa_c+M_1+\kappa_{\nabla f}$, we obtain that 
\begin{equation*}
\frac{\Delta_k}{\bargamma_k}+\frac{\|\bar{\nabla}_{\bx}\L_k\|\breve{\Delta}_k}{\bargamma_k\|c_k\|}\leq \left[\frac{4\eta_{\max}}{\zeta}(\kappa_c+\kappa_{\nabla f}+M_1)\max\left\{\frac{\kappa_B}{\sqrt{\kappa_{1,G}}},1\right\}\right]\cdot \max\{\kappa_B\sqrt{\kappa_{2,G}},1\}.
\end{equation*}
Therefore, \eqref{eq:upper_bound_predk} holds as long as 
\begin{multline*}
\barmu_k\geq \tilde{\mu}\coloneqq\left[\frac{4\eta_{\max}}{\zeta}(\kappa_c+\kappa_{\nabla f}+M_1)\max\left\{\frac{\kappa_B}{\sqrt{\kappa_{1,G}}},1\right\}\right]\cdot \max\{\kappa_B\sqrt{\kappa_{2,G}},1\}\\
+\left(\frac{\kappa_B\Delta_{\max}+M_1+\kappa_{\nabla f}}{\sqrt{\kappa_{1,G}}}+\frac{\kappa_B\kappa_c}{2\kappa_{1,G}}\right).
\end{multline*}
\noindent Since $\barmu_k$ is increased by at least a factor of $\rho$ for each update, we define $\hat{\mu}\coloneqq\rho\tilde{\mu}$ and complete the proof.
\end{proof}

Compared to existing StoSQP methods, we do not require the stabilized merit~parameter to be large enough. The additional requirement of having a large enough~stabilized value is critical for existing StoSQP methods.~To satisfy this requirement,~\cite{Na2022adaptive, Na2021Inequality}~imposed an adaptive condition on the feasibility error to be satisfied when selecting the merit parameter; and \cite{Berahas2021Sequential,Berahas2021Stochastic,Berahas2022Accelerating,Curtis2021Inexact} imposed a symmetry condition on the noise~distribution. Intuitively, the reduction of the merit function in StoSQP methods should be related to the true KKT residual. In the aforementioned methods, the reduction~of~the stochastic merit function model is first related to the reduction of the deterministic merit function model, and then related to the true KKT residual. However, the~relation between the reduction in stochastic and deterministic models is only valid when the merit parameter stabilizes at a sufficiently large value \cite[Lemma 3.12]{Berahas2021Sequential}. In contrast, our approach relates the reduction of stochastic model to the squared \textit{estimated} KKT residual $\|\bar{\nabla}\L_k\|^2$ (i.e., \eqref{eq:upper_bound_predk}). After \mbox{taking}~the~\mbox{conditional}~\mbox{expectation}~and~\mbox{carefully}~analyzing the error terms, we can further use the true KKT residual~to characterize the improvement of the merit function in each step. In the end, we suppress the condition on a sufficiently large~merit~parameter.

\section{Numerical Experiments}\label{sec:5}

We demonstrate the empirical performance of~Algorithm \ref{Alg:Non-adap} and compare it to the line-search $\ell_1$-StoSQP method designed in \citep[Algorithm 3]{Berahas2021Sequential} under the same fully stochastic setup. We describe the algorithmic \mbox{settings}~in Section~\ref{sec:5.1}; then we show numerical results on a subset of CUTEst problems \citep{Gould2014CUTEst}~in Section~\ref{sec:5.2}; and then we show numerical results on constrained logistic~regression problems in Section \ref{sec:5.3}. The implementation of TR-StoSQP is available at \url{https://github.com/ychenfang/TR-StoSQP}.

\subsection{Algorithm setups}
\label{sec:5.1}

For both our method and $\ell_1$-StoSQP, we try two constant sequences, $\beta_k\in\{0.5,1\}$, and two decaying sequences, $\beta_k\in\{k^{-0.6},k^{-0.8}\}$.~The sequence $\{\beta_k\}$ is used to select the stepsize in $\ell_1$-StoSQP. We use the same input~since, as discussed in Remark \ref{rem:1}, $\beta_k$ in two methods shares the same order. For both methods, the Lipschitz constants of the objective gradients and constraint Jacobians are estimated around the initialization and kept constant for subsequent iterations.

We follow \citet{Berahas2021Sequential} to set up the $\ell_1$-StoSQP method, where we set $B_k=I$ and solve~the SQP subproblems exactly. We set the parameters of~TR-StoSQP as $\zeta=10$, $\delta=10$, $\barmu_{-1}=1$, and $\rho=1.5$. We use \texttt{IPOPT} solver \citep{Waechter2005implementation} to solve \eqref{eq:Sto_tangential_step}, and apply four different Hessian approximations $B_k$ as follows:
\begin{enumerate}[label=(\alph*),topsep=0pt]
\setlength\itemsep{0.2em}
\item Identity (Id). We set $B_k = I$, which is widely used in the literature \citep{Berahas2021Stochastic,Berahas2021Sequential,Na2021Inequality,Na2022adaptive}.

\item Symmetric rank-one (SR1) update.
We set $H_{-1}=H_{0}=I$ and update $H_k$ as
\begin{equation*}
H_{k}=H_{k-1}+\frac{(\boldsymbol{y}_{k-1}-H_{k-1}\Delta\bx_{k-1})(\boldsymbol{y}_{k-1}-H_{k-1}\Delta\bx_{k-1})^T}{(\boldsymbol{y}_{k-1}-H_{k-1}\Delta\bx_{k-1})^T\Delta\bx_{k-1}}, \quad \forall k\geq 1,
\end{equation*}
\noindent where $\boldsymbol{y}_{k-1}=\bar{\nabla}_{\bx}\L_{k}-\bar{\nabla}_{\bx}\L_{k-1}$ and $\Delta\bx_{k-1}=\bx_{k}-\bx_{k-1}$. Since $H_{k}$ depends~on $\barg_k$, we set $B_k = H_{k-1}$ ($B_0=H_{-1}=I$) to ensure that $\sigma(B_k)\subseteq\mF_{k-1}$.

\item Estimated Hessian (EstH). We set $B_0=I$ and $B_k=\bar{\nabla}_{\bx}^2\L_{k-1}$, $\forall k\geq 1$,~where $\bar{\nabla}_{\bx}^2\L_{k-1}$ is estimated using the same sample used to estimate $\barg_{k-1}$.

\item Averaged Hessian (AveH). We set $B_0=I$, set $B_k= \sum_{i=k-100}^{k-1}\bar{\nabla}_{\bx}^2\L_i/100$ for $k\geq 100$, and set $B_k = \sum_{i=0}^{k-1}\bar{\nabla}_{\bx}^2\L_i/k$ for $0<k<100$.
This Hessian~approximation is inspired by \citet{Na2022Hessian}, where the authors showed that the Hessian averaging is helpful for denoising the noise in the stochastic Hessian estimates.
\end{enumerate}

\subsection{CUTEst}\label{sec:5.2}

We select problems from the CUTEst set that have a non-constant objective with only equality constraints, satisfy $d < 1000$, and do not report singularity on $G_kG_k^T$ during the iteration process, resulting in 47 problems in total. The~initial iterate is provided by the CUTEst package. At each step, the \mbox{estimate}~$\barg_k$~is~drawn~from $\N(\nabla f_k, \sigma^2(I+\boldsymbol{1}\boldsymbol{1}^T))$, where $\boldsymbol{1}$ denotes the $d$-dimensional all one vector and $\sigma^2$ denotes the noise level varying within $\{ 10^{-8},10^{-4},10^{-2},10^{-1}\}$. When the approximation EstH or AveH is used, the estimate $(\bar{\nabla}^2 f_k)_{i,j}$ (same for the $(j,i)$ entry) is drawn from $\N((\nabla^2 f_k)_{i,j}, \sigma^2)$ with the same $\sigma^2$ used for estimating the gradient. We set the iteration~budget to $10^5$ and, for each setup of $\beta_k$ and $\sigma^2$, average the KKT residuals over 5 runs. We stop the iteration of both methods if $\|\nabla\mathcal{L}_k\|\leq 10^{-4}$ or $k\geq 10^5$.

We report the KKT residuals of $\ell_1$-StoSQP and TR-StoSQP with different Hessian approximations in Figure \ref{fig:TR-LScompare}. We observe that for both constant $\beta_k$ and decaying $\beta_k$ with a high noise level, \alg\ consistently outperforms $\ell_1$-StoSQP. We note~that $\ell_1$-StoSQP performs better than \alg\ for decaying $\beta_k$ with a low noise level (e.g., $\sigma^2=10^{-8}$). However, in that case, \alg\ is not sensitive to the noise~level $\sigma^2$ while the performance of $\ell_1$-StoSQP deteriorates rapidly as $\sigma^2$ increases. We think that the robustness against noise is a benefit brought by the trust-region constraint, which properly regularizes the SQP subproblem when $\sigma^2$ is large. Furthermore, among the four choices of Hessian approximations, \alg\ generally performs the best with the averaged Hessian, and the second best with the estimated Hessian. Compared to the identity and SR1 update, the estimated~\mbox{Hessian provides a better} approximation to the true Hessian (especially when $\sigma^2$ is small); the averaged Hessian further reduces the noise that leads to a better performance (especially when $\sigma^2$ is large). 

We observe that when $\sigma^2$ is large, or $\sigma^2$ is small but $\beta_k$ is constant, TR-StoSQP outperforms $\ell_1$-StoSQP even when the identity Hessian is used. However, for decaying $\beta_k$ and small $\sigma^2$, the performance of TR-StoSQP is less competitive. This disparity~in performance could arise from the difference in trial step computation.~In line-search methods, even though the search direction is~\mbox{determined}~by~solving a Newton system, it can still be decomposed orthogonally into a normal direction $\bw_k\in\text{im}(G_k^T)$ and a tangential direction $\bt_k\in\text{ker}(G_k)$ \citep[see][for details]{Berahas2021Sequential}. The~direction of $\bw_k$ is consistent between trust-region and line-search methods, represented~as 
$\bv_k\coloneqq -G_k^T[G_kG_k^T]^{-1}c_k$. However, the directions of the tangential step are \mbox{different}.~In trust-region methods, the tangential step is determined by \eqref{eq:Sto_tangential_step} \mbox{using}~$\bw_k=\bargamma_k\bv_k$~with $\bargamma_k$ chosen based on \eqref{eq:Sto_gamma_k} and \eqref{def:proj}. In contrast, in line-search methods, the tangential direction effectively comes from \eqref{eq:Sto_tangential_step} using $\bw_k=\bv_k$ without the trust-region~constraint. In stochastic optimization, most iterations satisfy $\bargamma_k<1$. Therefore, the~directions of the tangential step might differ in trust-region methods and~\mbox{line-search}~\mbox{methods},~even~if~the~identity
Hessians are used and the iterates~are near an optimal point. Also, the \mbox{trust-region}~constraint serves as a regularization that is potentially helpful for large noise scenarios~($\sigma^2$ is large or $\beta_k$ is constant).
We should emphasize that the difference in trial step direction is due to different mechanisms of trust-region methods and line-search methods (trust-region methods compute the search direction and stepsize simultaneously, while line-search methods compute them separately) and the fully stochastic setup~(the~noise does not gradually vanish), but not due to our algorithm design.

\begin{figure}[!t]
\centering
\subfigure[$\beta_k=0.5$]{\includegraphics[width=0.43\textwidth]{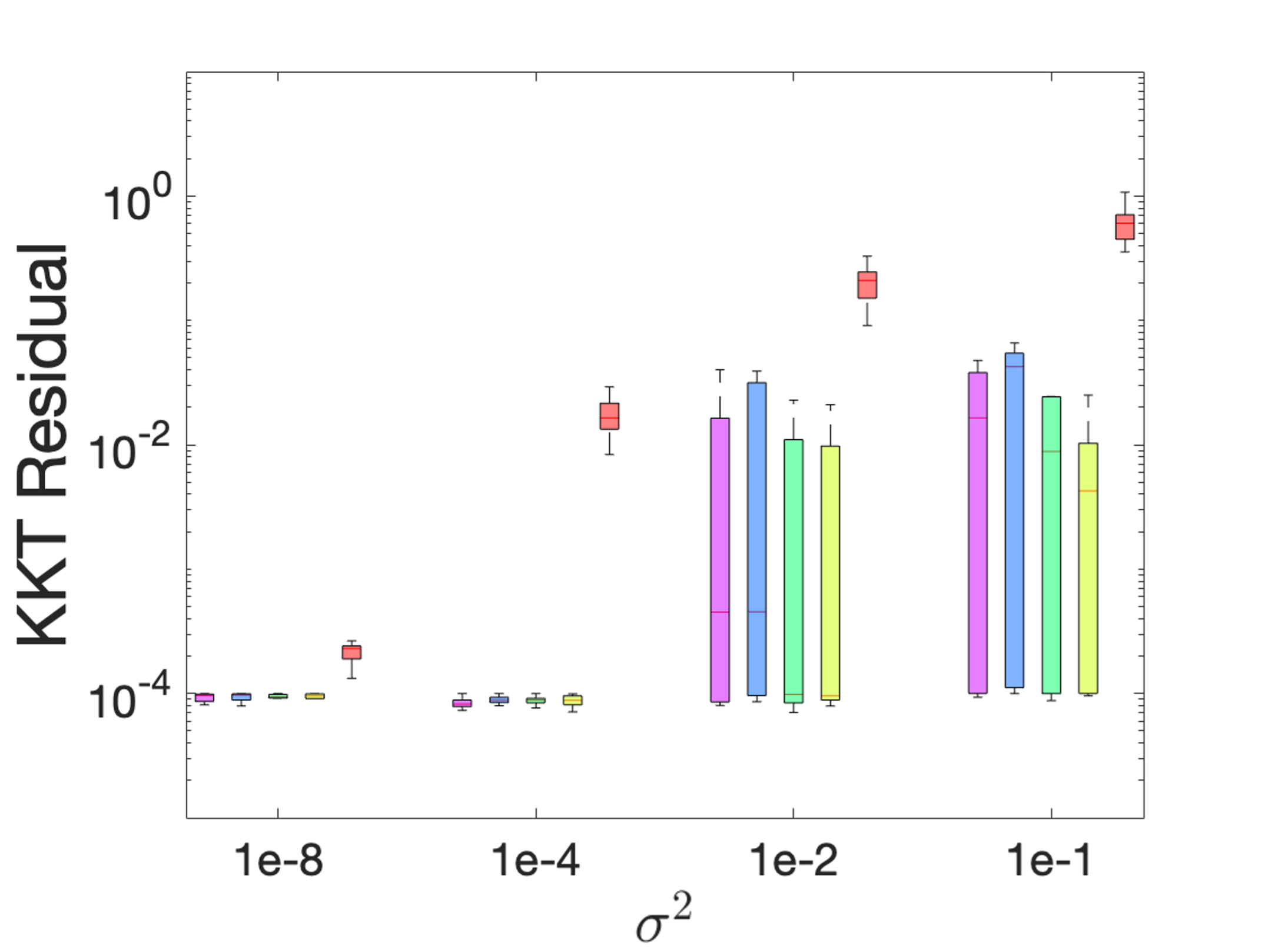}}
\subfigure[$\beta_k=1.0$]{\includegraphics[width=0.43\textwidth]{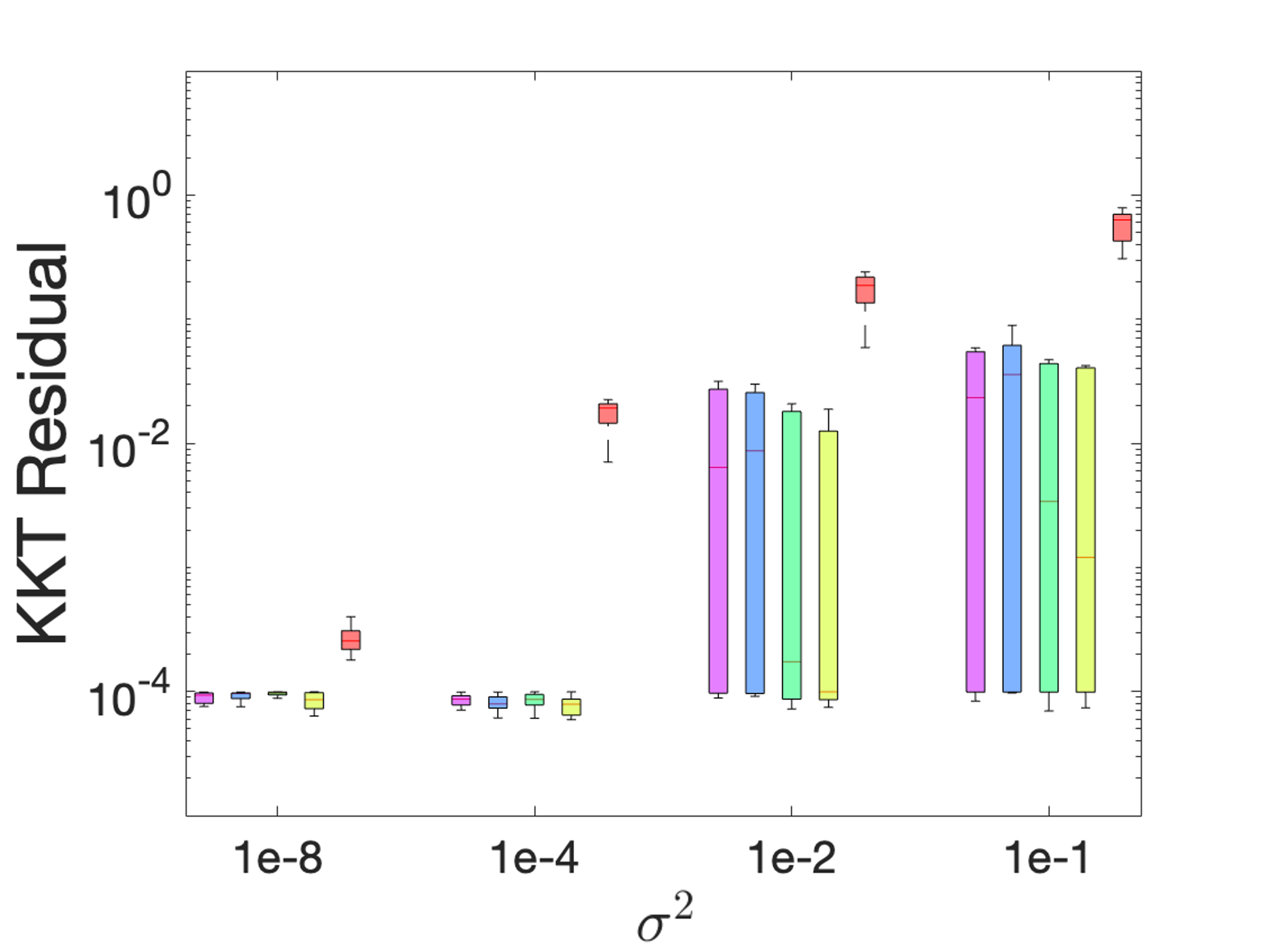}}
\subfigure[$\beta_k=k^{-0.6}$]{\includegraphics[width=0.43\textwidth]{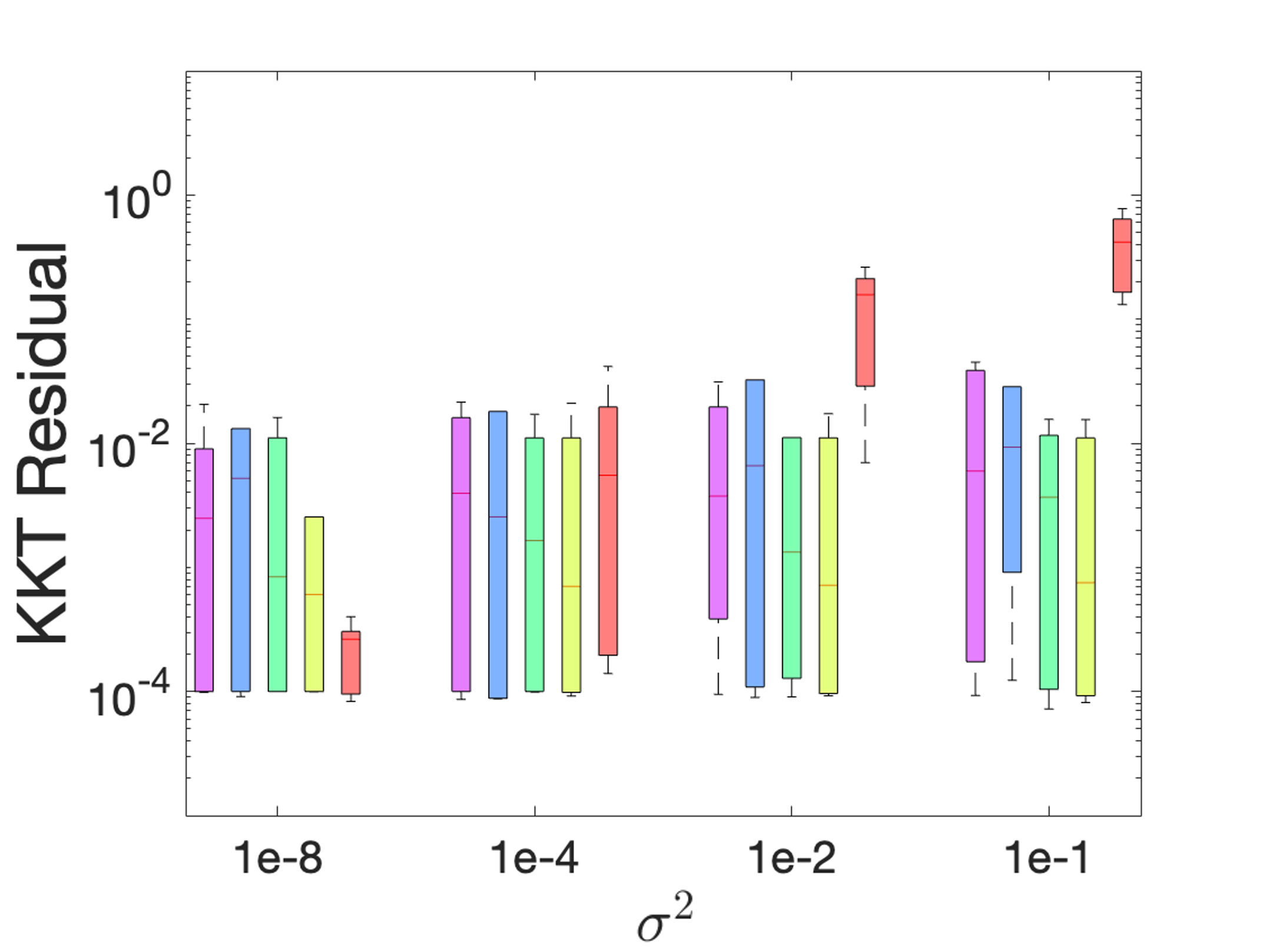}}
\subfigure[$\beta_k=k^{-0.8}$]{\includegraphics[width=0.43\textwidth]{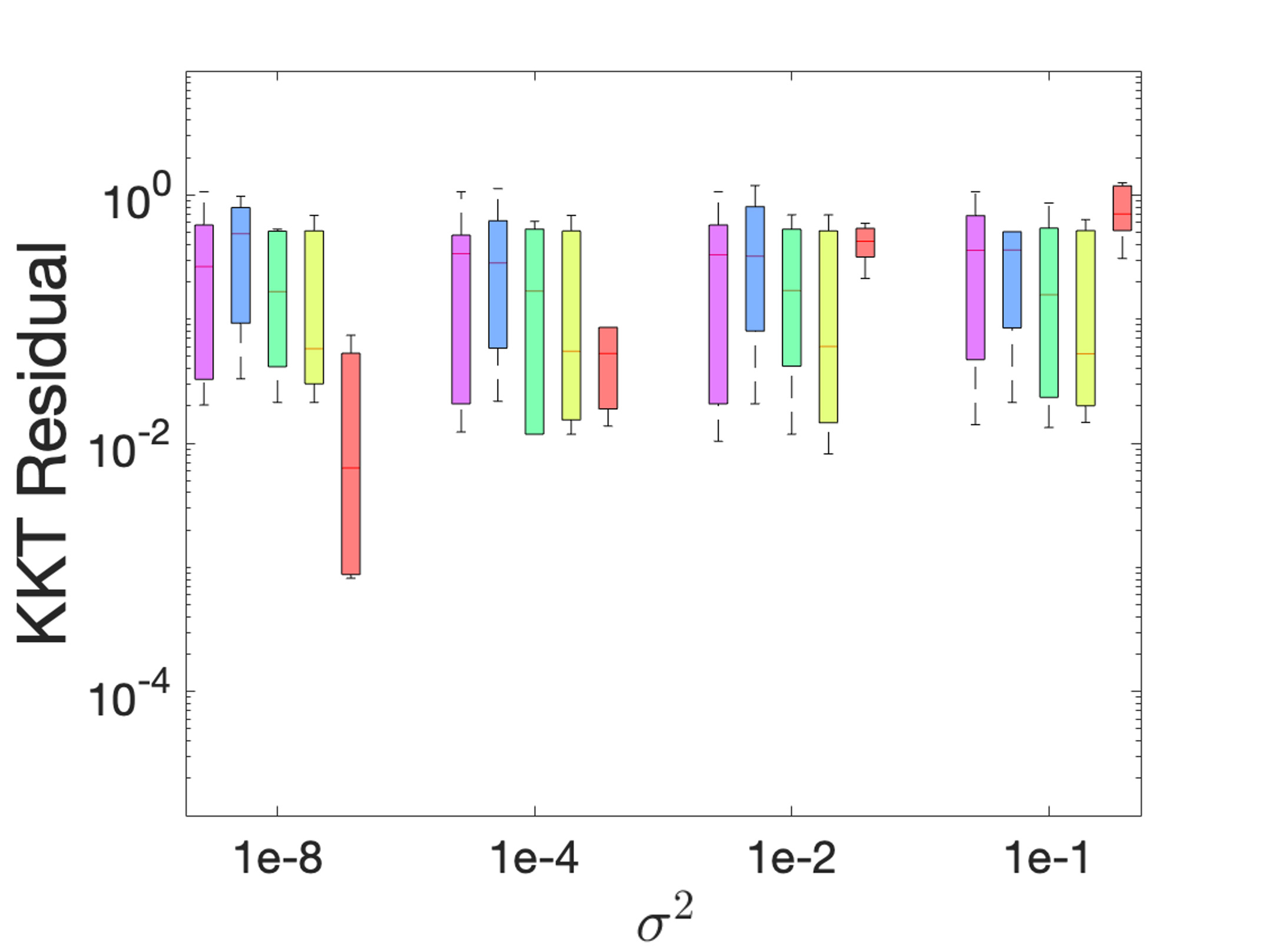}}
\subfigure{\includegraphics[width=0.7\textwidth]{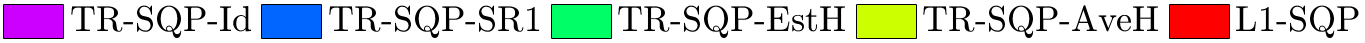}}
\caption{KKT residual boxplots for CUTEst problems. For each $\sigma^2$, there are five boxes. The first four boxes correspond to the proposed \alg\ method with four different choices of $B_k$, while the last box corresponds to the $\ell_1$-StoSQP method.}\label{fig:TR-LScompare}
\end{figure}

We then investigate the adaptivity of the radius selection scheme in \eqref{Ful_GenerateRadius}. As~explained in Remark \ref{rem:1}, the radius $\Delta_k$ can be set larger or smaller than $\alpha_k = \mathcal{O}(\beta_k)$, depending on the magnitude of the estimated KKT residual. In Table \ref{tab:portion}, we report~the proportions of the three cases in \eqref{Ful_GenerateRadius}: $\Delta_k<\alpha_k$, $\Delta_k=\alpha_k$, and $\Delta_k>\alpha_k$.~We \mbox{average} the proportions over 5 runs of all 47 problems in each setup. From Table \ref{tab:portion}, we have~the following three observations. (i) Case 2 has a near zero proportion for all setups. This phenomenon is due to the fact that $\eta_{1,k}-\eta_{2,k}=\mathcal{O}(\beta_k)$. For constant~$\beta_k$,~this value~is small, thus a few iterations are in Case 2.~For decaying $\beta_k$, this value even converges~to zero, thus almost no iterations are in Case 2. (ii) Case 3 is triggered~quite frequently if $\beta_k$ decays rapidly. This phenomenon suggests that the adaptive scheme can generate aggressive steps even if we input a conservative~radius-related sequence $\beta_k$. (iii)~The proportion of Case 1 dominates the other two cases in the most of setups. This is reasonable since Case 1 is always triggered when the iterates are near a KKT point.

\begin{table}[t]
\centering
\addtolength{\tabcolsep}{-0.15cm} 
\resizebox{\textwidth}{!}{
\begin{tabular}{cc|ccc|ccc|ccc|ccc}
\hline
\multicolumn{1}{c}{\multirow{2}{*}{$\beta_k$}} & \multicolumn{1}{c|}{\multirow{2}{*}{$B_k$}}  & \multicolumn{3}{c|}{$\sigma^2=10^{-8}$}              & \multicolumn{3}{c|}{$\sigma^2=10^{-4}$}              & \multicolumn{3}{c|}{$\sigma^2=10^{-2}$}              & \multicolumn{3}{c}{$\sigma^2=10^{-1}$}             
 \\ \cline{3-14} 
&  & \multicolumn{1}{c}{\small Case 1} & \multicolumn{1}{c}{\small Case 2} & \multicolumn{1}{c|}{\small Case 3} & \multicolumn{1}{c}{\small Case 1} & \multicolumn{1}{c}{\small Case 2} & \multicolumn{1}{c|}{\small Case 3} & \multicolumn{1}{c}{\small Case 1} & \multicolumn{1}{c}{\small Case 2} & \multicolumn{1}{c|}{\small Case 3} & \multicolumn{1}{c}{\small Case 1} & \multicolumn{1}{c}{\small Case 2} & \multicolumn{1}{c}{\small Case 3} \\ \hline
\multirow{4}{*}{0.5} & Id & 90.3 & 0.1 & 9.6 & 91.3 & 0.2 & 8.5 & 95.0 & 0.1 & 4.9 & 54.7 & 0.9 & \textbf{44.4} \\
& SR1 & 93.8 & 0.1 & 6.1 & 92.7 & 0.1 & 7.2 & 94.6 & 0.1 & 5.7 & 56.2 & 1.1 & \textbf{42.7} \\
& EstH	& 92.2 & 0.1 & 7.7 & 94.8 & 0.1 & 5.1 & 84.8 & 0.2 & 15.0 & 71.1 & 0.5 & \textbf{28.4} \\
& AveH & 92.5 & 0.1 & 7.4 & 94.1 & 0.1 & 5.8 & 88.2 & 0.2 & 11.6 & 64.2 & 0.4 & \textbf{35.4} \\ \hline
\multirow{4}{*}{1.0} & Id & 92.0 & 0.1 & 7.9 & 93.7 & 0.1 & 6.2 & 95.4 & 0.2 & 4.4 & 57.1 & 1.2 & \textbf{41.7}\\
& SR1 & 94.0 & 0.2 & 5.8 & 96.1 & 0.1 & 3.8 & 97.7 & 0.2 & 2.1 & 64.2 & 1.2 & \textbf{34.6} \\
& EstH & 92.4 & 0.1 & 7.5 & 93.8 & 0.1 & 6.1 & 87.5  & 0.4 & 12.1 & 72.8 & 0.5 & \textbf{26.7} \\
& AveH & 92.4 & 0.2 & 7.4 & 93.9 & 0.3 & 5.8 & 85.5 & 0.3 & 14.2 & 67.1 & 0.6 & \textbf{32.3} \\ \hline
\multirow{4}{*}{$k^{-0.6}$} & Id & 97.2 & 0.0 & 2.8 & 96.8 & 0.0 & 3.2 & 93.4 & 0.0 & 6.6 & 51.8 & 0.0 & \textbf{48.2} \\
& SR1 & 98.3 & 0.0 & 1.7 & 97.1 & 0.0 & 2.9 & 93.2 & 0.0 & 6.8 & 51.5 & 0.0 & \textbf{48.5} \\
& EstH	& 97.9 & 0.0 & 2.1 & 95.8 & 0.0 & 4.2 & 86.6 & 0.0 & 13.4 & 69.1 & 0.0 & \textbf{30.9} \\
& AveH & 97.4 & 0.0 & 2.6 & 96.1 & 0.0 & 3.9 & 86.8 & 0.0 & 13.2 & 65.5 & 0.0 & \textbf{34.8} \\ \hline
\multirow{4}{*}{$k^{-0.8}$} & Id & 70.6 & 0.0 & \textbf{29.4} & 68.1 & 0.0 & \textbf{31.9} & 66.4 & 0.0 & \textbf{33.6} & 45.8 & 0.0 & \textbf{54.2} \\
& SR1 & 56.1 & 0.0 & \textbf{43.9} & 65.7 & 0.0 & \textbf{34.3} & 66.6 & 0.0 & \textbf{33.4} & 39.9 & 0.0 & \textbf{60.1} \\
& EstH & 67.5 & 0.0 & \textbf{32.5} & 65.2 & 0.0 & \textbf{34.8} & 62.0 & 0.0 & \textbf{38.0} & 54.7 & 0.0 & \textbf{45.3} \\
& AveH & 67.9 & 0.0 & \textbf{32.1} & 66.7 & 0.0 & \textbf{33.3} & 65.9 & 0.0 & \textbf{34.1} & 51.4 & 0.0 & \textbf{48.6} \\
\hline     
\end{tabular}}
\caption{Proportions of the three cases in \eqref{Ful_GenerateRadius} (\%). We highlight the proportion of Case~3~if the value is higher than 25\%.}\label{tab:portion}
\end{table}

In Remark \ref{rem:3}, we provide two alternative relaxation techniques to compute~the trial step. Figure \ref{fig:Methodcompare} reports the KKT residuals for these methods. We use \texttt{Adap1}~to~denote TR-StoSQP with our adaptive relaxation technique; \texttt{Adap2} to denote TR-StoSQP with the technique in Remark \ref{rem:3}(i), where the radius of the tangential step is controlled by $\tilde{\Delta}_k\coloneqq\sqrt{\Delta_k^2-\|\bw_k\|^2}$; and \texttt{NonAdap} to denote TR-StoSQP with the technique in Remark \ref{rem:3}(ii), where the prespecified parameter is set as $\theta=0.8$. The~remaining algorithm setups follow from TR-StoSQP and $B_k=I$. We observe that~the three techniques have comparable performance for most \mbox{combinations}~of~$\beta_k$~and~$\sigma^2$, while \texttt{Adap1} is slightly better than the other two techniques in some cases. 	
The results suggest that our adaptive relaxation technique, as well as its variant in Remark~\ref{rem:3}(i), is at least as good as the conventional technique (the nonadaptive technique in Remark \ref{rem:3}(ii)) in practice, but it requires no effort in tuning parameters.

\begin{figure}[!thb]
\centering
\subfigure[$\beta_k=0.5$]{\includegraphics[width=0.43\textwidth]{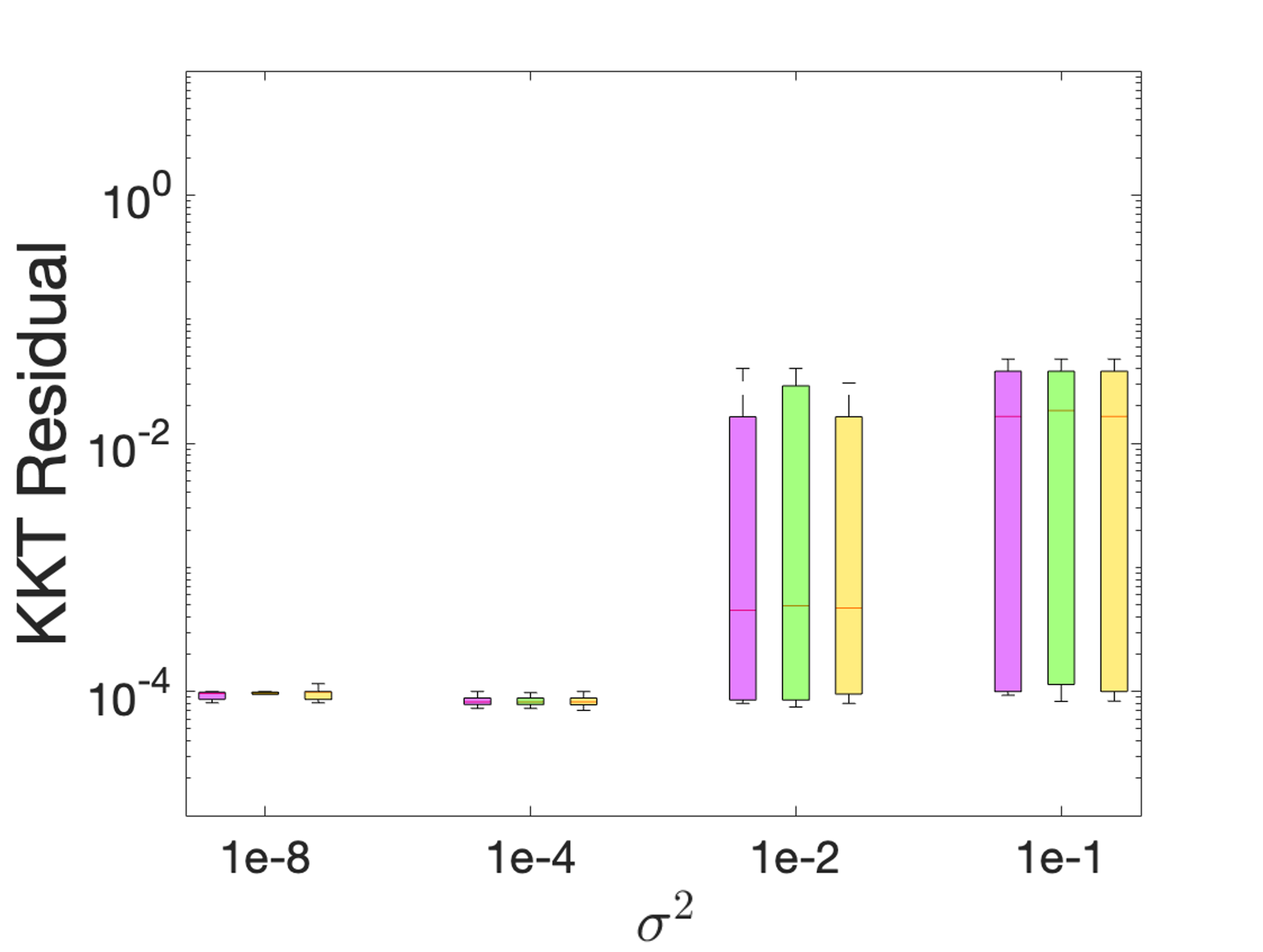}}
\subfigure[$\beta_k=1.0$]{\includegraphics[width=0.43\textwidth]{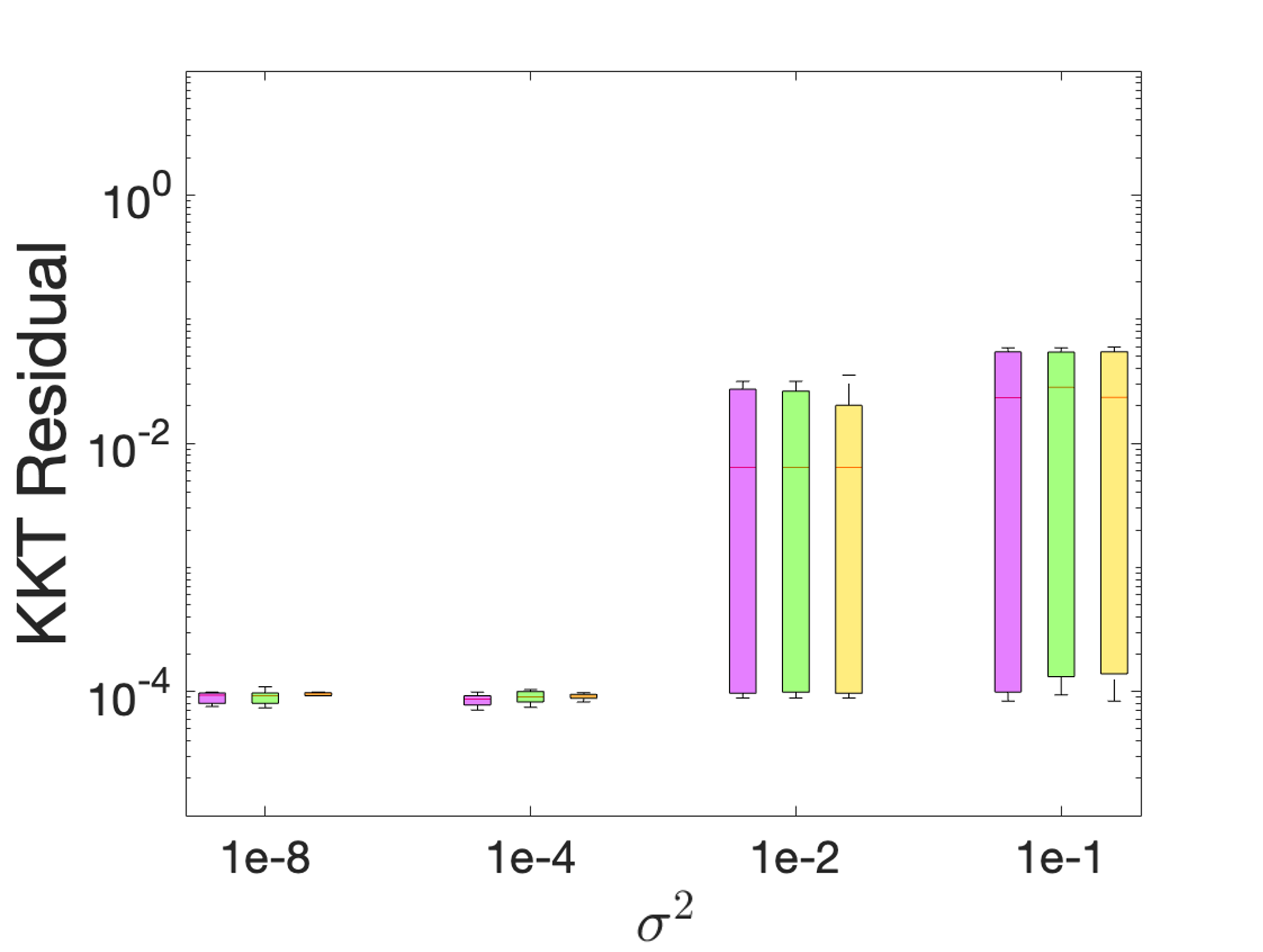}}
\subfigure[$\beta_k=k^{-0.6}$]{\includegraphics[width=0.43\textwidth]{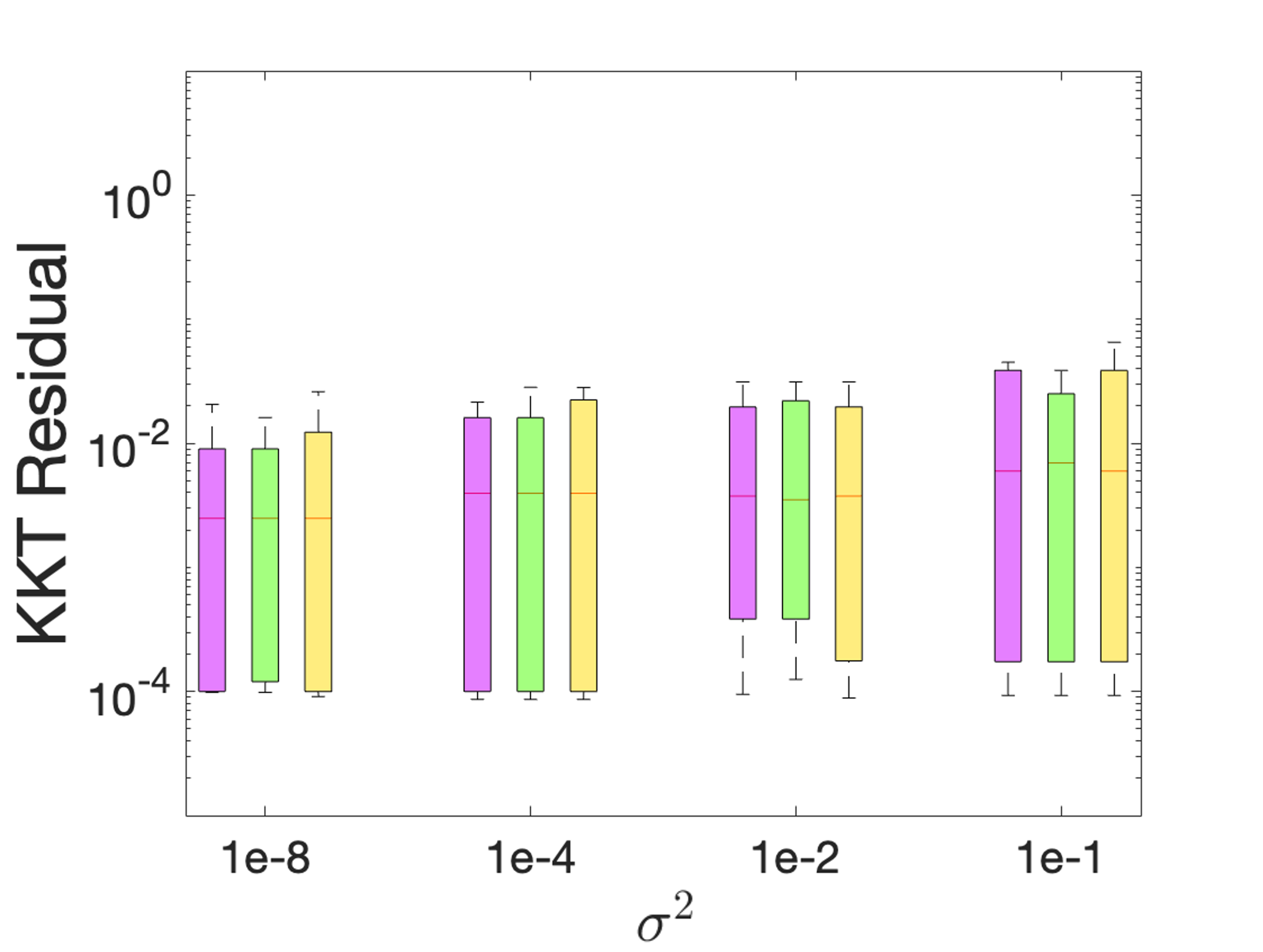}}
\subfigure[$\beta_k=k^{-0.8}$]{\includegraphics[width=0.43\textwidth]{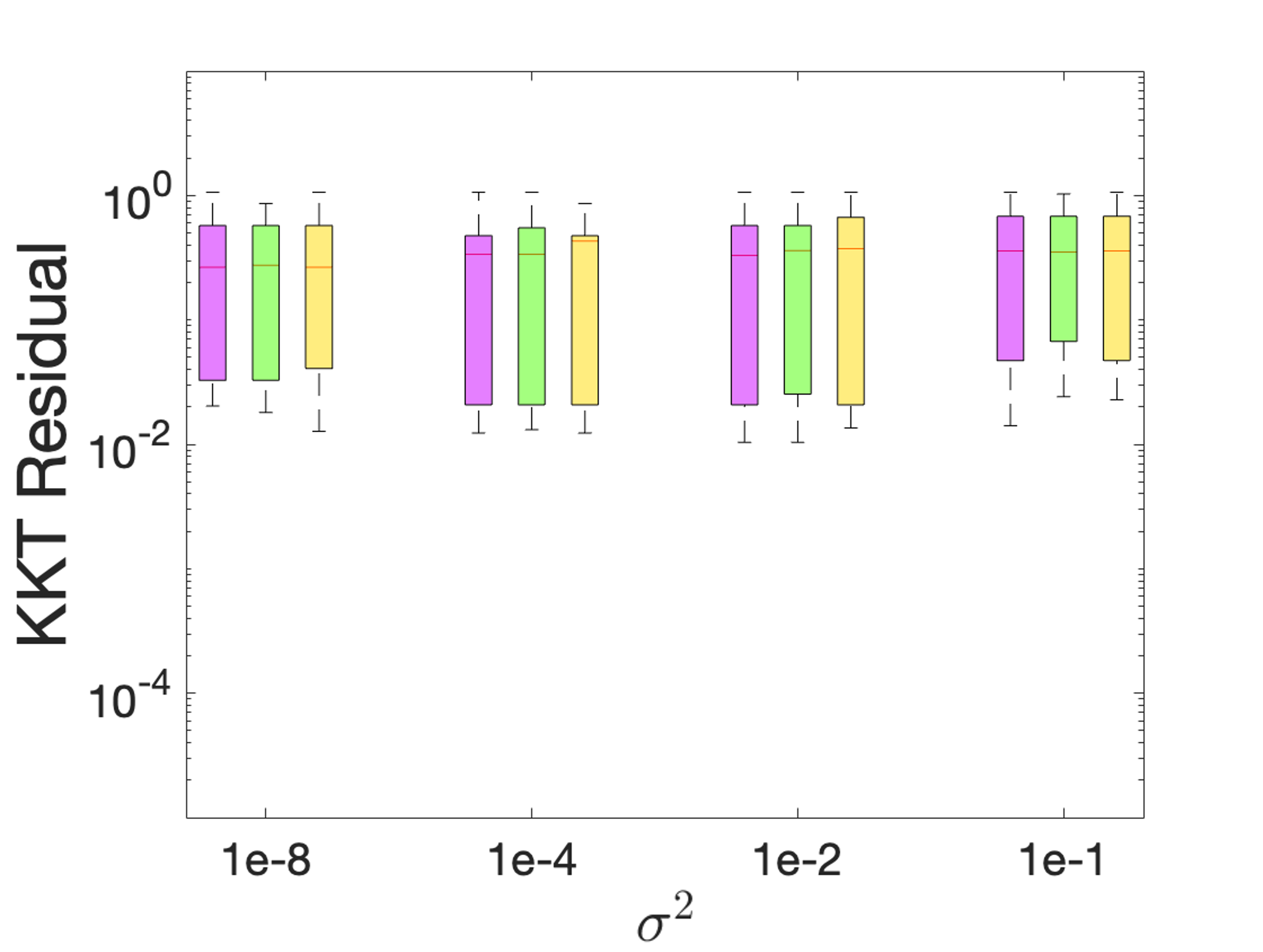}}
\subfigure{\includegraphics[width=0.35\textwidth]{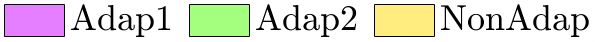}}
\caption{KKT residual boxplots for CUTEst problems with different relaxation techniques. The Hessian approximation $B_k$ is set as identity matrix. For each $\sigma^2$, there are three boxes. The first box corresponds to the proposed adaptive relaxation technique. The second box corresponds to the adaptive technique in Remark \ref{rem:3} (i). The last box corresponds to the nonadaptive technique in Remark \ref{rem:3} (ii).}\label{fig:Methodcompare}

\end{figure}

\subsection{Constrained logistic regression}\label{sec:5.3}

We consider equality-constrained logistic regression of the form 
\begin{equation*}
\min_{\bx\in\mR^d}\; f(\bx)=\frac{1}{N}\sum_{i=1}^N\log\cbr{1+\exp\rbr{-y_i\cdot \langle \bz_i, \bx\rangle} } \quad \text{ s.t. } A\bx=\boldsymbol{b},
\end{equation*}
\noindent where $\bz_i\in \mR^d$ is the sample point, $y_i\in\{-1,1\}$ is the label, and $A\in\mR^{m\times d}$ and~$\bb\in \mR^m$ form the deterministic constraints. We implement 8 datasets from LIBSVM \citep{Chang2011LIBSVM}: \texttt{austrilian}, \texttt{breast-cancer}, \texttt{diabetes}, \texttt{heart}, \texttt{ionosphere}, \texttt{sonar}, \texttt{splice}, and \texttt{svmguide3}. For each dataset, we set $m=5$ and generate random $A$ and $\boldsymbol{b}$ by~drawing each element from a standard normal distribution. We ensure that $A$ has full row~rank in all problems. For both algorithms and all problems, the initial iterate is set to be~all one vector of appropriate dimension. In each iteration, we select one sample at random to estimate the objective gradient (and Hessian if EstH or AveH is used).~A~budget of 20 epochs---the number of passes over the dataset---is used for both algorithms and all problems. We stop the iteration if $\|\nabla\mathcal{L}_k\|\leq 10^{-4}$ or the epoch budget is consumed.

We report the average of the KKT residuals over 5 runs in Figure~\ref{fig:KKTResidualLogit}. From the~figure, we observe that \alg\ with all four choices of $B_k$ consistently outperforms $\ell_1$-StoSQP when $\beta_k = 0.5$, $1.0$, and $k^{-0.6}$. When $\beta_k=k^{-0.8}$, \alg\ enjoys~a better performance by using the estimated Hessian or averaged Hessian. This~experiment further illustrates the promising~performance~of~our~method.

\begin{figure}[h]
\centering
\subfigure[Constant $\beta_k$]{\includegraphics[width=0.43\textwidth]{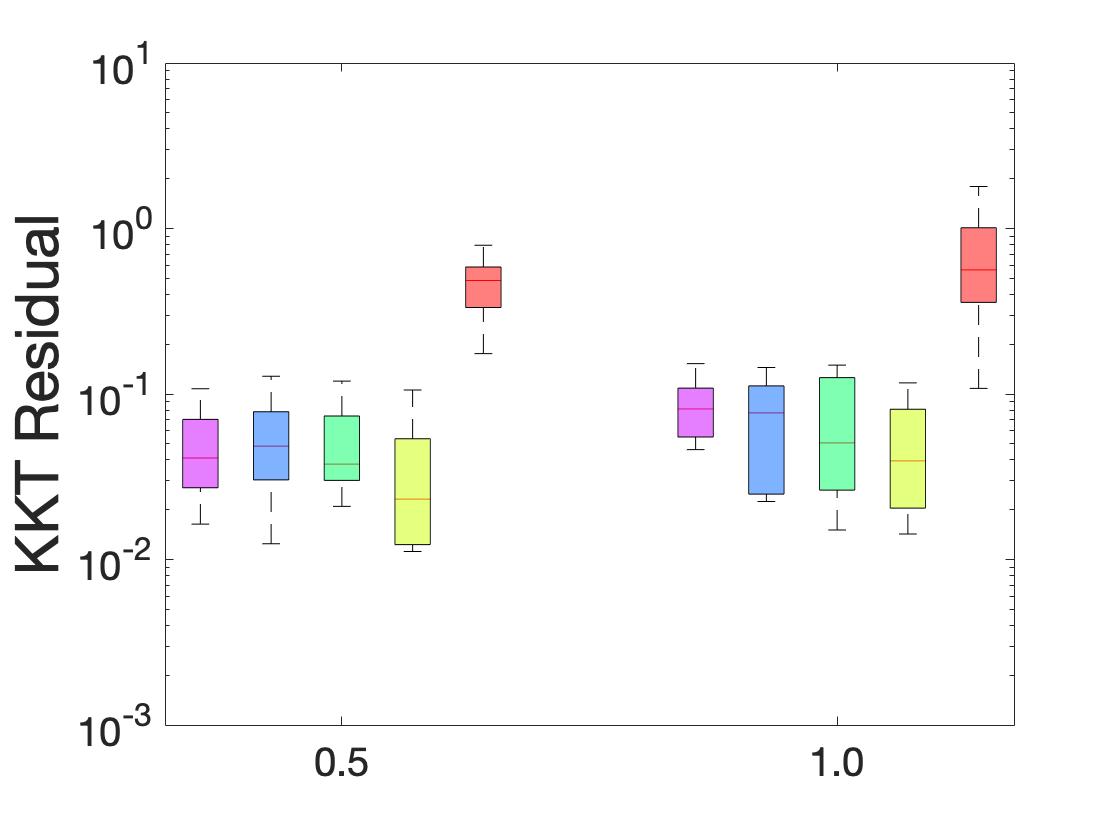}}
\subfigure[Decaying $\beta_k=k^{-s}$]{\includegraphics[width=0.43\textwidth]{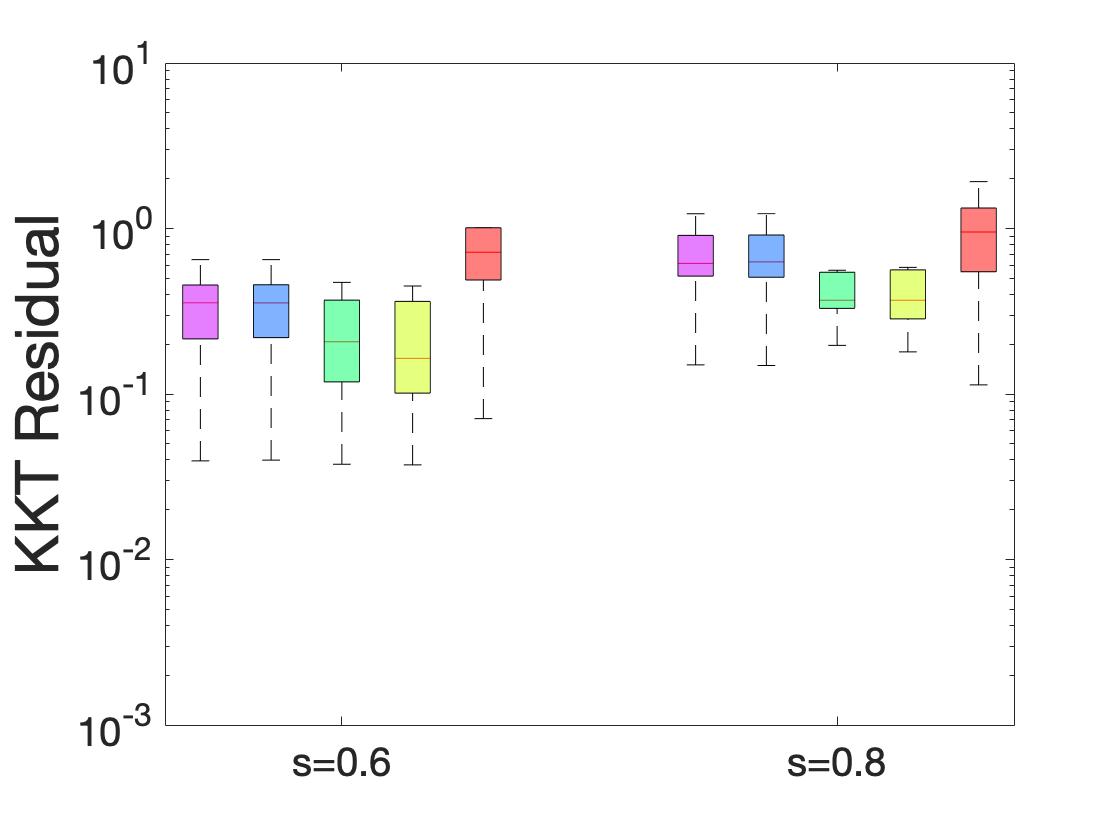}}
\subfigure{\includegraphics[width=0.7\textwidth]{Figures/labelLogit}}
\caption{KKT residual boxplots for constrained logistic regression problems. For each setup of $\beta_k$, there are five boxes. The first four boxes correspond to the proposed \alg\ method with four different choices of $B_k$, while the last box corresponds to the $\ell_1$-StoSQP method.}
\label{fig:KKTResidualLogit}

\end{figure}

\section{Conclusion}\label{sec:6}

We designed a trust-region stochastic SQP (TR-StoSQP) algorithm to solve nonlinear optimization problems with stochastic objective and deterministic equality constraints. We developed an adaptive relaxation technique to~address the~infeasibility issue that arises when trust-region methods are applied to constrained problems. With a stabilized merit parameter, TR-StoSQP converges in two regimes. (i) When $\beta_k=\beta$, $\forall k\geq0$, the expectation of weighted averaged KKT residuals converges to a neighborhood around zero. (ii) When $\beta_k$ satisfies $\sum \beta_k = \infty$~and $\sum \beta_k^2<\infty$, the KKT residuals converge to zero almost surely. We also~showed~that~the merit parameter is ensured to stabilize, provided~the~\mbox{gradient} estimates are~bounded. Our numerical experiments on a subset of problems of the~CUTEst set and constrained logistic regression problems showed promising performance of the proposed method.

There are still several interesting future directions. 
First, it is of interest to~design trust-region StoSQP algorithms when the Jacobians of constraints are~\mbox{rank-deficient}. 
Second, how to establish global convergence without the assumption of~bounded~noise remains an open question. Removing that assumption may require a deeper understanding of the merit function and randomness in estimation.~Finally, it is of~\mbox{interest}~to devise a method that uses second-order information efficiently. To fully exploit second-order derivatives, the method should move the trial steps along the negative curvature appropriately.

\subsection*{Acknowledgments}
We would like to acknowledge the DOE, NSF, and ONR as well as the J. P. Morgan Chase Faculty Research Award for providing partial support of this work.

\bibliographystyle{my-plainnat}
\bibliography{ref}

\begin{thebibliography}{41}
\providecommand{\natexlab}[1]{#1}
\providecommand{\url}[1]{\texttt{#1}}
\expandafter\ifx\csname urlstyle\endcsname\relax
  \providecommand{\doi}[1]{doi: #1}\else
  \providecommand{\doi}{doi: \begingroup \urlstyle{rm}\Url}\fi

\bibitem[Berahas et~al.(2021{\natexlab{a}})Berahas, Curtis, O'Neill, and
  Robinson]{Berahas2021Stochastic}
A.~S. Berahas, F.~E. Curtis, M.~J. O'Neill, and D.~P. Robinson.
\newblock A stochastic sequential quadratic optimization algorithm for
  nonlinear equality constrained optimization with rank-deficient jacobians.
\newblock \emph{arXiv preprint arXiv:2106.13015}, 2021{\natexlab{a}}.

\bibitem[Berahas et~al.(2021{\natexlab{b}})Berahas, Curtis, Robinson, and
  Zhou]{Berahas2021Sequential}
A.~S. Berahas, F.~E. Curtis, D.~Robinson, and B.~Zhou.
\newblock Sequential quadratic optimization for nonlinear equality constrained
  stochastic optimization.
\newblock \emph{{SIAM} Journal on Optimization}, 31\penalty0 (2):\penalty0
  1352--1379, 2021{\natexlab{b}}.

\bibitem[Berahas et~al.(2022{\natexlab{a}})Berahas, Bollapragada, and
  Zhou]{Berahas2022Adaptive}
A.~S. Berahas, R.~Bollapragada, and B.~Zhou.
\newblock An adaptive sampling sequential quadratic programming method for
  equality constrained stochastic optimization.
\newblock \emph{arXiv preprint arXiv:2206.00712}, 2022{\natexlab{a}}.

\bibitem[Berahas et~al.(2022{\natexlab{b}})Berahas, Shi, Yi, and
  Zhou]{Berahas2022Accelerating}
A.~S. Berahas, J.~Shi, Z.~Yi, and B.~Zhou.
\newblock Accelerating stochastic sequential quadratic programming for equality
  constrained optimization using predictive variance reduction.
\newblock \emph{arXiv preprint arXiv:2204.04161}, 2022{\natexlab{b}}.

\bibitem[Bertsekas(1998)]{Bertsekas1998Network}
D.~Bertsekas.
\newblock \emph{Network optimization: continuous and discrete models},
  volume~8.
\newblock Athena Scientific, 1998.

\bibitem[Bertsekas(1982)]{Bertsekas1982Constrained}
D.~P. Bertsekas.
\newblock \emph{Constrained Optimization and Lagrange Multiplier Methods}.
\newblock Elsevier, 1982.

\bibitem[Birge(1997)]{Birge1997State}
J.~R. Birge.
\newblock State-of-the-art-survey{\textemdash}stochastic programming:
  Computation and applications.
\newblock \emph{{INFORMS} Journal on Computing}, 9\penalty0 (2):\penalty0
  111--133, 1997.

\bibitem[Boggs and Tolle(1995)]{Boggs1995Sequential}
P.~T. Boggs and J.~W. Tolle.
\newblock Sequential quadratic programming.
\newblock \emph{Acta Numerica}, 4:\penalty0 1--51, 1995.

\bibitem[Bottou et~al.(2018)Bottou, Curtis, and
  Nocedal]{Bottou2018Optimization}
L.~Bottou, F.~E. Curtis, and J.~Nocedal.
\newblock Optimization methods for large-scale machine learning.
\newblock \emph{{SIAM} Review}, 60\penalty0 (2):\penalty0 223--311, 2018.

\bibitem[Byrd et~al.(1987)Byrd, Schnabel, and Shultz]{Byrd1987Trust}
R.~H. Byrd, R.~B. Schnabel, and G.~A. Shultz.
\newblock A trust region algorithm for nonlinearly constrained optimization.
\newblock \emph{{SIAM} Journal on Numerical Analysis}, 24\penalty0
  (5):\penalty0 1152--1170, 1987.

\bibitem[Celis et~al.(1984)Celis, Dennis~Jr, and Tapia]{Celis1984trust}
M.~R. Celis, J.~Dennis~Jr, and R.~A. Tapia.
\newblock A trust region strategy for equality constrained optimization.
\newblock Technical report, 1984.

\bibitem[Chang and Lin(2011)]{Chang2011LIBSVM}
C.-C. Chang and C.-J. Lin.
\newblock {LIBSVM}: A library for support vector machines.
\newblock \emph{{ACM} Transactions on Intelligent Systems and Technology},
  2\penalty0 (3):\penalty0 1--27, 2011.

\bibitem[Chen et~al.(2018)Chen, Tung, Vedula, and Mori]{Chen2018Constraint}
C.~Chen, F.~Tung, N.~Vedula, and G.~Mori.
\newblock Constraint-aware deep neural network compression.
\newblock In \emph{Computer Vision {\textendash} {ECCV} 2018}, pages 409--424.
  Springer International Publishing, 2018.

\bibitem[Chen et~al.(2017)Chen, Menickelly, and Scheinberg]{Chen2017Stochastic}
R.~Chen, M.~Menickelly, and K.~Scheinberg.
\newblock Stochastic optimization using a trust-region method and random
  models.
\newblock \emph{Mathematical Programming}, 169\penalty0 (2):\penalty0 447--487,
  2017.

\bibitem[Chen et~al.(2020)Chen, Na, and Kolar]{Chen2020Convergence}
Y.-L. Chen, S.~Na, and M.~Kolar.
\newblock Convergence analysis of accelerated stochastic gradient descent under
  the growth condition.
\newblock \emph{arXiv preprint arXiv:2006.06782}, 2020.

\bibitem[Curtis and Robinson(2018)]{Curtis2018Exploiting}
F.~E. Curtis and D.~P. Robinson.
\newblock Exploiting negative curvature in deterministic and stochastic
  optimization.
\newblock \emph{Mathematical Programming}, 176\penalty0 (1-2):\penalty0 69--94,
  2018.

\bibitem[Curtis and Shi(2020)]{Curtis2020fully}
F.~E. Curtis and R.~Shi.
\newblock A fully stochastic second-order trust region method.
\newblock \emph{Optimization Methods and Software}, 37\penalty0 (3):\penalty0
  844--877, 2020.

\bibitem[Curtis et~al.(2019)Curtis, Scheinberg, and Shi]{Curtis2019Stochastic}
F.~E. Curtis, K.~Scheinberg, and R.~Shi.
\newblock A stochastic trust region algorithm based on careful step
  normalization.
\newblock \emph{{INFORMS} Journal on Optimization}, 1\penalty0 (3):\penalty0
  200--220, 2019.

\bibitem[Curtis et~al.(2021{\natexlab{a}})Curtis, O'Neill, and
  Robinson]{Curtis2021Worst}
F.~E. Curtis, M.~J. O'Neill, and D.~P. Robinson.
\newblock Worst-case complexity of an sqp method for nonlinear equality
  constrained stochastic optimization.
\newblock \emph{arXiv preprint arXiv:2112.14799}, 2021{\natexlab{a}}.

\bibitem[Curtis et~al.(2021{\natexlab{b}})Curtis, Robinson, and
  Zhou]{Curtis2021Inexact}
F.~E. Curtis, D.~P. Robinson, and B.~Zhou.
\newblock Inexact sequential quadratic optimization for minimizing a stochastic
  objective function subject to deterministic nonlinear equality constraints.
\newblock \emph{arXiv preprint arXiv:2107.03512}, 2021{\natexlab{b}}.

\bibitem[Dupacova and Wets(1988)]{Dupacova1988Asymptotic}
J.~Dupacova and R.~Wets.
\newblock Asymptotic behavior of statistical estimators and of optimal
  solutions of stochastic optimization problems.
\newblock \emph{The Annals of Statistics}, 16\penalty0 (4):\penalty0
  1517--1549, 1988.

\bibitem[El-Alem(1991)]{ElAlem1991Global}
M.~El-Alem.
\newblock A global convergence theory for the
  celis{\textendash}dennis{\textendash}tapia trust-region algorithm for
  constrained optimization.
\newblock \emph{{SIAM} Journal on Numerical Analysis}, 28\penalty0
  (1):\penalty0 266--290, 1991.

\bibitem[Gould et~al.(2014)Gould, Orban, and Toint]{Gould2014CUTEst}
N.~I.~M. Gould, D.~Orban, and P.~L. Toint.
\newblock {CUTEst}: a constrained and unconstrained testing environment with
  safe threads for mathematical optimization.
\newblock \emph{Computational Optimization and Applications}, 60\penalty0
  (3):\penalty0 545--557, 2014.

\bibitem[Johnson and Zhang(2013)]{Johnson2013Accelerating}
R.~Johnson and T.~Zhang.
\newblock Accelerating stochastic gradient descent using predictive variance
  reduction.
\newblock \emph{Advances in neural information processing systems}, 26, 2013.

\bibitem[Na and Mahoney(2022)]{Na2022Asymptotic}
S.~Na and M.~W. Mahoney.
\newblock Asymptotic convergence rate and statistical inference for stochastic
  sequential quadratic programming.
\newblock \emph{arXiv preprint arXiv:2205.13687}, 2022.

\bibitem[Na et~al.(2021)Na, Anitescu, and Kolar]{Na2021Inequality}
S.~Na, M.~Anitescu, and M.~Kolar.
\newblock Inequality constrained stochastic nonlinear optimization via
  active-set sequential quadratic programming.
\newblock \emph{arXiv preprint arXiv:2109.11502}, 2021.

\bibitem[Na et~al.(2022{\natexlab{a}})Na, Anitescu, and Kolar]{Na2022adaptive}
S.~Na, M.~Anitescu, and M.~Kolar.
\newblock An adaptive stochastic sequential quadratic programming with
  differentiable exact augmented lagrangians.
\newblock \emph{Mathematical Programming}, 2022{\natexlab{a}}.

\bibitem[Na et~al.(2022{\natexlab{b}})Na, Dereziński, and
  Mahoney]{Na2022Hessian}
S.~Na, M.~Dereziński, and M.~W. Mahoney.
\newblock Hessian averaging in stochastic {N}ewton methods achieves superlinear
  convergence.
\newblock \emph{arXiv preprint arXiv:2204.09266}, 2022{\natexlab{b}}.

\bibitem[Nocedal and Wright(2006)]{Nocedal2006Numerical}
J.~Nocedal and S.~Wright.
\newblock \emph{Numerical Optimization}.
\newblock Springer New York, 2006.

\bibitem[Omojokun(1989)]{Omojokun1989Trust}
E.~O. Omojokun.
\newblock \emph{Trust region algorithms for optimization with nonlinear
  equality and inequality constraints}.
\newblock PhD thesis, University of Colorado, Boulder, CO, 1989.

\bibitem[Powell and Yuan(1990)]{Powell1990trust}
M.~J.~D. Powell and Y.~Yuan.
\newblock A trust region algorithm for equality constrained optimization.
\newblock \emph{Mathematical Programming}, 49\penalty0 (1-3):\penalty0
  189--211, 1990.

\bibitem[Rees et~al.(2010)Rees, Dollar, and Wathen]{Rees2010Optimal}
T.~Rees, H.~S. Dollar, and A.~J. Wathen.
\newblock Optimal solvers for {PDE}-constrained optimization.
\newblock \emph{{SIAM} Journal on Scientific Computing}, 32\penalty0
  (1):\penalty0 271--298, 2010.

\bibitem[Robbins and Siegmund(1971)]{Robbins1971Convergence}
H.~Robbins and D.~Siegmund.
\newblock A convergence theorem for non negative almost supermartingales and
  some applications.
\newblock In \emph{Optimizing Methods in Statistics}, pages 233--257. Elsevier,
  1971.

\bibitem[Stich(2019)]{Stich2019Unified}
S.~U. Stich.
\newblock Unified optimal analysis of the (stochastic) gradient method.
\newblock \emph{arXiv preprint arXiv:1907.04232}, 2019.

\bibitem[Sun and Nocedal(2023)]{Sun2023trust}
S.~Sun and J.~Nocedal.
\newblock A trust region method for noisy unconstrained optimization.
\newblock \emph{Mathematical Programming}, pages 1--28, 2023.

\bibitem[Vardi(1985)]{Vardi1985Trust}
A.~Vardi.
\newblock A trust region algorithm for equality constrained minimization:
  Convergence properties and implementation.
\newblock \emph{{SIAM} Journal on Numerical Analysis}, 22\penalty0
  (3):\penalty0 575--591, 1985.

\bibitem[Vaswani et~al.(2019)Vaswani, Bach, and Schmidt]{Vaswani2019Fast}
S.~Vaswani, F.~Bach, and M.~Schmidt.
\newblock Fast and faster convergence of sgd for over-parameterized models and
  an accelerated perceptron.
\newblock In \emph{The 22nd international conference on artificial intelligence
  and statistics}, pages 1195--1204. PMLR, 2019.

\bibitem[Wächter and Biegler(2005)]{Waechter2005implementation}
A.~Wächter and L.~T. Biegler.
\newblock On the implementation of an interior-point filter line-search
  algorithm for large-scale nonlinear programming.
\newblock \emph{Mathematical Programming}, 106\penalty0 (1):\penalty0 25--57,
  2005.

\bibitem[Yuan(1990)]{Yuan1990subproblem}
Y.~Yuan.
\newblock On a subproblem of trust region algorithms for constrained
  optimization.
\newblock \emph{Mathematical Programming}, 47\penalty0 (1-3):\penalty0 53--63,
  1990.

\bibitem[Yuan(1991)]{Yuan1991dual}
Y.~Yuan.
\newblock A dual algorithm for minimizing a quadratic function with two
  quadratic constraints.
\newblock \emph{Journal of Computational mathematics}, pages 348--359, 1991.

\bibitem[Zhang(1992)]{Zhang1992Computing}
Y.~Zhang.
\newblock Computing a celis-dennis-tapia trust-region step for equality
  constrained optimization.
\newblock \emph{Mathematical Programming}, 55\penalty0 (1-3):\penalty0
  109--124, 1992.

\end{thebibliography}

\appendix
\numberwithin{equation}{section}
\numberwithin{theorem}{section}

\section{Additional Analysis of the Behavior of the Merit \mbox{Parameter}}\label{appendix:A}

In the appendix, we further investigate the stability behavior of the merit parameter when using the alternative two approaches in Remark \ref{rem:3} to decompose the~radius. As mentioned, for both approaches, the global convergence analysis directly follows from Section \ref{subsec:global_conv}. 

We first show that for the method in Remark \ref{rem:3}(i), the merit parameter will~stabilize under Assumption \ref{ass:bound_error}.

\begin{lemma}\label{lemma:predk_2}
Suppose Assumptions \ref{ass:1-1} and \ref{ass:bound_error} hold and the relaxation technique in Remark \ref{rem:3}(i) is employed. Then, there exist a (potentially random) $\barK<\infty$ and~a deterministic constant $\hat{\mu}$, such that $\barmu_k=\barmu_{\barK}\leq \hatmu$, $\forall k>\barK$. 
\end{lemma}

\begin{proof}
Similar to Lemma \ref{lemma:predk}, we only show that there exists a \mbox{deterministic}~threshold $\tilde{\mu}>0$ independent of $k$ such that \eqref{eq:upper_bound_predk} is satisfied as long as $\barmu_k\geq \tilde{\mu}$. Using the same derivation as Lemma \ref{lemma:predk}, we have
\begin{align*}
\text{Pred}_k & \leq -\|\bar{\nabla}_{\bx}\L_k\|\tilde{\Delta}_k+\bargamma_k\|B_k\|\|\bv_k\|\tilde{\Delta}_k+\frac{1}{2}\|B_k\|\tilde{\Delta}_k^2 +\bargamma_k(M_1+\kappa_{\nabla f})\|\bv_k\| \\
&\quad + \frac{1}{2}\bargamma_k\|B_k\|\|\bv_k\|^2 -\barmu_k\bargamma_k\|c_k\|\\
& \leq -\|\bar{\nabla}_{\bx}\L_k\|\Delta_k+\bargamma_k\|\bv_k\|\|\bar{\nabla}_{\bx}\L_k\|+\bargamma_k\|B_k\|\|\bv_k\|\tilde{\Delta}_k+\frac{1}{2}\|B_k\|\tilde{\Delta}_k^2\\
& \quad  +\bargamma_k(M_1+\kappa_{\nabla f})\|\bv_k\| + \frac{1}{2}\bargamma_k\|B_k\|\|\bv_k\|^2 -\barmu_k\bargamma_k\|c_k\| \quad(\text{since }\tilde{\Delta}_k\geq \Delta_k-\bargamma_k\|\bv_k\|)\\
& = -\|\bar{\nabla}_{\bx}\L_k\|\Delta_k-\|c_k\|\Delta_k+\|c_k\|\Delta_k+\bargamma_k\|\bv_k\|\|\bar{\nabla}_{\bx}\L_k\|+\bargamma_k\|B_k\|\|\bv_k\|\tilde{\Delta}_k\\
& \quad  +\frac{1}{2}\|B_k\|\tilde{\Delta}_k^2+\bargamma_k(M_1+\kappa_{\nabla f})\|\bv_k\| + \frac{1}{2}\bargamma_k\|B_k\|\|\bv_k\|^2 -\barmu_k\bargamma_k\|c_k\|\\
& \leq -\|\bar{\nabla}\L_k\|\Delta_k+\frac{1}{2}\|B_k\|\Delta_k^2+\|c_k\|\Delta_k+\bargamma_k\|\bv_k\|\|\bar{\nabla}_{\bx}\L_k\|+\bargamma_k\|B_k\|\|\bv_k\|\Delta_k\\
& \quad  +\bargamma_k(M_1+\kappa_{\nabla f})\|\bv_k\| + \frac{1}{2}\bargamma_k\|B_k\|\|\bv_k\|^2 -\barmu_k\bargamma_k\|c_k\|,
\end{align*}
since $\|\bar{\nabla}_{\bx}\L_k\|+\|c_k\|\geq \|\bar{\nabla}\L_k\|$ and $\tilde{\Delta}_k\leq\Delta_k$. Thus, \eqref{eq:upper_bound_predk} holds as long as
\begin{equation*}
 \barmu_k\bargamma_k\|c_k\|\geq    \|c_k\|\Delta_k+\bargamma_k\|\bv_k\|\|\bar{\nabla}_{\bx}\L_k\|+\bargamma_k\|B_k\|\|\bv_k\|\Delta_k +\bargamma_k(M_1+\kappa_{\nabla f})\|\bv_k\| + \frac{\bargamma_k}{2}\|B_k\|\|\bv_k\|^2.
\end{equation*}
Since $\|\bv_k\|\leq \|c_k\|/\sqrt{\kappa_{1,G}}$, $\|\bar{\nabla}_{\bx}\L_k\|\leq \|\nabla_{\bx}\L_k\|+\|\nabla f_k-\barg_k\|\leq \kappa_{\nabla f}+M_1$ and~$ \Delta_k\leq \Delta_{\max}$, it is sufficient to show
\begin{equation*}
\barmu_k\bargamma_k\|c_k\|\geq    \|c_k\|\Delta_k+\bargamma_k\|c_k\|\left(\frac{\kappa_B\Delta_{\max}+2(M_1+\kappa_{\nabla f})}{\sqrt{\kappa_{1,G}}}+\frac{\kappa_B\kappa_c}{2\kappa_{1,G}}\right).
\end{equation*}
Equivalently, 
\begin{equation*}
\barmu_k\geq \frac{\Delta_k}{\bargamma_k}+\left(\frac{\kappa_B\Delta_{\max}+2(M_1+\kappa_{\nabla f})}{\sqrt{\kappa_{1,G}}}+\frac{\kappa_B\kappa_c}{2\kappa_{1,G}}\right).
\end{equation*}
Here, we only consider $\|c_k\|\neq 0$ since the result trivially holds when $\|c_k\|=0$. From \eqref{Ful_GenerateRadius}, we find that 
\begin{equation*}
\frac{\Delta_k}{\bargamma_k}\leq \frac{\eta_{1,k}\alpha_k\|\bar{\nabla}\L_k\|}{\bargamma_k}.
\end{equation*}
By \eqref{equ:4.13}, $\bargamma_k\geq\frac{1}{2}\zeta\phi_k\alpha_k=\frac{1}{2}\zeta\min\{\|B_k\|/\|G_k\|,1\}\alpha_k$. Noting that $\min\{\|B_k\|/\|G_k\|,1\}\geq \min\{1/(\kappa_B\sqrt{\kappa_{2,G}}),1\}$ and $\|\bar{\nabla}\L_k\|\leq \kappa_c+M_1+\kappa_{\nabla f}$, we obtain
\begin{equation*}
\frac{\Delta_k}{\bargamma_k}\leq \frac{2\eta_{\max}}{\zeta}(\kappa_c+\kappa_{\nabla f}+M_1)\cdot \max\{\kappa_B\sqrt{\kappa_{2,G}},1\}.
\end{equation*}
Therefore, \eqref{eq:upper_bound_predk} holds as long as
\begin{multline*}
\barmu_k\geq \tilde{\mu}\coloneqq\frac{2\eta_{\max}}{\zeta}(\kappa_c+\kappa_{\nabla f}+M_1)\cdot \max\{\kappa_B\sqrt{\kappa_{2,G}},1\}
+\left(\frac{\kappa_B\Delta_{\max}+2(M_1+\kappa_{\nabla f})}{\sqrt{\kappa_{1,G}}}+\frac{\kappa_B\kappa_c}{2\kappa_{1,G}}\right).
\end{multline*}
Since $\barmu_k$ is increased by at least a factor of $\rho$ for each update, we define $\hat{\mu}\coloneqq\rho\tilde{\mu}$ and complete the proof.
\end{proof}

We then show that for the method in Remark \ref{rem:3}(ii), the merit parameter will~stabilize just under Assumption \ref{ass:bound_error}(i). However, a tuning parameter $\theta\in(0,1)$ is involved to control the length of the normal step.

\begin{lemma}\label{lemma:predk_3}
Suppose Assumptions \ref{ass:1-1} and \ref{ass:bound_error}(i) hold and the relaxation technique in Remark \ref{rem:3}(ii) is employed. Then, there exist a (potentially random) $\barK<\infty$ and a deterministic constant $\hat{\mu}$, such that $\barmu_k=\barmu_{\barK}\leq \hatmu$, $\forall k>\barK$. 
\end{lemma}

\begin{proof}
Similar to Lemma \ref{lemma:predk}, we only show that there exists a \mbox{deterministic}~threshold $\tilde{\mu}>0$ independent of $k$ such that \eqref{eq:upper_bound_predk} is satisfied as long as $\barmu_k\geq \tilde{\mu}$. Using the same derivation as Lemma \ref{lemma:predk_2}, we only need to show
\begin{equation*}
\barmu_k\geq \frac{\Delta_k}{\bargamma_k}+\left(\frac{\kappa_B\Delta_{\max}+2(M_1+\kappa_{\nabla f})}{\sqrt{\kappa_{1,G}}}+\frac{\kappa_B\kappa_c}{2\kappa_{1,G}}\right)
\end{equation*}
holds for $\barmu_k$ larger than a deterministic threshold for $\|c_k\|\neq 0$. Since for $\forall k\geq 0$, 
\begin{equation*}
\frac{\Delta_k}{\bargamma_k}\leq \frac{\eta_{1,k}\alpha_k\|\bar{\nabla}\L_k\|}{\bargamma_k}.
\end{equation*}
By the projection technique of choosing $\bargamma_k$ and the fact that $\eta_{2,k}\geq \eta_{1,k}/2$, we have
\begin{equation*}
\frac{\breve{\Delta}_k}{\|\bv_k\|} = \frac{\theta\Delta_k}{\|\bv_k\|}\stackrel{\eqref{Ful_GenerateRadius}}{\geq}\frac{\theta\eta_{2,k}\alpha_k\|\bnabla\mL_k\|}{\|\bv_k\|}\geq \frac{\theta\eta_{1,k}\alpha_k\|c_k\|}{2\|\bv_k\|} \stackrel{\eqref{def:eta2k}}{=}\frac{\theta\zeta\alpha_k}{2}.
\end{equation*}
Further, since $\theta\zeta\alpha_k/2\leq 1$, we know $\theta\zeta\alpha_k/2\leq \bargamma_k^{\text{trial}}$, implying $\bargamma_k\geq\theta\zeta\alpha_k/2$. Thus,
\begin{equation*}
\frac{\Delta_k}{\bargamma_k}\leq \frac{2\eta_{\max}}{\zeta\theta}(\kappa_c+\kappa_{\nabla f}+M_1).
\end{equation*}
Therefore, \eqref{eq:upper_bound_predk} holds as long as
\begin{equation*}
\barmu_k\geq \tilde{\mu}\coloneqq\frac{2\eta_{\max}}{\zeta\theta}(\kappa_c+\kappa_{\nabla f}+M_1)
+\left(\frac{\kappa_B\Delta_{\max}+2(M_1+\kappa_{\nabla f})}{\sqrt{\kappa_{1,G}}}+\frac{\kappa_B\kappa_c}{2\kappa_{1,G}}\right).
\end{equation*}
Since $\barmu_k$ is increased by at least a factor of $\rho$ for each update, we define $\hat{\mu}\coloneqq\rho\tilde{\mu}$ and complete the proof.
\end{proof}


\end{document}